\documentclass{article}
\usepackage{mystyle}

\crefname{equation}{}{}

\newcommand{\customref}[2]{\textup{(}\hyperref[#1]{\textup{#2}}\textup{)}}

\title{A convex lifting approach for the Calder\'on problem}
\date{July 1, 2025}

\author{Giovanni S.~Alberti\footnotemark[1]\thanks{Machine Learning Genoa Center (MaLGa), Department of Mathematics, Department of Excellence 2023–2027, University of Genoa, Italy (giovanni.alberti@unige.it, simone.sanna@edu.unige.it).} \and Romain Petit\footnotemark[2]\thanks{CNRS and DMA, ENS, PSL Universit\'e, France (romain.petit@ens.fr).} \and Simone Sanna\footnotemark[1]}

\begin{document}
\maketitle
\begin{abstract}
The Calder\'on problem consists in recovering an unknown coefficient of a partial differential equation from boundary measurements of its solution. These measurements give rise to a highly
nonlinear forward operator. As a consequence, the development of reconstruction methods for this inverse problem is challenging, as they usually suffer from the problem of local convergence. To circumvent this issue, we propose an alternative approach based on lifting and convex relaxation techniques, that have been
successfully developed for solving finite-dimensional quadratic inverse problems. This leads to a convex optimization problem whose solution coincides with the sought-after coefficient, provided that a non-degenerate source condition holds. We demonstrate the validity of our approach on a toy model where the solution of the partial differential equation is known everywhere in the domain. In this simplified setting, we verify that the non-degenerate source condition holds under certain assumptions on the unknown coefficient. We leave the investigation of its validity in the Calder\'on setting for future works.
\end{abstract}

{\bf Keywords:}
inverse problems for PDEs, Calderón problem, lifting, convex relaxation, nuclear norm minimization.\\

\pagenumbering{arabic}

\section{Introduction}

\subsection{Electrical impendance tomography and the Calder\'on problem}\label{sub:eit_calderon}
In many situations of practical interest in imaging sciences, one wishes to estimate some physical quantity inside a medium without being able to measure it directly. This is the case in electrical impedance tomography, where one aims at reconstructing the electrical conductivity of a medium from boundary voltage-current measurements. This reconstruction is especially interesting in medical imaging, where different types of biological tissues can be discriminated by their electrical conductivity.

The formalization of the inverse conductivity problem is due to \citet{calderon_inverse_1980} (see also \cite{uhlmannElectricalImpedanceTomography2009,feldman2019calderon}). Its unknown is modeled by a bounded positive function $\sigma\colon\Omega\to\RR$ on a domain $\Omega$. Applying an electric potential $f$ on the boundary of the domain induces a potential $u$ in $\Omega$ that solves the following conductivity equation
\begin{equation}
    \left\{\begin{aligned}
    &\mathrm{div}(\sigma\nabla u)=0&&\mathrm{in}~\Omega,\\
    &u=f&&\mathrm{on}~\partial\Omega.
    \end{aligned}\right.
    \label{eq_conductivity}
\end{equation}
The electrical current induced at the boundary is then given by $\restriction{\sigma\partial_{\nu}u}{\partial\Omega}$, where $\nu$ denotes the outer unit normal to $\Omega$. The inverse conductivity problem consists in recovering $\sigma$ from the voltage-to-current map (also called Dirichlet-to-Neumann map) $\Lambda_{\sigma}\colon f\mapsto\restriction{\sigma\partial_{\nu}u}{\partial\Omega}$. 

In this work, we focus on a slight modification of the Calder\'on problem. It consists in recovering an unknown coefficient $q$ from the Dirichlet-to-Neumann map $\Lambda_q\colon f\mapsto\restriction{\partial_{\nu}u}{\partial\Omega}$ associated to the following stationary Schrödinger partial differential equation (PDE)
\begin{equation}\left\{
    \begin{aligned}
        -\Delta u +q u&=0&&\mathrm{in}~\Omega,\\
        u&=f&&\mathrm{on}~\partial\Omega.
    \end{aligned}\right.
    \label{eq_gelfand_calderon}
\end{equation}
This problem can be seen as a reduction of the Calder\'on problem through the change of variable $q=\Delta\sqrt{\sigma}/\sqrt{\sigma}$ (see for instance \cite{sylvester1987global}).

Due to the presence of the product $qu$ in \eqref{eq_gelfand_calderon}, the forward map $q\mapsto \Lambda_q$ is \emph{highly nonlinear}. This is a major source of difficulty for solving the inverse problem. It typically leads to reconstruction methods that suffer from local convergence: they are only guaranteed to recover the unknown when initialized in a small neighborhood around it. Another major difficulty stems from the \emph{ill-posedness} of the inverse problem. Since only boundary measurements are available (the Neumann data associated to the Dirichlet data $f$), variations of $q$ away from $\partial\Omega$ result in very small variations of $\Lambda_{q}$.

In this work, we explore a novel approach to the Calder\'on problem with the aim of addressing the issue of local convergence. This approach is based on convex lifting techniques that were introduced for solving quadratic inverse problems with finite-dimensional unknowns. These methods overcome the nonlinearity of the forward map by lifting the original problem to a higher dimensional space. Under suitable assumptions, they lead to a convex optimization problem whose unique solution is the sought-after unknown, which yields globally convergent reconstruction algorithms. Although we do not address the ill-posedness issue in this work, our approach seems amenable to regularization (as in \citet{liSparseSignalRecovery2013,mcraeOptimalConvexLifted2023}), which could lead to improved stability properties.

\subsection{Previous works}
Both formulations of the Calder\'on problem have been extensively studied, with a primary focus on the injectivity of the mappings $\sigma\mapsto\Lambda_\sigma$ and $q\mapsto\Lambda_q$, which ensures the identifiability of the unknown (see for instance \cite{kohn1984determining,sylvester1987global,astala2006calderon,Bukhgeim+2008+19+33,Isakov2017-qr,feldman2019calderon,uhlmannElectricalImpedanceTomography2009,caro2016global}). 
Beyond identifiability, a lot of attention was dedicated to the study of the well-posedness of the problem and to the development of reconstruction algorithms. 

Regarding stability, both problems are known to be severely ill-posed. A weak stability result (namely logarithmic stability) was proved in \cite{alessandrini1988stable} (see also \citet{novikov2010global,santacesaria2013new,Santacesaria+2015+51+73} for the Schr\"odinger equation case in two dimensions) and later shown to be optimal in \cite{mandache2001exponential}. To obtain improved stability estimates, one can make a priori assumptions on the unknown, such as assuming that it belongs to a known finite dimensional subspace. In this setting, stability estimates assuming the availability of infinitely many measurements were obtained for the first time in  \cite{alessandrini2005lipschitz} (see also \cite{bacchelli2006lipschitz, beretta2013lipschitz,beretta2022global} and references therein). More recently, the case of finitely many measurements was investigated in \cite{harrach2019uniqueness,alberti2022infinite,alberti2022inverse}. 

Concerning reconstruction algorithms, nonlinearity makes the convergence of iterative methods heavily dependent on their initialization, see e.g.\ \cite{kaltenbacher-neubauer-scherzer-2008,kindermann-2022,kaltenbacher-2024}. As a result, the iterative methods studied in the aforementioned works are only guaranteed to converge to the unknown under assumptions that are difficult to satisfy in practice (see \cite{lazzaro-morigi-ratti-2024} for a notable exception). In addition to iterative methods, a class of methods called ``D-bar methods'', based on the theoretical works  \cite{nachman1988reconstructions,nachman1996global}, have been proposed in \cite{siltanen2000implementation,knudsen2009regularized,mueller2020d}. Let us also mention a reversion technique based on the Taylor series of the forward map \citep{garde2023series} and a direct method for partial boundary data \citep{hauptmann2017direct}. 

Over the past years, resarch has been devoted to investigate the possibility of expressing the reconstruction problem as a convex optimization problem, as it is common in inverse problems (see e.g.\ \cite{candes2013phaselift,bubba-burger-helin-ratti-2023}). For the convexification of coefficient inverse problems, Carleman estimates have been shown to play a crucial role \citep{klibanov2004carleman,klibanov2013carleman,bellassoued2017carleman}. They lead to the resolution of convex optimization problems and yield globally convergent reconstruction algorithms (see, for instance, \cite{klibanov2017globally,klibanovConvexificationElectricalImpedance2019}). Recently, another reconstruction method based on the resolution of a convex optimization problem has been proposed in \cite{harrachCalderonProblemFinitely2023}. Its remarkable result shows that the sought-after coefficient is the unique solution of a semidefinite programming problem, and is based on monotonicity estimates for the conductivity equation. However, this approach seems numerically challenging \cite{alberti-petit-poon-2025}.

\subsection{Contributions}\label{subsection problem formulation and contributions}
In this work, we introduce a convex lifting approach for solving the Calder\'on problem. Our core idea is to introduce a new unknown $F(x,y)=u(x)q(y)$ and to consider the Poisson equation
\begin{equation*}\label{eq Schrodinger equation}
    \Delta u(x)=F(x,x), \qquad x\in\Omega.
\end{equation*}
With this change of variable, the link between the unknown $F$ and the solution $u$ becomes linear, while the link between the original unknown $q$ and the solution of \Cref{eq_gelfand_calderon} is highly nonlinear. As a result, we recast the recovery of $q$ as the recovery of a rank-one function of $2d$ variables satisfying a set of linear constraints. Then, by using the nuclear norm as a convex proxy for the rank, we propose to produce an estimate of the unknown by solving a convex variational problem depending only on the available measurements.

To prove that this procedure succeeds, we show that nuclear norm minimization techniques, that were developed for solving quadratic inverse problems with finite-dimensional unknowns, can be applied to the recovery of rank-one linear operators between Hilbert spaces. We show that, if a \emph{non-degenerate source condition} holds, the unique solution to the nuclear norm minimization problem is the sought-after coefficient. We also provide robustness guarantees in the presence of measurement noise. We prove that, for a toy inverse problem where internal measurements are available, under suitable assumptions on the unknown, the non-degenerate source condition indeed holds. The investigation of its validity in the case of more general unknowns or boundary measurements is left for future works.

To our knowledge, this approach for solving coefficient inverse problems for partial differential equations is new. It has the potential to provide globally convergent reconstruction algorithms and is amenable to regularization. We also stress that we are not aware of other works on nuclear norm minimization techniques in an infinite-dimensional setting.

\paragraph{Plan of the paper.} In \Cref{sec_nuclear_norm_min}, we provide a brief introduction to nuclear norm minimization techniques for solving quadratic inverse problems with finite-dimensional unknowns. In \Cref{sec_nuc_norm_min_for_rank-one_op,sec_bochner}, we collect some results on the recovery of rank-one linear operators via nuclear norm minimization and on Bochner spaces. In \Cref{sec_calderon}, we introduce our convex lifting approach for solving the Calder\'on problem. We show that a non-degenerate source condition is the key ingredient for proving exact and robust recovery results. Owing to the complexity of the forward operator associated to the Calder\'on problem, proving that this condition holds is challenging. In \Cref{sec_internal_measurements}, we apply our convex lifting approach to a simpler inverse problem where internal measurements are available. In this setting, we show that the non-degenerate source condition holds under suitable assumptions on the unknown. Finally, in \Cref{sec:conclusion} we provide a short summary of the paper and discuss possible future perspectives.

\section{Nuclear norm minimization for quadratic inverse problems in finite dimension}
\label{sec_nuclear_norm_min}
In this section, we review a successful approach to solve a specific class of nonlinear inverse problems, namely quadratic inverse problems. The main interest of this approach is that it does not suffer from the problem of local convergence. It consists in finding a convex optimization problem whose unique solution is the original unknown. Since iterative algorithms for solving convex problems converge to a global minimizer independently of their starting point, this strategy provides a globally convergent reconstruction method.

\subsection{Quadratic inverse problems}
Quadratic inverse problems are arguably the most elementary nonlinear inverse problems. In the simplest case, they consist in recovering an unknown vector $x^\dagger\in\mathbb{K}^n$ ($\mathbb{K}$ being either $\RR$ or $\CC$) from observations $\m=(\m_k)_{1\leq k\leq m}$ of the form $\m_k=\langle V_k x^\dagger,x^\dagger\rangle$ where $V_k$ is a symmetric ($\KK=\RR$) or hermitian ($\KK=\CC$) matrix of size $n\times n$. These problems are indeed nonlinear, as the mapping from $x^\dagger$ to $\m$ is quadratic.

An example of such a problem is phase retrieval \citep{shechtmanPhaseRetrievalApplication2015}. In this case the matrix $V_k$ is chosen to be $V_k=v_k v_k^*$ where $v_k\in\KK^n$ and $v_k^*$ is the Hermitian transpose of $v_k$. This choice yields $\m_k=|\langle v_k,x^\dagger\rangle|^2$. This problem arises from applications in optics, where light is modeled by a complex-valued wave. In this context, recording the phase of the signal is difficult and observations usually consist of intensity measurements only. This leads to the reconstruction of $x^\dagger$ from measurements of the form $\m_k=|\langle v_k,x^\dagger\rangle|^2$ where $(v_k)_{1\leq k\leq m}$ is a set of sensing vectors.

In the following, we only discuss the case $\KK=\RR$ for simplicity.

\subsection{Lifting and convex relaxation}
\label{subsec_lifting_relaxation}
Lifting methods for solving quadratic inverse problems were introduced in \cite{chaiArrayImagingUsing2010,candes2013phaselift}. The fundamental observation is that, given $x\in\RR^n$, the quantity $\langle V_k x,x\rangle$ is in fact equal to the Frobenius inner product $\langle V_k,X\rangle$ of $V_k$ and $X\eqdef xx^T$. While its dependence on $x$ is quadratic -- hence nonlinear -- its dependence on $X$ is \emph{linear}. This leads to the observation that the following two problems are equivalent:
	\begin{align}
	&\mathrm{find}~x\in\RR^n &&\text{such that}~~~\text{for every}~k\in\{1,...,m\},~\langle V_k x,x\rangle=\m_k,\label{prob}\\
	&\mathrm{find}~X\in \mathcal{S}_n(\RR)&&\text{such that}~~~X\succeq 0,~\mathrm{rank}(X)=1~~\mathrm{and}~~\text{for every}~k\in\{1,...,m\},~\langle V_k,X\rangle=\m_k,\label{prob_lift}
	\end{align}
where $\mathcal{S}_n(\RR)$ is the set of symmetric $n\times n$ real matrices and $X\succeq 0$ means that $X$ is positive semi-definite. Going from \cref{prob} to \cref{prob_lift} allows us to linearize the constraint on the unknown. This comes at the price of increasing the number of variables and introducing a non-convex rank one constraint.

The key idea introduced in \cite{candes2013phaselift} is to relax the rank one constraint and minimize instead a natural convex proxy for the rank, namely the nuclear norm. The nuclear norm of a matrix $X\in\RR^{n\times n}$, denoted by $\|X\|_*$, is defined as the sum of its singular values. In the case of symmetric semi-definite matrices, it coincides with the trace. As a result, the above-mentioned authors proposed to consider the following optimization problem
\begin{equation}
    \underset{X\in \mathcal{S}_n(\RR)}{\mathrm{min}}~\|X\|_*~~\mathrm{s.t.}~~X\succeq 0~~\mathrm{and}~~\text{for every}~k\in\{1,...,m\},~\langle V_k,X\rangle=\m_k.
    \label{prob_relaxed_tmp}
\end{equation}
This problem is convex and hence can  be solved by iterative algorithms with global convergence guarantees. It is in fact a semi-definite programming problem for which efficient solvers exist. When only a noisy version~$\m^\delta$ of $\m$ is available, it is natural to consider
\begin{equation}
    \underset{X\in \mathcal{S}_n(\RR)}{\mathrm{min}}~\frac{1}{2}\sum\limits_{k=1}^m (\langle V_k,X\rangle-\m^\delta_k)^2+\lambda \|X\|_*~~\mathrm{s.t.}~~X\succeq 0,
    \label{prob_relaxed_noisy_tmp}
\end{equation}
where $\lambda >0$ is a regularization parameter. Two natural questions regarding this convex relaxations arise. Is $X^\dagger=x^\dagger(x^\dagger)^T$ the unique solution to \cref{prob_relaxed_tmp}? Is $X^\dagger=x^\dagger(x^\dagger)^T$ close to every solution to \cref{prob_relaxed_noisy_tmp} when $\|\m^\delta-\m\|$ is small? In the next subsection, we review sufficient conditions that ensure a positive answer to both questions. 

In fact, to be closer to the setting of \Cref{sec_internal_measurements,sec_calderon}, we study a slightly different problem in which the lifted unknown could possibly be non-square and non-symmetric. Namely, we consider the unknown $X^\dagger=\sigma uv^T$ where $u\in\RR^{n_1}$ and $v\in\RR^{n_2}$ are unit vectors and $\sigma>0$. Given a linear measurement operator $\Phi\colon\RR^{n_1\times n_2}\to\RR^m$, we define $\m=\Phi X^\dagger$ and wish to recover $X^\dagger$ by solving
\begin{equation}
    \underset{X\in\RR^{n_1\times n_2}}{\mathrm{min}}~\|X\|_*~~\mathrm{s.t.}~~\Phi X=\m.
    \tag{$\mathcal{P}$}
    \label{primal}
\end{equation}
When only a noisy version $\m^\delta$ of $\m$ is available, we solve instead
\begin{equation}
    \underset{X\in\RR^{n_1\times n_2}}{\mathrm{min}}~\frac{1}{2}\|\Phi X-\m^\delta\|^2+\lambda\|X\|_*
    \tag{$\mathcal{P}^\delta_{\lambda}$}
    \label{primal_noisy}
\end{equation}
with $\lambda >0$. In the next subsection, we review sufficient conditions under which we have the following two properties.
\begin{enumerate}
    \item Exact recovery: the unique solution to \cref{primal} is $X^\dagger$.
    \item Robust recovery: every solution to \cref{primal_noisy} is close to $X^\dagger$ when $\m^\delta$ is close to $\m$.
\end{enumerate}
As we provide the proofs of these results in the more general setting of recovering rank-one linear operators between Hilbert spaces in \Cref{sec_nuc_norm_min_for_rank-one_op}, we state all the results of the next subsection without proof. We refer the reader to \citet[Section 3]{Candes2009-fw} and \citet[Section 5]{vaiterLowComplexityRegularizations2014} for more details on duality arguments and exact recovery results, and to \citet[Section 6]{vaiterLowComplexityRegularizations2014} for noise robustness results.

\subsection{Exact and robust recovery}
\label{subsec_recovery_nuc_norm_min}
As it is often the case in convex optimization, relevant information about the solutions of \cref{primal} can be obtained by considering its dual problem. This problem is given by 
\begin{equation}
    \underset{p\in \RR^m}{\mathrm{sup}}~\langle p,\m\rangle~~\mathrm{s.t.}~~\|\Phi^*p\|\leq 1,
    \tag{$\mathcal{D}$}
    \label{dual}
\end{equation}
where $\|\cdot\|$ denotes the operator norm of matrices, equal to the largest singular value. This norm is dual to the nuclear norm, that is to say
\begin{equation*}
    \|X\|_*=\mathrm{sup}\,\{\langle X,Y\rangle\,\rvert\,Y\in\RR^{n_1\times n_2},~\|Y\|\leq 1\},\qquad X\in\RR^{n_1\times n_2}.
\end{equation*}

Strong duality holds between \cref{primal} and \cref{dual} in the sense of the following proposition.
\begin{proposition}\label{proposition strong duality finite dimension}
    Both \cref{primal} and \cref{dual} have solutions and their values are equal. Moreover, if $p$ is a solution to \cref{dual} and $H=\Phi^*p$, then every solution $X$ to \cref{primal} satisfies 
    \begin{equation}
        \langle X,H\rangle=\|X\|_*.
        \label{opt_primal_dual}
    \end{equation}
    Conversely, if $\Phi X=\m$, \cref{opt_primal_dual} holds with $H=\Phi^*p$ and $\|H\|\leq 1$ then $X$ and $p$ are respectively solutions to \cref{primal} and \cref{dual}.
\end{proposition}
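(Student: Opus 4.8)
The plan is to read \Cref{proposition strong duality finite dimension} as a textbook instance of Fenchel--Rockafellar duality, splitting the argument into three pieces: elementary weak duality together with attainment, the (only genuinely non-trivial) absence of a duality gap, and the optimality conditions obtained by squeezing the weak-duality inequalities.

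First I would dispatch weak duality and attainment. If $X$ is feasible for \eqref{primal} and $p$ is feasible for \eqref{dual}, then, using that the operator norm is dual to the nuclear norm,
$\langle p,\m\rangle=\langle p,\Phi X\rangle=\langle\Phi^*p,X\rangle\le\|\Phi^*p\|\,\|X\|_*\le\|X\|_*$,
so $\mathrm{val}(\mathcal{D})\le\mathrm{val}(\mathcal{P})$. The primal value is attained because $\|\cdot\|_*$ is a norm, hence coercive, and $\{X:\Phi X=\m\}$ is a nonempty (it contains $X^\dagger$) closed affine subspace, so $\|\cdot\|_*$ attains its minimum on it. For the dual, one may restrict to $p\in(\ker\Phi^*)^\perp=\mathrm{range}\,\Phi$: since $\m\in\mathrm{range}\,\Phi$ the objective $\langle p,\m\rangle$ is unchanged, while $\{p\in\mathrm{range}\,\Phi:\|\Phi^*p\|\le1\}$ is compact, so the supremum is attained.

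The core step is the absence of a duality gap, and I expect this to be the only place where real work is needed; everything else is bookkeeping. I would write \eqref{primal} as $\min_X f(X)+g(\Phi X)$ with $f=\|\cdot\|_*$ and $g=\iota_{\{\m\}}$, and invoke the finite-dimensional Fenchel duality theorem: the constraint qualification holds because $\mathrm{dom}\,f$ is the whole space and $\m\in\mathrm{range}\,\Phi$, so $\mathrm{ri}(\mathrm{dom}\,g)\cap\Phi(\mathrm{ri}(\mathrm{dom}\,f))\ne\emptyset$. This gives $\mathrm{val}(\mathcal{P})=\max_p\{-f^*(\Phi^*p)-g^*(-p)\}$, and computing conjugates — $f^*=\iota_{\{\|\cdot\|\le1\}}$, since the conjugate of a norm is the indicator of the dual unit ball, and $g^*(-p)=-\langle p,\m\rangle$ — the right-hand side is precisely $\sup\{\langle p,\m\rangle:\|\Phi^*p\|\le1\}$, i.e.\ \eqref{dual}. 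Alternatively, and this is the route that will anyway be needed for the Hilbert-space generalization in \Cref{sec_nuc_norm_min_for_rank-one_op}, one closes the gap directly by a Hahn--Banach separation argument applied to the epigraph of the value function.

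The optimality conditions then follow by inserting equality into the weak-duality chain. If $p$ solves \eqref{dual}, $H=\Phi^*p$, and $X$ solves \eqref{primal}, strong duality gives $\|X\|_*=\langle p,\m\rangle=\langle p,\Phi X\rangle=\langle H,X\rangle\le\|H\|\,\|X\|_*\le\|X\|_*$, so every inequality is an equality and \eqref{opt_primal_dual} holds. Conversely, if $\Phi X=\m$ and \eqref{opt_primal_dual} holds with $H=\Phi^*p$ and $\|H\|\le1$, then $X$ is primal-feasible, $p$ is dual-feasible, and $\langle p,\m\rangle=\langle\Phi^*p,X\rangle=\langle H,X\rangle=\|X\|_*\ge\mathrm{val}(\mathcal{P})\ge\mathrm{val}(\mathcal{D})\ge\langle p,\m\rangle$, forcing equality throughout; hence $\|X\|_*=\mathrm{val}(\mathcal{P})$ and $\langle p,\m\rangle=\mathrm{val}(\mathcal{D})$, i.e.\ $X$ and $p$ are optimal for \eqref{primal} and \eqref{dual} respectively.
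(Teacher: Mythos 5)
Your proof is correct, and it reaches the same conclusions, but the route through Fenchel--Rockafellar duality is oriented the opposite way from the paper's. You decompose the \emph{primal} as $\min_X f(X)+g(\Phi X)$ with $f=\|\cdot\|_*$ and $g=\iota_{\{\m\}}$ and invoke the relative-interior constraint qualification (possible here because $\m\in\operatorname{range}\Phi$). The paper's proof of the Hilbert-space analogue, Proposition~\ref{prop_strong_dual_op}, instead rewrites the \emph{dual} as $\inf_p\mathcal{F}(p)+\mathcal{G}(\Phi^*p)$ with $\mathcal{F}=-\langle\cdot,\m\rangle$ and $\mathcal{G}=\iota_{\{\|\cdot\|\le1\}}$, and applies Ekeland--Temam's Fenchel theorem to \emph{that} problem; its qualification is simply that $\mathcal{G}$ is finite and continuous at $0=\Phi^*0$, because the operator-norm ball has nonempty interior. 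The upshot: the paper's orientation avoids relative interiors entirely (they are unavailable in infinite dimensions, where $\{\m\}$ has empty interior), and it delivers \emph{primal} attainment as the bonus of the theorem, not dual attainment. Accordingly the operator-level Proposition~\ref{prop_strong_dual_op} does not even claim dual attainment, whereas the finite-dimensional statement you were asked to prove does; your separate compactness argument after restricting $p$ to $(\ker\Phi^*)^\perp=\operatorname{range}\Phi$ is precisely the extra ingredient that the finite-dimensional claim needs, and it is a nice addition. One small inaccuracy in your discussion: you suggest the infinite-dimensional generalization will require a Hahn--Banach separation argument, but the paper does not do that; it closes the gap with the same Fenchel duality theorem, just applied to the dual problem so that the qualification lands on a set with nonempty interior. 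Otherwise your weak-duality squeeze for the optimality conditions matches the paper's invocation of Ekeland--Temam Proposition~III.4.1.
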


As a result, to ensure that $X^\dagger$ is a solution to \cref{primal}, it is sufficient to show the existence of $H\in\mathrm{Im}(\Phi^*)$ such that $\|H\|\leq 1$ and $\langle X^\dagger,H\rangle =\|X^\dagger\|_*$. This property is known as \emph{source condition} (see e.g.\ \cite{engl-hanke-neubauer-1996,benning-bubba-ratti-riccio-2024}). If such an $H$ exists, it is called a \emph{dual certificate}, as it is directly linked to a solution of the dual problem \cref{dual} and guarantees the optimality of $X^\dagger$ for \cref{primal}. In fact, the set $\{H\in\RR^{n_1\times n_2}\,\rvert\, \|H\|\leq 1,~\langle X,H\rangle=\|X\|_*\}$ is nothing but the subdifferential of the nuclear norm at $X$, denoted by $\partial\|\cdot\|_*(X)$ (see \citet[Section 23]{rockafellarConvexAnalysis2015} for a precise definition and \cite{watsonCharacterizationSubdifferentialMatrix1992} for the special case of the nuclear norm).

As we are interested in certifying the optimality of $X^\dagger=\sigma uv^T$ for \cref{primal}, we give another useful characterization of $\partial\|\cdot\|_*(X^\dagger)$  below.
\begin{lemma}
    Let $H\in\RR^{n_1\times n_2}$. The following conditions are equivalent.
    \begin{enumerate}
        \item It holds that $\|H\|\leq 1$ and $\langle X^\dagger,H\rangle=\|X^\dagger\|_*$.
        \item There exists $W\in\RR^{n_1\times n_2}$ such that $H=uv^T+W$ with $Wv=0$, $W^Tu=0$ and $\|W\|\leq 1$.
    \end{enumerate}
    \label{lemma_equiv_subdiff}
\end{lemma}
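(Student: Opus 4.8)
The plan is to prove the two implications separately, using $\|X^\dagger\|_* = \sigma$ (the matrix $uv^T$ is rank one with $\|uv^T\|_F = \|u\|\|v\| = 1$), the identity $\langle X^\dagger, H\rangle = \sigma\, u^T H v$, and the variational characterization $\|H\| = \sup\{a^T H b : \|a\|=\|b\|=1\}$ of the operator norm.

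First I would prove \textbf{(2) $\Rightarrow$ (1)}. Given $H = uv^T + W$ with $Wv = 0$, $W^T u = 0$ and $\|W\|\le 1$, one has $\langle X^\dagger, H\rangle = \sigma\big(u^T(uv^T)v + u^T W v\big) = \sigma$, since $u^T(uv^T)v = \|u\|^2\|v\|^2 = 1$ and $u^T W v = (W^T u)^T v = 0$. To bound $\|H\|$, take unit vectors $a\in\RR^{n_1}$, $b\in\RR^{n_2}$ and split $a = \alpha u + a'$, $b = \beta v + b'$ with $a'\perp u$, $b'\perp v$, so that $\|a'\| = \sqrt{1-\alpha^2}$ and $\|b'\| = \sqrt{1-\beta^2}$ with $|\alpha|,|\beta|\le 1$. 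Using $u^T W = 0$ and $Wv = 0$, every cross term drops and $a^T W b = a'^T W b'$, hence $a^T H b = \alpha\beta + a'^T W b'$ and
\begin{equation*}
    |a^T H b| \le |\alpha\beta| + \|W\|\,\|a'\|\,\|b'\| \le |\alpha|\,|\beta| + \sqrt{1-\alpha^2}\sqrt{1-\beta^2} \le 1,
\end{equation*}
the last inequality being Cauchy–Schwarz applied to the unit vectors $(|\alpha|,\sqrt{1-\alpha^2})$ and $(|\beta|,\sqrt{1-\beta^2})$. Therefore $\|H\|\le 1$.

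For \textbf{(1) $\Rightarrow$ (2)}, I would set $W \eqdef H - uv^T$ and check the three conditions. The equality $\langle X^\dagger, H\rangle = \|X^\dagger\|_* = \sigma$ yields $u^T H v = 1$; together with $1 = u^T H v \le \|u\|\,\|Hv\| \le \|H\| \le 1$, the equality case of Cauchy–Schwarz forces $Hv$ to be a nonnegative multiple of $u$, and then $u^T H v = 1$ pins it down to $Hv = u$. The same argument applied to $H^T$ gives $H^T u = v$. Consequently $Wv = Hv - u(v^Tv) = u - u = 0$ and $W^T u = H^T u - v(u^Tu) = v - v = 0$. Finally, for unit vectors $a, b$, decompose $b = \beta v + b'$ with $b'\perp v$: since $Wv = 0$ and $uv^Tb' = 0$, we get $Wb = Wb' = Hb'$; decomposing $a = \alpha u + a'$ with $a'\perp u$ and using $u^T H b' = (H^T u)^T b' = v^T b' = 0$, we obtain $a^T W b = a'^T H b' \le \|H\|\,\|a'\|\,\|b'\| \le 1$, so $\|W\|\le 1$.

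All the computations are elementary manipulations with orthogonal decompositions; the only step requiring a little care is the extraction of $Hv = u$ and $H^T u = v$ from the scalar equality constraint, which hinges on correctly identifying the equality case in the Cauchy–Schwarz inequality and exploiting that $u$ and $v$ are unit vectors. Once these two vector identities are in hand, the remaining verifications are routine bookkeeping.
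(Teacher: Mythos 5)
Your proof is correct and follows essentially the same route as the paper's proof of the infinite-dimensional version (Lemma~\ref{lemma characterization subdiff at F}), which cycles through four equivalent conditions. The two key ingredients are identical: you extract $Hv=u$ and $H^Tu=v$ from $u^THv=1$ together with $\|H\|\le1$ via the equality case of Cauchy--Schwarz, and you bound the operator norm by decomposing arbitrary unit vectors $a,b$ along $u,v$ and their orthogonal complements; the only cosmetic difference is that you skip the paper's intermediate projection characterization ($P_T(H)=u\otimes v$, $\|P_{T^\perp}H\|\le1$) and work directly with $W=H-uv^T$.
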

The source condition (the existence of a dual certificate for $X^\dagger$) only ensures that $X^\dagger$ is \emph{a} solution to \cref{primal}. If one is interested in $X^\dagger$ being the \emph{unique} solution to \cref{primal}, the source condition can be strengthened to require the existence of a \emph{non-degenerate} dual certificate.
\begin{definition}
    We say that the non-degenerate source condition holds if
    \begin{equation}
        \mathrm{there~exists}~H\in\mathrm{Im}(\Phi^*)~\mathrm{s.t.}~H=uv^T+W~~\mathrm{with}~Wv=0,~W^Tu=0~\mathrm{and}~\|W\|<1.
        \label{ndsc finite}
        \tag{NDSC}
    \end{equation}
\end{definition}
The non-degenerate source condition simply amounts to requiring a \emph{strict} inequality in condition 2.\ of \Cref{lemma_equiv_subdiff}. Its main interest is that it allows us to obtain exact recovery guarantees, as stated in the following proposition.
\begin{proposition}
    Assume that $\m\neq 0$ and that the non-degenerate source condition holds. Then $X^\dagger=\sigma uv^T$ is the unique solution to~\cref{primal}.
    \label{prop_exact_recovery}
\end{proposition}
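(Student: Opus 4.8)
The plan is to combine the strong duality characterization (\Cref{proposition strong duality finite dimension}) with a standard convex-analysis argument exploiting the \emph{strict} inequality $\|W\|<1$ in \customref{ndsc finite}{NDSC}. First I would observe that, since the dual certificate $H=\Phi^*p$ satisfies $H\in\partial\|\cdot\|_*(X^\dagger)$ (by \Cref{lemma_equiv_subdiff}, as the conditions with $\|W\|<1$ trivially imply those with $\|W\|\le 1$) and $X^\dagger$ is feasible ($\Phi X^\dagger=\m$), the converse direction of \Cref{proposition strong duality finite dimension} already gives that $X^\dagger$ \emph{is} a solution to \customref{primal}{$\mathcal{P}$}. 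So the entire content of the proposition is the \emph{uniqueness}.

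For uniqueness, let $X$ be any solution to \customref{primal}{$\mathcal{P}$}. By \Cref{proposition strong duality finite dimension} it satisfies $\langle X,H\rangle=\|X\|_*$, and of course $\Phi X=\m=\Phi X^\dagger$, hence $X-X^\dagger\in\ker\Phi$. I would decompose $X$ using the orthogonal projections onto the row/column spaces of $X^\dagger$: write $\mathcal{T}$ for the subspace $\{uy^T+zv^T : y\in\RR^{n_2},\,z\in\RR^{n_1}\}$ (the tangent space to the rank-one manifold at $X^\dagger$), and $\mathcal{T}^\perp=\{Z : Zv=0,\ Z^Tu=0\}$, so that $X=X_{\mathcal T}+X_{\mathcal T^\perp}$. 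The key computation is the pairing $\langle X,H\rangle=\langle X,uv^T\rangle+\langle X,W\rangle$: since $W\in\mathcal{T}^\perp$, $\langle X_{\mathcal T},W\rangle=0$, so $\langle X,W\rangle=\langle X_{\mathcal T^\perp},W\rangle\le\|X_{\mathcal T^\perp}\|_*\,\|W\|<\|X_{\mathcal T^\perp}\|_*$ unless $X_{\mathcal T^\perp}=0$. On the other hand, by the triangle inequality for the nuclear norm and the fact that $\|uv^T\|\le 1$ and $\|W\|<1$, one gets $\|X\|_*\ge \langle X_{\mathcal T},uv^T\rangle+\|X_{\mathcal T^\perp}\|_*$ (using that $uv^T$ and $W$ act on orthogonal pieces — this is the standard ``$\|A+B\|_*=\|A\|_*+\|B\|_*$ when $A,B$ have orthogonal row and column spaces'' fact). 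Chaining these, the equality $\langle X,H\rangle=\|X\|_*$ forces $X_{\mathcal T^\perp}=0$, i.e.\ $X\in\mathcal T$.

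Once $X\in\mathcal T$ and $X-X^\dagger\in\ker\Phi$, I would finish by the usual injectivity-on-the-tangent-space argument: the non-degeneracy is exactly what guarantees $\ker\Phi\cap\mathcal T=\{0\}$, so $X=X^\dagger$. Concretely, suppose $Z\in\ker\Phi\cap\mathcal T$, $Z\ne 0$; consider $X^\dagger+tZ$ for small $t$. It is feasible, and a first-order expansion of the nuclear norm along $\mathcal T$ at $X^\dagger$ (where the nuclear norm is differentiable with gradient $uv^T$, since $X^\dagger=\sigma uv^T$ with $\sigma>0$) gives $\|X^\dagger+tZ\|_* = \|X^\dagger\|_* + t\langle uv^T,Z\rangle + o(t)$; but $\langle uv^T,Z\rangle=\langle H,Z\rangle - \langle W,Z\rangle = \langle \Phi^*p,Z\rangle = \langle p,\Phi Z\rangle = 0$, so $X^\dagger$ is not a strict local minimum along this feasible direction, and by taking $t$ of the sign that also makes the $o(t)$-corrected value strictly less (using strict convexity transverse considerations) one contradicts optimality — more cleanly, one shows directly that for $t$ small of both signs $\|X^\dagger+tZ\|_*<\|X^\dagger\|_*$ is impossible for both, and combines with the subgradient inequality $\|X^\dagger+tZ\|_*\ge \|X^\dagger\|_*+\langle H,tZ\rangle=\|X^\dagger\|_*$ to get $\|X^\dagger+tZ\|_*=\|X^\dagger\|_*$ identically, then rules this out via the strict bound $\|W\|<1$ exactly as in the previous paragraph applied to $X=X^\dagger+tZ$. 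The place to be careful — and what I expect to be the main obstacle — is making the ``orthogonal row/column spaces $\Rightarrow$ nuclear norms add'' step and the tangent-space bookkeeping rigorous; the hypothesis $\m\ne 0$ enters only to guarantee $X^\dagger\ne 0$ (equivalently $\sigma>0$ and $u,v\ne0$) so that $\mathcal T$ and the differentiability statement make sense.
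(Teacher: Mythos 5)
Your first two steps are correct and match the overall philosophy of the paper: $X^\dagger$ is a solution because the dual certificate lies in $\partial\|\cdot\|_*(X^\dagger)$, and the $T/T^\perp$ bookkeeping combined with $\|W\|<1$ correctly yields $P_{T^\perp}(X)=0$ for any solution $X$. However, there is a genuine gap in the last step.

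The claim that the non-degenerate source condition ``guarantees $\ker\Phi\cap\mathcal T=\{0\}$'' is false: \cref{ndsc finite} only asserts the existence of a certificate in $\operatorname{Im}(\Phi^*)$ and says nothing about the restriction of $\Phi$ to $T$. Indeed, take $n_1=n_2=2$, $u=v=e_1$, and $\Phi$ with $\ker\Phi=\operatorname{span}(e_1e_2^T)\subset T$; the matrix $H=\operatorname{diag}(1,w)$ with $|w|<1$ lies in $\operatorname{Im}(\Phi^*)=\{Z:Z_{12}=0\}$ and satisfies \cref{ndsc finite}, yet $\ker\Phi\cap T\neq\{0\}$. (Note that injectivity on $T$ appears only as an \emph{additional} hypothesis in \Cref{prop_robust_recovery}.) The first-order-expansion patch also does not work: by convexity the nuclear norm cannot decrease along a feasible direction with vanishing directional derivative, so the ``$o(t)$-corrected value strictly less'' step is not available, and the subgradient inequality alone gives only $\geq$, not the equality you assert.

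What actually closes the argument is to exploit $\langle X,H\rangle=\|X\|_*$ \emph{once more}, after knowing $X\in T$. Writing $X=\alpha uv^T+ua^T+bv^T$ with $a\perp v$, $b\perp u$, the conditions $Wv=0$, $W^Tu=0$ give $\langle X,H\rangle=\alpha$, whereas $\|X\|_*\ge\|Xv\|_2=\sqrt{\alpha^2+\|b\|^2}$ and $\|X\|_*\ge\|X^Tu\|_2=\sqrt{\alpha^2+\|a\|^2}$; equating forces $a=b=0$ and $\alpha\ge 0$, i.e.\ $X=\alpha uv^T$. The paper reaches the same conclusion more directly: \Cref{lemma techical lemma for showing F unique sol}, via \Cref{lemma techical lemma on Hilbert spaces}, shows that the strict inequality in \cref{ndsc finite} and $\langle X,H\rangle=\|X\|_*$ already force \emph{every} singular pair of $X$ to equal $\pm(u,v)$, so that $X=\alpha uv^T$ with $\alpha\ge 0$ --- bypassing the $T/T^\perp$ decomposition entirely. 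Finally, the hypothesis $\m\neq 0$ does more than ensure $X^\dagger\neq 0$: it guarantees $\Phi(uv^T)\neq 0$, hence $\Phi$ is injective on the cone $\{\alpha uv^T:\alpha\ge 0\}$, which is precisely what pins down $\alpha=\sigma$ from $\Phi(\alpha uv^T)=\m=\Phi(\sigma uv^T)$.
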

Let us stress that the assumptions of \Cref{prop_exact_recovery} are sufficient but not necessary for exact recovery. A sufficient and necessary condition can be found in \citet[Corollary 4.8]{fadiliSolutionUniquenessConvex2024}. As we have not been able to leverage this weaker type of condition in \Cref{sec_internal_measurements,sec_calderon}, we do not discuss it here.

To obtain robust recovery guarantees in the presence of measurement noise, the non-degenerate source condition can be complemented with an injectivity condition on the model tangent subspace $T$ defined by
\begin{equation}\label{eq_T_finite}
T\eqdef \{u\otimes a+b\otimes v:(a,b)\in \RR^{n_2}\times \RR^{n_1}\}.
\end{equation}
This is the object of the following proposition, in which $\|\cdot\|_F$ denotes the Frobenius (or Hilbert-Schmidt) norm of matrices.
\begin{proposition}
    Let $\delta$ and $c$ be two positive constants. If the non-degenerate source condition \cref{ndsc finite} holds and $\Phi$ is injective on $T$ then, for any minimizer $X^\delta$ of \cref{primal_noisy} with $\lambda=c\delta$ and $\m^\delta\in\RR^m$ such that $\|\m^\delta-\m\|\leq \delta$, we have that $\|X^\delta-X^\dagger\|_F=\mathcal{O}(\delta)$.
    \label{prop_robust_recovery}
\end{proposition}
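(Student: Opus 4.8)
The plan is to adapt the standard noise-robustness argument for low-complexity priors (see \citet[Section 6]{vaiterLowComplexityRegularizations2014}), exploiting that the lifted unknown $X^\dagger=\sigma uv^T$ lives in the finite-dimensional space $\RR^{n_1\times n_2}$ and that $\m,\m^\delta\in\RR^m$. Fix a minimizer $X^\delta$ of \cref{primal_noisy} with $\lambda=c\delta$ (one exists, as the objective is continuous and coercive), set $D\eqdef X^\delta-X^\dagger$, and split $D=P_TD+P_{T^\perp}D$, where $P_T$ is the orthogonal projection onto the model tangent subspace $T$ of \cref{eq_T_finite}; concretely $P_TZ=uu^TZ+Zvv^T-uu^TZvv^T$ and $P_{T^\perp}Z=(I-uu^T)Z(I-vv^T)$. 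The goal is to bound $\|P_{T^\perp}D\|_F$ and $\|P_TD\|_F$ separately by $\mathcal O(\delta)$.

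First I would record the energy comparison coming from optimality of $X^\delta$: since $\Phi X^\dagger=\m$ and $\|\m^\delta-\m\|\leq\delta$,
\begin{equation*}
\frac12\|\Phi X^\delta-\m^\delta\|^2+\lambda\|X^\delta\|_*\leq\frac12\|\m-\m^\delta\|^2+\lambda\|X^\dagger\|_*\leq\frac12\delta^2+\lambda\|X^\dagger\|_*.
\end{equation*}
To use this I would pick a subgradient of $\|\cdot\|_*$ at $X^\dagger$ adapted to $D$: taking an SVD $P_{T^\perp}D=\sum_i s_i a_i b_i^T$ (so that automatically $a_i\perp u$ and $b_i\perp v$) and setting $W_0\eqdef\sum_i a_i b_i^T$, one checks $W_0v=0$, $W_0^Tu=0$, $\|W_0\|\leq1$ and $\langle W_0,D\rangle=\langle W_0,P_{T^\perp}D\rangle=\|P_{T^\perp}D\|_*$; by \Cref{lemma_equiv_subdiff}, $uv^T+W_0\in\partial\|\cdot\|_*(X^\dagger)$, hence $\|X^\delta\|_*\geq\|X^\dagger\|_*+\langle uv^T,D\rangle+\|P_{T^\perp}D\|_*$. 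Combined with the energy comparison this gives
\begin{equation*}
\frac12\|\Phi X^\delta-\m^\delta\|^2+\lambda\langle uv^T,D\rangle+\lambda\|P_{T^\perp}D\|_*\leq\frac12\delta^2.
\end{equation*}

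Next I would bring in the non-degenerate dual certificate $H=\Phi^*p=uv^T+W$ with $\|W\|<1$ to control $\langle uv^T,D\rangle$. Writing $\langle uv^T,D\rangle=\langle p,\Phi D\rangle-\langle W,P_{T^\perp}D\rangle$ (using $W\in T^\perp$) and estimating $|\langle p,\Phi D\rangle|\leq\|p\|(\|\Phi X^\delta-\m^\delta\|+\delta)$ and $|\langle W,P_{T^\perp}D\rangle|\leq\|W\|\,\|P_{T^\perp}D\|_*$, the previous inequality becomes, with $\lambda=c\delta$ and $r\eqdef\|\Phi X^\delta-\m^\delta\|$,
\begin{equation*}
\frac12 r^2-c\|p\|\,\delta\,r+c(1-\|W\|)\,\delta\,\|P_{T^\perp}D\|_*\leq\frac12\delta^2+c\|p\|\,\delta^2.
\end{equation*}
Completing the square in $r$ (so that $\frac12 r^2-c\|p\|\delta r\geq-\frac12 c^2\|p\|^2\delta^2$) and using $1-\|W\|>0$ then yields at once $r=\mathcal O(\delta)$ and $\|P_{T^\perp}D\|_*=\mathcal O(\delta)$; since $\|\cdot\|_F\leq\|\cdot\|_*$, this controls $\|P_{T^\perp}D\|_F$. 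For the tangential part I would combine $\|\Phi D\|\leq r+\delta=\mathcal O(\delta)$ with $\|\Phi(P_{T^\perp}D)\|=\mathcal O(\|P_{T^\perp}D\|_F)=\mathcal O(\delta)$ (boundedness of $\Phi$) to get $\|\Phi(P_TD)\|=\mathcal O(\delta)$; since $\Phi$ is injective on the finite-dimensional subspace $T$, there is $c_T>0$ with $\|\Phi Z\|\geq c_T\|Z\|_F$ for all $Z\in T$, whence $\|P_TD\|_F=\mathcal O(\delta)$. Adding the two estimates gives $\|X^\delta-X^\dagger\|_F=\mathcal O(\delta)$.

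The only genuinely non-routine step is the construction of the subgradient $uv^T+W_0$ aligned with $P_{T^\perp}D$: it uses the explicit description of $\partial\|\cdot\|_*(X^\dagger)$ from \Cref{lemma_equiv_subdiff} together with the fact that any matrix supported in $T^\perp$ pairs, via its SVD, against a norm-one matrix of the same support to its nuclear norm. The remaining ingredients are bookkeeping — the energy comparison, the quadratic estimate in $r$, and the passage from a bound on $\|\Phi(P_TD)\|$ to one on $\|P_TD\|_F$ — the last of which relies crucially on finite-dimensionality of $T$, and is precisely the point that must be handled with care when this argument is lifted to the Hilbert-space setting of \Cref{sec_nuc_norm_min_for_rank-one_op}.
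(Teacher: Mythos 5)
Your proof is correct and follows essentially the same route as the paper (which proves the Hilbert-space generalization, Proposition~\ref{prop_robust_recovery_op}, in Appendix~\ref{appendix_robust_recovery}): energy comparison from optimality, an alignment subgradient $uv^T+W_0$ built from the SVD of $P_{T^\perp}(X^\delta-X^\dagger)$, control of $\langle uv^T,D\rangle$ through the certificate $H=\Phi^*p$, a completion-of-square/Young step, and finally injectivity of $\restriction{\Phi}{T}$ to absorb the tangential component. The only difference is expositional: the paper factors the argument through the Bregman divergence $D_H$ and two intermediate lemmas (\Cref{lemma Bound on P_Tperp with bregman divergence} and \Cref{lemma_bregman_rate_prediction}), whereas you fold those two bounds into a single chain of inequalities, which is mathematically equivalent.
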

The main weakness of \Cref{prop_robust_recovery} is that it does not show that $X^\delta$ is of the form $\tilde{\sigma}\tilde{u}\tilde{v}^T$ with $\tilde{\sigma}$, $\tilde{u}$ and $\tilde{v}$ respectively close to $\sigma$, $u$ and $v$. This stronger robustness property is linked to the notion of \emph{model selection} (see for example \citet[Section 7]{vaiterLowComplexityRegularizations2014} and \cite{vaiterModelConsistencyPartly2018,fadiliModelConsistencyLearning2019}, as well as \cite{bachConsistencyTraceNorm2008} for the special case of nuclear norm minimization). In order to prove that model selection occurs at low noise, a stronger version of the non-degenerate source condition \cref{ndsc finite}, which involves the \emph{minimal norm dual certificate}, has to be considered. As it does not appear straightforward to compute this special dual certificate in the setting of \Cref{sec_internal_measurements,sec_calderon}, we do not discuss model selection properties here.
\section{Nuclear norm minimization for recovering rank-one operators}\label{sec_nuc_norm_min_for_rank-one_op}
In this section, we extend the approach discussed in the previous section to the problem of recovering rank-one operators between Hilbert spaces.

As in \Cref{sec_nuclear_norm_min}, the nuclear norm plays a crucial role in the convex lifting approach we propose in \Cref{sec_internal_measurements,sec_calderon}. Because we work in an infinite-dimensional setting (our unknowns are functions while the unknowns of \Cref{sec_nuclear_norm_min} are finite-dimensional vectors), we need to rely on the notion of nuclear norm of linear operators.

\subsection{The nuclear norm of linear operators}
 In this subsection, we recall the definition of the nuclear norm of linear operators and collect some useful properties. We refer the reader to \cite{reed1972methods,conway2000course} for more details on this topic.

\paragraph{Definition.} Let $\calH_1,\calH_2$ be separable real Hilbert spaces. We denote  the space of compact linear operators from $\calH_1$ to $\calH_2$ by $\mathcal{B}_0(\calH_1;\calH_2)$. If $G\in\mathcal{B}_0(\calH_1;\calH_2)$, its singular value decomposition (see \citet[Chapter 1]{Simon2005-rp}) gives rise to a non-increasing sequence of non-negative singular values $(\sigma_i(G))_{i\in\NN}$. The nuclear norm of $G$ is defined as $\|G\|_*\eqdef\sum_{i\in\NN} \sigma_i(G)\in\RR_{\geq 0}\cup\{+\infty\}$. We denote  the space of compact operators with finite nuclear norm by $\calB_1(\calH_1;\calH_2)$.

\paragraph{The nuclear norm on $\calB_2(\calH_1;\calH_2)$.} In the following, in order to obtain minimization problems over a Hilbert space, we rather minimize the nuclear norm on the space of compact operators with square-summable singular values, denoted by $\calB_2(\calH_1;\calH_2)$. Equipped with the inner product 
\begin{equation}\label{definition inner product B2}
\langle G,H\rangle \eqdef \sum_{i\in\NN}\langle Ge_i, He_i\rangle_{\calH_2},
\end{equation}
where $\{e_i\}_{i\in\NN}$ is an orthonormal basis for $\calH_1$, it is a Hilbert space and the associated norm is the Hilbert-Schmidt norm $\|\cdot\|^2_{\HS}=\sum_{i\in\NN} \sigma_i(\cdot)^2$. Our choice to minimize the nuclear norm on $\calB_2(\calH_1;\calH_2)$ is justified by the fact that the mapping $\|\cdot\|_*\colon\calB_2(\calH_1;\calH_2)\to\RR\cup\{+\infty\}$ is proper convex and lower semi-continuous. This can be proved by considering the dual formula \eqref{dual_formula_nuclear_norm} below. In the following, unless stated otherwise, we always consider $\|\cdot\|_*$ as a function on $\calB_2(\calH_1;\calH_2)$.

\paragraph{Dual formulation and subdifferential.} As in the finite-dimensional case, a dual formula involving the operator norm $\|\cdot\|$ holds for $\|\cdot\|_*$. Since finite-rank operators are dense in $\calB_0(\calH_1;\calH_2)$, we have that $\calB_2(\calH_1;\calH_2)$ is dense in $\calB_0(\calH_1;\calH_2)$, and so one can show that
\begin{equation}
    \|G\|_*=\mathrm{sup}~\{\langle G,H\rangle\,\rvert\,H\in\mathcal{B}_2(\calH_1;\calH_2),~\|H\|\leq 1\},\qquad G\in\calB_2(\calH_1;\calH_2).
    \label{dual_formula_nuclear_norm}
\end{equation}
As in \Cref{sec_nuclear_norm_min}, the subdifferential of the nuclear norm plays a crucial role in our analysis. Considering \eqref{dual_formula_nuclear_norm}, one can show that it is given by
\begin{equation}\label{expression of subdiff nuc norm at F}
    \partial\|\cdot\|_*(G)=\{H\in\calB_2(\calH_1;\calH_2): \|H\|\leq1, \langle H,G\rangle=\|G\|_*\},\qquad G\in\calB_2(\calH_1;\calH_2).
\end{equation}
In the following, we use the tensor notation to refer to rank-one operators. Namely, given $u\in\calH_1$, $v\in\calH_2$, we denote by $u\otimes v$ the rank-one operator
\begin{equation*}
    u\otimes v:\mathcal{H}_1\ni a\mapsto \langle u,a\rangle_{\calH_1}v\in\calH_2.
\end{equation*}
In the case of a rank-one operator $F=\sigma u\otimes v$, where $\sigma>0$ and $u,v$ are unit vectors in $\calH_1$ and $\calH_2$ respectively, the subdifferential of $\|\cdot\|_*$ at $F$ can be characterized in several ways. To state them, as in \eqref{eq_T_finite}, we introduce the model tangent subspace $T\eqdef\{u\otimes b+a\otimes v:(a,b)\in\calH_1\times\calH_2\}$ and denote the orthogonal projection on $T$ (respectively $T^\perp$) by $P_T$ (respectively $P_{T^\perp}$).
\begin{lemma}\label{lemma characterization subdiff at F}
    Let $H\in\calB_2(\calH_1;\calH_2)$ and $F=\sigma u\otimes v$, where $\sigma>0$ and $u,v$ are unit vectors in $\calH_1$ and $\calH_2$, respectively. The following conditions are equivalent.
    \begin{enumerate}[label=(\roman*)]
        \item It holds that $\|H\|\leq1$ and $\langle H,F\rangle=\|F\|_*$. \label{i}
        \item It holds that $P_T(H)=u\otimes v$ and $\|P_{T^\perp}(H)\|\leq1$. \label{ii}
        \item There exists $W\in\calB_2(\calH_1;\calH_2)$ such that $H=u\otimes v+W$ with $Wu=0$, $W^*v=0$ and $\|W\|\leq1$. \label{iii}
        \item It holds that $Hu=v$, $H^*v=u$ and $|\langle Hu^\perp,v^\perp\rangle_{\calH_2}|\leq1$ for every pair $(u^\perp,v^\perp)\in \calH_1\times\calH_2$ of unit vectors respectively orthogonal to $u$ and $v$. \label{iv}
    \end{enumerate}
\end{lemma}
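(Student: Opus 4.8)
The strategy is to establish the cycle of implications (i) $\Rightarrow$ (ii) $\Rightarrow$ (iii) $\Rightarrow$ (iv) $\Rightarrow$ (i), which is the cleanest way to prove four conditions equivalent. The key preliminary observation is that $T^\perp$ consists exactly of those operators $H$ with $Hu = 0$ and $H^* v = 0$: indeed, $H \perp (a \otimes v)$ for all $a$ means $\langle H^* v, a\rangle = \langle H, a\otimes v\rangle = 0$ for all $a$, so $H^* v = 0$, and symmetrically $\langle H u, b\rangle = \langle H, u \otimes b\rangle = 0$ for all $b$ gives $Hu = 0$. From this one computes $P_T$ explicitly: writing $H = P_T H + P_{T^\perp} H$ and using the orthogonal splittings $\mathcal{H}_1 = \mathbb{R}u \oplus u^\perp$, $\mathcal{H}_2 = \mathbb{R}v \oplus v^\perp$, one finds $P_T(H) = \langle Hu, v\rangle\, u\otimes v + u \otimes P_{v^\perp}(Hu) + P_{u^\perp}(H^* v) \otimes v$ (this is the rank-$\le 2$ ``arrowhead'' part of $H$ along the $u,v$ directions). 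I would record this computation as a first step, since (ii), (iii) and (iv) are all really statements about this decomposition.

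\textbf{Step 1: (i) $\Leftrightarrow$ (ii).} This is essentially the content of \eqref{expression of subdiff nuc norm at F} combined with a variational characterization of the subdifferential of a norm. Since $F = \sigma u\otimes v$ has $\|F\|_* = \sigma$, condition (i) says $H$ lies in $\partial\|\cdot\|_*(F)$. The general fact (valid for any norm and its dual) is that $H \in \partial\|\cdot\|_*(F)$ iff $\|H\|\le 1$ and $\langle H, F\rangle = \|F\|_*$; writing $H = P_T H + P_{T^\perp} H$ and using $\langle P_{T^\perp}H, F\rangle = 0$ (as $F \in T$), the equality $\langle H, F\rangle = \sigma$ forces $\langle P_T H, u\otimes v\rangle = 1$, which combined with $\|P_T H\| \le \|H\| \le 1$ and a case analysis on the arrowhead form from Step 1 pins down $P_T H = u\otimes v$. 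Conversely, if $P_T(H) = u\otimes v$ and $\|P_{T^\perp}(H)\|\le 1$, then $\langle H, F\rangle = \sigma \langle P_T H, u\otimes v\rangle = \sigma = \|F\|_*$, and $\|H\| \le \max(\|P_T H\|, \|P_{T^\perp} H\|)$... — here one must be slightly careful, since the operator norm is not the max of the norms of orthogonal pieces in general. The correct argument is: for any unit $a \in \mathcal{H}_1$, decompose and use that $P_{T^\perp}H$ kills $u$ and has range in $v^\perp$, while $P_T H = u \otimes v$ here sends $a \mapsto \langle u, a\rangle v$; a direct estimate on $\|Ha\|^2 = |\langle u,a\rangle|^2 + \|P_{T^\perp}H a\|^2 \le |\langle u, a\rangle|^2 + \|a\|^2 - |\langle u,a\rangle|^2 = \|a\|^2$ gives $\|H\| \le 1$. (This uses $P_{T^\perp}H\, a = P_{T^\perp}H\, P_{u^\perp}a$ and $\|P_{u^\perp}a\|^2 = \|a\|^2 - |\langle u,a\rangle|^2$.)

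\textbf{Step 2: (ii) $\Leftrightarrow$ (iii).} Given (ii), set $W \eqdef P_{T^\perp}(H)$. Then $H = P_T H + W = u\otimes v + W$, and $W \in T^\perp$ gives $Wu = 0$, $W^* v = 0$ by Step 1, with $\|W\| \le 1$ by hypothesis. Conversely, given such a $W$: the conditions $Wu = 0$, $W^* v = 0$ say precisely $W \in T^\perp$, so $P_T(H) = P_T(u\otimes v) = u\otimes v$ and $P_{T^\perp}(H) = W$ with $\|W\| \le 1$.

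\textbf{Step 3: (iii) $\Leftrightarrow$ (iv) and closing the loop.} Given (iii): $Hu = (u\otimes v)u + Wu = v + 0 = v$, and $H^* v = v\otimes u\, v + W^* v = u + 0 = u$ (using $(u\otimes v)^* = v\otimes u$). For unit vectors $u^\perp \perp u$, $v^\perp \perp v$, one has $(u\otimes v)u^\perp = 0$, so $\langle H u^\perp, v^\perp\rangle = \langle W u^\perp, v^\perp\rangle$, and $|\langle W u^\perp, v^\perp\rangle| \le \|W\| \le 1$. Conversely, given (iv), define $W \eqdef H - u\otimes v$; then $Wu = Hu - v = 0$ and $W^* v = H^* v - u = 0$, so $W \in T^\perp$, meaning $W$ maps $u \mapsto 0$ with range in $v^\perp$, hence $\|W\| = \sup\{|\langle W u^\perp, v^\perp\rangle| : u^\perp \in u^\perp, v^\perp \in v^\perp \text{ unit}\} = \sup |\langle H u^\perp, v^\perp\rangle| \le 1$, which is (iii). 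Together with Steps 1–2 this closes the equivalences.

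\textbf{Main obstacle.} The one genuinely non-routine point is the operator-norm bound in the ``easy'' direction of Step 1 (and the analogous identity $\|W\| = \sup|\langle Hu^\perp, v^\perp\rangle|$ in Step 3): it is tempting but wrong to say that the operator norm of a sum of pieces with orthogonal ranges/domains is the max of the norms. The resolution, as sketched, is to exploit the very specific structure here — that $P_T H$ (being exactly $u \otimes v$ once (ii) holds) has range $\mathbb{R}v$ and $P_{T^\perp}H$ has range in $v^\perp$ and annihilates $u$ — so the Pythagorean identity $\|Ha\|^2 = |\langle u,a\rangle|^2 + \|P_{T^\perp}Ha\|^2$ does the job. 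I would make sure to present this computation in detail rather than gesture at it. Everything else is bookkeeping with the tensor identities $(u\otimes v)^* = v\otimes u$, $(u\otimes v)a = \langle u,a\rangle v$, and the characterization of $T^\perp$ from Step 1.
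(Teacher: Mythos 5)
Your plan matches the paper's closely — you derive the same formulas $P_T(H) = HP_u + P_vH - P_vHP_u$ and $P_{T^\perp}(H) = P_{v^\perp}HP_{u^\perp}$, and your Pythagorean estimate in the converse part of Step~1 is exactly the paper's argument for \ref{iv}$\Rightarrow$\ref{i}. But the forward part of your Step~1 contains a genuine error: you invoke $\|P_T H\| \le \|H\|$ in the \emph{operator} norm, and this is false. $P_T$ is an orthogonal projection only for the Hilbert--Schmidt inner product; it is not a contraction for the operator norm. Take $\calH_1 = \calH_2 = \RR^2$, $u = v = e_1$, and
\[
H = \begin{pmatrix} 1 & 1 \\ 1 & -1/10 \end{pmatrix}, \qquad P_T H = \begin{pmatrix} 1 & 1 \\ 1 & 0 \end{pmatrix}.
\]
Then $\|P_T H\| = (1+\sqrt{5})/2 \approx 1.618$, while $\|H\| = \bigl(9/10 + \sqrt{521}/10\bigr)/2 \approx 1.591$, so $\|P_T H\| > \|H\|$. (Contrast with $\|P_{T^\perp}H\| \le \|H\|$, which \emph{is} true because $P_{T^\perp}H = P_{v^\perp}HP_{u^\perp}$ is $H$ sandwiched between two norm-one projections; this is precisely what the paper records in \eqref{eq non expansivity P_Tperp}, and no analogous bound holds for $P_T$.) As a result, your conclusion that $\langle P_T H, u\otimes v\rangle = 1$ together with $\|P_T H\| \le 1$ ``pins down $P_T H = u\otimes v$'' is not justified by the argument as written.

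The fix — which is exactly what the paper does — is to bypass $P_T H$ entirely. From $\|H\|\le 1$ and $1 = \langle H, u\otimes v\rangle = \langle Hu, v\rangle_{\calH_2}$, Cauchy--Schwarz with $\|u\|_{\calH_1}=\|v\|_{\calH_2}=1$ gives $1 = \langle Hu, v\rangle_{\calH_2} \le \|Hu\|_{\calH_2} \le \|H\| \le 1$, so equality holds throughout, forcing $Hu = v$; symmetrically $H^*v = u$. Plugging these two identities into the arrowhead formula for $P_T(H)$ then yields $P_T(H) = u\otimes v$ directly, with no norm estimate on $P_T H$ ever being needed. Everything else in your proposal (Steps~2 and~3, in particular the identity $\|W\| = \sup|\langle Hu^\perp, v^\perp\rangle_{\calH_2}|$ for $W\in T^\perp$, and the Pythagorean bound $\|Ha\|^2 = |\langle u,a\rangle_{\calH_1}|^2 + \|P_{T^\perp}Ha\|_{\calH_2}^2$) is correct and consistent with the paper's proof.
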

\begin{proof}
    We start by reviewing useful properties of $P_T$ and $P_{T^\perp}$. For $H\in\calB_2(\calH_1;\calH_2)$, it holds that (see \citet[Equation (3.5)]{Candes2009-fw} for the finite-dimensional case)
    \begin{align}
        P_T(H)&=HP_u+P_vH-P_vHP_u=u\otimes Hu+H^*v\otimes v-\langle H,u\otimes v\rangle u\otimes v,\label{eq expression proj T}\\
        P_{T^\perp}(H)&=P_{v^\perp} HP_{u^\perp},\label{eq expression proj Tperp}
    \end{align}
    where $P_u$ (respectively $P_v$) is the projection onto $\operatorname{Span\{u\}}$ (respectively $\operatorname{Span}\{v\}$), while $P_{u^\perp}$ (respectively $P_{v^\perp}$) is the projection onto $\operatorname{Span\{u\}}^\perp$ (respectively $\operatorname{Span}\{v\}^\perp$). By the non-expansivity of $P_{u^\perp}$ and $P_{v^\perp}$, we notice that
    \begin{equation}\label{eq non expansivity P_Tperp}
        \|P_{T^\perp}H\|\leq\|H\|,\qquad H\in\calB_2(\calH_1;\calH_2).
    \end{equation}
    
    Now, we first prove that \ref{i} implies \ref{ii}. Let $H$ be such that $\|H\|\leq1$ and $\langle H,F\rangle=\|F\|_*$. In particular we have 
    \[
    1=\langle H,u\otimes v\rangle=
    \langle Hu, v\rangle_{\mathcal{H}_2}.
    \]
    Since $\|H\|\leq1$, we get $Hu=v$, $H^*v=u$. Therefore, by \Cref{eq expression proj T} we have $P_T(H)=u\otimes v$ and by \Cref{eq non expansivity P_Tperp} we have $\|P_{T^\perp}H\|\leq1$.
    
    The fact that \ref{ii} implies \ref{iii} follows immediately by setting $W=P_{T^\perp}H$, thanks to \eqref{eq expression proj Tperp}. 

    Let us now prove that \ref{iii} implies \ref{iv}. By $Wu=0$ and $W^*v=0$ we obtain $Hu=v$ and $H^*v=u$. Since $Hu^\perp=Wu^\perp$ for every $u^\perp$ orthogonal to $u$, then we get $|\langle Hu^\perp,v^\perp\rangle_{\calH_2}|\leq1$ for every pair $(u^\perp,v^\perp)\in \calH_1\times\calH_2$ of unit vectors respectively orthogonal to $u$ and $v$.

    Finally, we prove that \ref{iv} implies \ref{i}. The identity $Hu=v$ implies $\langle H,F\rangle=\sigma\langle Hu,v\rangle_{\calH_2}=\sigma=\|F\|_*$.
    Now, let $(a,b)\in\calH_1\times\calH_2$ and write 
    \begin{equation*}
        a=P_ua+P_{u^\perp}a,\qquad b=P_vb+P_{v^\perp}b.
    \end{equation*}
    The assumptions tell us that $H$ acts as isometry between $\operatorname{span}\{u\}$ and $\operatorname{span}\{v\}$, while it is non-expansive between $\operatorname{span}\{u\}^\perp$ and $\operatorname{span}\{v\}^\perp$. Therefore, we have
    \begin{equation*}
       \lvert\langle Ha,b\rangle_{\calH_2}\rvert
        =\lvert\langle HP_ua,P_vb\rangle_{\calH_2}+ \langle HP_{u^\perp}a,P_{v^\perp}b\rangle_{\calH_2}\rvert\leq\|P_ua\|_{\calH_1}\|P_vb\|_{\calH_2}+\|P_{u^\perp}a\|_{\calH_1}\|P_{v^\perp}b\|_{\calH_2}.
    \end{equation*}
    Hence $| \langle Ha,b\rangle_{\calH_2}|\leq \|a\|_{\calH_1}\|b\|_{\calH_2}$, which implies $\|H\|\leq1$.    
\end{proof}

The last result we prove in this subsection is the following technical lemma. We use it to prove the exact recovery result of \Cref{prop_exact_recovery_op}.

\begin{lemma}\label{lemma techical lemma for showing F unique sol}
    Let $u\in \calH_1$, $v\in\calH_2$ be such that $\|u\|_{\calH_1}=\|v\|_{\calH_2}=1$. Let $H\in\calB_2(\calH_1;\calH_2)$ be such that 
    \begin{equation}
        Hu=v, \quad H^*v=u\quad\mathrm{and} \quad |\langle Hu^\perp, v^\perp\rangle_{\calH_2}|<1
        \label{eq_strict_subdiff}
    \end{equation} 
    for every pair of unit vectors $u^\perp$ and $v^\perp$ respectively orthogonal to $u$ and $v$. Then, for every $G\in\calB_1(\calH_1;\calH_2)$ such that $\langle H,G\rangle=\|G\|_*$ there exists $\sigma\geq0$ such that $G=\sigma u\otimes v$.
\end{lemma}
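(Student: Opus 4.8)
The plan is to use the singular value decomposition of $G$ together with the constraints on $H$ to force all but one of the singular values of $G$ to vanish. Since $G\in\calB_1(\calH_1;\calH_2)$, write its SVD as $G=\sum_{i} \sigma_i\, \phi_i\otimes\psi_i$ with $\sigma_i>0$ non-increasing, $\{\phi_i\}$ orthonormal in $\calH_1$ and $\{\psi_i\}$ orthonormal in $\calH_2$, so that $\|G\|_*=\sum_i\sigma_i$. The hypothesis $\langle H,G\rangle=\|G\|_*$ then reads $\sum_i \sigma_i\langle H\phi_i,\psi_i\rangle_{\calH_2}=\sum_i\sigma_i$. First I would establish the elementary bound $|\langle H\phi_i,\psi_i\rangle_{\calH_2}|\le 1$ for each $i$: indeed $\|H\|\le 1$ follows from \Cref{lemma characterization subdiff at F} (conditions \eqref{eq_strict_subdiff} imply in particular $|\langle Hu^\perp,v^\perp\rangle|\le 1$, hence condition \ref{iv}, hence $\|H\|\le1$), and therefore $|\langle H\phi_i,\psi_i\rangle_{\calH_2}|\le\|H\phi_i\|_{\calH_2}\le 1$. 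Combining this with the equality above and the positivity of the $\sigma_i$ forces $\langle H\phi_i,\psi_i\rangle_{\calH_2}=1$ for every index $i$ with $\sigma_i>0$.

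Next, from $\langle H\phi_i,\psi_i\rangle_{\calH_2}=1$ together with $\|H\phi_i\|_{\calH_2}\le 1$ and $\|\psi_i\|_{\calH_2}=1$, the equality case of Cauchy–Schwarz gives $H\phi_i=\psi_i$ for each such $i$; by the same argument applied to $H^*$ (note $\langle H^*\psi_i,\phi_i\rangle_{\calH_1}=1$ and $\|H^*\|=\|H\|\le1$) we also get $H^*\psi_i=\phi_i$. The heart of the argument is then to show that there can be at most one nonzero singular value. Suppose $\sigma_1,\sigma_2>0$. Decompose $\phi_2=\langle u,\phi_2\rangle_{\calH_1}u + u^\perp$ with $u^\perp\perp u$, and similarly $\psi_2=\langle v,\psi_2\rangle_{\calH_2}v+v^\perp$. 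Using $Hu=v$, $H\phi_2=\psi_2$, and the fact that $H$ is an isometry on $\operatorname{span}\{u\}$ (from \ref{iv}), I would compute $\langle \psi_2,v\rangle_{\calH_2}=\langle H\phi_2,Hu\rangle_{\calH_2}$. The key point is that \eqref{eq_strict_subdiff} says $H$ is a \emph{strict} contraction on pairs of unit vectors orthogonal to $u$ and $v$; I would leverage this to show that unless $\phi_2$ is parallel to $u$ (equivalently $u^\perp=0$), one gets a strict inequality contradicting $H\phi_2=\psi_2$ being a unit vector. Concretely, $1=\|\psi_2\|_{\calH_2}^2=|\langle v,\psi_2\rangle|^2+\|v^\perp\|^2$, while from $H\phi_2=\psi_2$ and orthogonality of $\phi_2$ to... no: $\phi_2$ need not be orthogonal to $u$. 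I would instead argue: write $\psi_2=H\phi_2 = \langle u,\phi_2\rangle_{\calH_1} v + H u^\perp$ (using $Hu=v$ and linearity), so $Hu^\perp = \psi_2-\langle u,\phi_2\rangle v = v^\perp + (\langle v,\psi_2\rangle - \langle u,\phi_2\rangle)v$. Pairing with $v^\perp/\|v^\perp\|$ (if $v^\perp\ne 0$) and using the strict bound \eqref{eq_strict_subdiff} gives $\|v^\perp\|^2 = \langle Hu^\perp,v^\perp\rangle < \|u^\perp\|_{\calH_1}\|v^\perp\|_{\calH_2}$, whereas $\|u^\perp\|_{\calH_1}\le\|\phi_2\|_{\calH_1}=1$ would then force $\|v^\perp\|<\|u^\perp\|\le 1$. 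Iterating this with $H^*$ in place of $H$ (giving $\|u^\perp\|<\|v^\perp\|$) yields the contradiction $\|v^\perp\|<\|v^\perp\|$, unless $u^\perp=v^\perp=0$. But $u^\perp=0$ means $\phi_2$ is parallel to $u$, hence (being a unit vector) $\phi_2=\pm u=\pm\phi_1\cdot(\text{sign})$ — more precisely $\phi_2=\pm u$, contradicting $\phi_2\perp\phi_1$ once we also identify $\phi_1$ with $u$. To close that last gap I would first treat $i=1$: the same computation with $\phi_1$ shows $\phi_1=\pm u$ and $\psi_1=\pm v$ with matching signs (from $H\phi_1=\psi_1$ and $Hu=v$), so in fact $\phi_1\otimes\psi_1 = u\otimes v$. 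Then $\phi_2\perp\phi_1=u$ forces $u^\perp=\phi_2\ne 0$, and the strict-contraction argument gives the desired contradiction.

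Hence $G$ has at most one nonzero singular value, i.e.\ $G=\sigma_1\,\phi_1\otimes\psi_1=\sigma_1\, u\otimes v$ (or $G=0$), with $\sigma_1\ge 0$, which is the claim with $\sigma=\sigma_1$.

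I expect the main obstacle to be the bookkeeping in the strict-contraction step: one must handle the degenerate cases $u^\perp=0$ or $v^\perp=0$ separately (the strict bound in \eqref{eq_strict_subdiff} is only asserted for genuine unit vectors orthogonal to $u$, $v$), and one must correctly extract that the top singular pair coincides with $(u,v)$ before deriving the contradiction for lower indices. A cleaner alternative, which I would try first, is to note that $\langle H,G\rangle=\|G\|_*$ combined with $\|H\|\le 1$ means $H\in\partial\|\cdot\|_*(G)$ by \eqref{expression of subdiff nuc norm at F}, apply \Cref{lemma characterization subdiff at F}\ref{iii} to $G$ (write $G=\tau\,\phi\otimes\psi$ if it is already known rank one — but it is not), so the SVD route seems unavoidable; the subdifferential viewpoint mainly serves to organize the identities $H\phi_i=\psi_i$, $H^*\psi_i=\phi_i$ cleanly.
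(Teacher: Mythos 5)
Your proof is correct and follows essentially the same route as the paper: pass to the SVD $G=\sum_i\sigma_i\,\phi_i\otimes\psi_i$, deduce $\langle H\phi_i,\psi_i\rangle_{\calH_2}=1$ for every $i$ with $\sigma_i>0$, and use the strict bound in \eqref{eq_strict_subdiff} to conclude that each singular pair $(\phi_i,\psi_i)$ must equal $\pm(u,v)$. The paper packages this last step as the separate auxiliary \Cref{lemma techical lemma on Hilbert spaces}, proved by the direct estimate $\langle Ha,b\rangle=\langle a,u\rangle\langle b,v\rangle+\langle Ha_{u^\perp},b_{v^\perp}\rangle<1$ whenever both perpendicular components are nonzero, whereas you re-derive the same fact inline via the slightly longer two-sided contradiction $\|v^\perp\|<\|u^\perp\|<\|v^\perp\|$ obtained from $H$ and $H^*$ in tandem; both arguments are valid.
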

To prove this lemma we need the following result.
\begin{lemma}\label{lemma techical lemma on Hilbert spaces}
    Let $u\in \calH_1$, $v\in\calH_2$ be such that $\|u\|_{\calH_1}=\|v\|_{\calH_2}=1$ and let $H\in\calB(\calH_1;\calH_2)$ be such that \eqref{eq_strict_subdiff} holds. Then, for every pair of unit vectors $(a,b)\in\calH_1\times\calH_2$ we have $\langle Ha,b\rangle_{\calH_2}\leq1$. Moreover, equality holds if and only if $(a,b)=\pm(u,v)$. 
\end{lemma}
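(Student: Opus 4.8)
The plan is to prove the two claims—the inequality $\langle Ha,b\rangle_{\calH_2}\leq 1$ and the rigidity of the equality case—by decomposing $a$ and $b$ along $\operatorname{span}\{u\}$ and its orthogonal complement, exactly as in the final implication of \Cref{lemma characterization subdiff at F}. Write $a=\alpha u+a'$ and $b=\beta v+b'$ with $a'\perp u$, $b'\perp v$, so that $\alpha^2+\|a'\|_{\calH_1}^2=1$ and $\beta^2+\|b'\|_{\calH_2}^2=1$. Using $Hu=v$ and $H^*v=u$, the cross terms collapse: $\langle Ha',v\rangle_{\calH_2}=\langle a',H^*v\rangle_{\calH_1}=\langle a',u\rangle_{\calH_1}=0$ and $\langle Hu,b'\rangle_{\calH_2}=\langle v,b'\rangle_{\calH_2}=0$. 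Hence
\begin{equation*}
    \langle Ha,b\rangle_{\calH_2}=\alpha\beta\,\langle Hu,v\rangle_{\calH_2}+\langle Ha',b'\rangle_{\calH_2}=\alpha\beta+\langle Ha',b'\rangle_{\calH_2}.
\end{equation*}

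For the inequality, I would bound $\langle Ha',b'\rangle_{\calH_2}$. The strict hypothesis \eqref{eq_strict_subdiff} gives $|\langle Hu^\perp,v^\perp\rangle_{\calH_2}|<1$ for \emph{unit} vectors orthogonal to $u,v$; normalizing $a'/\|a'\|_{\calH_1}$ and $b'/\|b'\|_{\calH_2}$ and rescaling yields $|\langle Ha',b'\rangle_{\calH_2}|\leq \|a'\|_{\calH_1}\|b'\|_{\calH_2}$ (with equality requiring at least that $a',b'$ be proportional to a pair realizing the supremum, but the supremum may fail to be attained—so in general this is a strict inequality whenever $a',b'\neq 0$, though I only need $\leq$ here). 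Combining with $|\alpha\beta|\leq 1$ via Cauchy--Schwarz, $\alpha\beta+\langle Ha',b'\rangle_{\calH_2}\leq |\alpha|\,|\beta|+\|a'\|_{\calH_1}\|b'\|_{\calH_2}\leq \sqrt{\alpha^2+\|a'\|^2}\sqrt{\beta^2+\|b'\|^2}=1$, which is the first claim.

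For the equality case, suppose $\langle Ha,b\rangle_{\calH_2}=1$. Tracing back through the chain of inequalities, all of them must be equalities. In particular the final Cauchy--Schwarz step forces $(|\alpha|,\|a'\|_{\calH_1})$ and $(|\beta|,\|b'\|_{\calH_2})$ to be proportional as vectors in $\RR^2$, and since both have unit Euclidean norm they are equal: $|\alpha|=|\beta|$ and $\|a'\|_{\calH_1}=\|b'\|_{\calH_2}=:t$. Moreover $\alpha\beta=|\alpha||\beta|\geq 0$ and $\langle Ha',b'\rangle_{\calH_2}=\|a'\|_{\calH_1}\|b'\|_{\calH_2}=t^2$. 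If $t>0$, then the unit vectors $u^\perp\eqdef a'/t$ and $v^\perp\eqdef b'/t$ satisfy $\langle Hu^\perp,v^\perp\rangle_{\calH_2}=1$, contradicting the \emph{strict} bound in \eqref{eq_strict_subdiff}. Hence $t=0$, so $a'=b'=0$, $|\alpha|=|\beta|=1$, and $\alpha\beta=1$ forces $\alpha=\beta\in\{+1,-1\}$, i.e.\ $(a,b)=\pm(u,v)$. Conversely $(a,b)=\pm(u,v)$ clearly gives $\langle Ha,b\rangle_{\calH_2}=\langle Hu,v\rangle_{\calH_2}=1$.

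The only genuinely delicate point is the step $|\langle Ha',b'\rangle_{\calH_2}|\leq\|a'\|_{\calH_1}\|b'\|_{\calH_2}$: a priori \eqref{eq_strict_subdiff} only controls $\langle H\cdot,\cdot\rangle$ on the \emph{sphere} of the orthogonal complements, so one must be careful that the bound passes to all of $\operatorname{span}\{u\}^\perp\times\operatorname{span}\{v\}^\perp$ by homogeneity (the case $a'=0$ or $b'=0$ being trivial), and that the strictness is used only in the final contradiction and not prematurely—since the supremum of $\langle Hu^\perp,v^\perp\rangle_{\calH_2}$ over the unit sphere need not be attained in infinite dimensions, one cannot claim $\sup<1$ uniformly, only the pointwise strict inequality, which is exactly what the argument above uses.
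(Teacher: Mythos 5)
Your proof is correct and takes essentially the same route as the paper's: decompose $a,b$ along $\operatorname{span}\{u\}$, $\operatorname{span}\{v\}$ and their orthogonal complements, use $Hu=v$, $H^*v=u$ to kill the cross terms, apply the strict hypothesis \eqref{eq_strict_subdiff} to the normalized orthogonal parts, and finish with Cauchy–Schwarz in $\RR^2$. The only difference is organizational: the paper splits into cases (both orthogonal parts nonzero vs.\ one vanishing) and concludes strict inequality directly in the first case, whereas you prove the non-strict bound uniformly and then trace the equality conditions backwards; your closing remark that the strictness is pointwise (not a uniform $\sup<1$, which may fail in infinite dimensions) is a good sanity check and is exactly how both arguments use the hypothesis.
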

\begin{proof}
   Consider two unit vectors $a\in\calH_1$ and $b\in\calH_2$. Decompose these vectors as 
   \begin{equation*}
       a=a_u+a_{u^\perp},\qquad b=b_v+b_{v^\perp},
   \end{equation*}
    where $a_u=\langle a,u\rangle_{\calH_1}u$, $a_{u^\perp}=a-a_u$, $b_v=\langle b,v\rangle_{\calH_2}v$ and $b_{v^\perp}=b-b_v$. We have that
    \begin{equation*}
        \langle Ha,b\rangle_{\calH_2}=\langle a,u\rangle_{\calH_1}\langle b,v\rangle_{\calH_2}+\langle Ha_{u^\perp},b_{v^\perp}\rangle_{\calH_2}.
    \end{equation*}
    
    If $a_{u^\perp}\neq0$ and $b_{v^\perp}\neq0$, setting $u^\perp = \frac{a_{u^\perp}}{\|a_{u^\perp}\|_{\calH_1}}$ and $v^\perp = \frac{b_{v^\perp}}{\|b_{v^\perp}\|_{\calH_2}}$, we have
    \begin{equation*}
        \langle Ha_{u^\perp},b_{v^\perp}\rangle_{\calH_2}= \|a_{u^\perp}\|_{\calH_1}\|b_{v^\perp}\|_{\calH_2} \langle Hu^\perp,v^\perp\rangle_{\calH_2}< \|a_{u^\perp}\|_{\calH_1}\|b_{v^\perp}\|_{\calH_2},
    \end{equation*}
    which, by Cauchy Schwartz inequality, gives
    \begin{equation*}
    \begin{aligned}
        \langle Ha,b\rangle_{\calH_2}
        &<\langle a,u\rangle_{\calH_1}\langle b,v\rangle_{\calH_2}+\|a_{u^\perp}\|_{\calH_1}\|b_{v^\perp}\|_{\calH_2}\\
        &=\|a_u\|_{\calH_1}\|b_v\|_{\calH_2}+\|a_{u^\perp}\|_{\calH_1}\|b_{v^\perp}\|_{\calH_2}\\
        &\leq\|a\|_{\calH_1}\|b\|_{\calH_2}=1.
    \end{aligned}
    \end{equation*}
    As a consequence, if $a_{u^\perp}\neq0$ and $b_{v^\perp}\neq0$, the equality can never hold. 
    
    If $a_{u^\perp}=0$ or $b_{v^\perp}=0$, then
    \begin{equation*}
        \langle Ha,u\rangle_{\calH_2}=\langle a,u\rangle_{\calH_1}\langle b,v\rangle_{\calH_2}\leq1,
    \end{equation*}
    and, by the equality case in Cauchy Schwartz inequality, the equality holds if and only if $(a,b)=\pm(u,v)$.
\end{proof}
\begin{proof}[Proof of \Cref{lemma techical lemma for showing F unique sol}]
 If $G=0$, the statement is obvious. Assume now $G\neq 0$. Suppose that $\langle H,G\rangle=\|G\|_*$. Consider the singular value decomposition of $G$:
  \begin{equation*}
  G=\sum\limits_{i\in\NN}\sigma_i(G)\varphi_i\otimes \psi_i.
  \end{equation*}
Without loss of generality, assume $\sigma_i(G)>0$ for every $i$. Then
  \begin{equation*}
      \sum\limits_{i\in\NN}\sigma_i(G)=\sum\limits_{i\in\NN}\sigma_i(G)\langle H\varphi_i,\psi_i\rangle_{\calH_2}.
\end{equation*}   
Then $\langle H\varphi_i,\psi_i\rangle_{\calH_2}=1$ for every $i$.
Using \Cref{lemma techical lemma on Hilbert spaces}, we obtain $\varphi_i\otimes\psi_i=a\otimes b$ for every $i$. Therefore, $G=\sigma a\otimes b$ for $\sigma=\sum\limits_{i\in\NN}\sigma_i(G)>0$, as required.
\end{proof}

\subsection{Recovering rank-one operators via nuclear norm minimization}
\label{subsec_nuclear_norm_min_op}
In this subsection, we fix $N\in\NN^*$ and generalize the exact and robust recovery results of \Cref{subsec_recovery_nuc_norm_min} to the simultaneous recovery of $N$ rank-one linear operators. We fix an unknown ${F^\dagger=(F_i^\dagger)_{1\leq i\leq N}=(\sigma_i u_i\otimes v_i)_{1\leq i\leq N}}$ where $u_i\in\calH_1$ and $v_i\in\calH_2$ are unit vectors and $\sigma_i>0$ for every $i$. Given a linear measurement operator ${\Phi\colon\calB_2(\calH_1;\calH_2)^N\to\calH}$, where $\calH$ is a real separable Hilbert space, we define $\m\eqdef\Phi F^\dagger$. Depending on whether we have the exact knowledge of $\m$ or of a noisy version $\m^\delta$ of $\m$, we wish to recover $F^\dagger$ by solving one of the following problems
\begin{equation}
    \underset{F\in\calB_2(\calH_1;\calH_2)^N}{\mathrm{min}}~\sum\limits_{i=1}^N\|F_i\|_*~~\mathrm{s.t.}~~\Phi F=\m,
    \tag{$\mathcal{P}$}
    \label{primal_op}
\end{equation}
\begin{equation}
    \underset{F\in\calB_2(\calH_1;\calH_2)^N}{\mathrm{min}}~\frac{1}{2}\|\Phi F-\m^\delta\|_\calH^2+\lambda\sum\limits_{i=1}^N\|F_i\|_*,
    \tag{$\mathcal{P}_{\lambda}^\delta$}
    \label{primal_noisy_op}
\end{equation}
where $\lambda>0$ is a regularization parameter.

The dual problem to \eqref{primal_op} is 
\begin{equation}
    \underset{p\in \mathcal{H}}{\mathrm{sup}}~\langle p,\m\rangle_{\mathcal{H}}~~\mathrm{s.t.}~~\underset{1\leq i\leq N}{\mathrm{max}}\,\|(\Phi^*p)_i\|\leq 1,
    \tag{$\mathcal{D}$}
    \label{dual_op}
\end{equation}
and strong duality holds between \cref{primal_op} and \cref{dual_op} in the sense of the following proposition.
\begin{proposition}
    There exists a solution to \cref{primal_op} and the values of \cref{primal_op} and \cref{dual_op} are equal. Moreover, if $p$ is a solution to \cref{dual_op} and $H=\Phi^*p$, then every solution $F$ to \cref{primal_op} satisfies 
    \begin{equation}
        \langle F_i,H_i\rangle=\|F_i\|_*,\qquad 1\leq i\leq N.
        \label{opt_primal_dual_op}
    \end{equation}
    Conversely, if $\Phi F=\m$, \cref{opt_primal_dual_op} holds with $H=\Phi^*p$ and $\|H_i\|\leq 1$ for every $1\leq i\leq N$ then $F$ and $p$ are respectively solutions to \cref{primal_op} and \cref{dual_op}.
    \label{prop_strong_dual_op}
\end{proposition}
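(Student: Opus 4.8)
The plan is to run the standard Fenchel--Rockafellar argument. The subtle point is the absence of a duality gap: the constraint $\Phi F=\m$ has empty interior, so no Slater-type condition is available, and rather than invoking a constraint qualification I would deduce the no-gap property from lower semicontinuity of the value function, exploiting that nuclear-norm sublevel sets are bounded in Hilbert--Schmidt norm. I expect this to be the main obstacle; primal attainment and the optimality conditions are then routine.

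First, I would show that \cref{primal_op} has a solution by the direct method. The feasible set is nonempty since $\Phi F^\dagger=\m$, and $J(F):=\sum_i\|F_i\|_*\ge0$, so the value $v_{\mathrm P}$ of \cref{primal_op} is finite. Along a minimizing sequence one eventually has $J(F^{(n)})\le v_{\mathrm P}+1$; since $\|G\|_{\HS}\le\|G\|_*$ for every compact operator $G$, this bounds $(F^{(n)})$ in the Hilbert space $\calB_2(\calH_1;\calH_2)^N$, so a subsequence converges weakly to some $F^\star$. As $\Phi$ is bounded it is weakly continuous, hence $\Phi F^\star=\m$; and as $J$ is convex and lower semicontinuous — inheriting these properties from $\|\cdot\|_*$, cf.\ the discussion of \eqref{dual_formula_nuclear_norm} — it is weakly lower semicontinuous, so $J(F^\star)\le v_{\mathrm P}$ and $F^\star$ is a minimizer. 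The same argument shows that, for every $y\in\calH$ with $\inf\{J(F):\Phi F=y\}<\infty$, this infimum is attained.

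Next I would establish that the primal and dual values coincide. Formula \eqref{dual_formula_nuclear_norm} says precisely that $\|\cdot\|_*$ is the support function on $\calB_2(\calH_1;\calH_2)$ of the ($\HS$-closed, convex) operator-norm unit ball $B=\{H:\|H\|\le1\}$, so its Fenchel conjugate is $\iota_B$, and hence $J^*(Y)=\sum_i\iota_B(Y_i)$ equals $0$ if $\max_i\|Y_i\|\le1$ and $+\infty$ otherwise. Introduce the value function $h\colon\calH\to[0,+\infty]$, $h(y):=\inf\{J(F):\Phi F=y\}$. A direct computation gives $h^*(p)=\sup_F\big(\langle\Phi^*p,F\rangle-J(F)\big)=J^*(\Phi^*p)$, so that $h^{**}(\m)=\sup_p\big(\langle p,\m\rangle-J^*(\Phi^*p)\big)$ is exactly the value $v_{\mathrm D}$ of \cref{dual_op}, while $h(\m)=v_{\mathrm P}$. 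It then suffices to prove that $h$ is proper, convex and lower semicontinuous, since then $h=h^{**}$ and $v_{\mathrm P}=v_{\mathrm D}$. Properness ($h(\m)\le J(F^\dagger)<\infty$ and $h\ge0$) and convexity ($h$ is a partial infimum of a jointly convex function) are immediate. For lower semicontinuity, the attainment observation from the previous paragraph gives $\{h\le c\}=\Phi(\{J\le c\})$ for every $c\in\RR$; since $\{J\le c\}$ is bounded in $\calB_2(\calH_1;\calH_2)^N$ (again by $\|\cdot\|_{\HS}\le\|\cdot\|_*$), convex and closed, it is weakly compact, so $\Phi(\{J\le c\})$ is weakly compact and hence closed in $\calH$; thus all sublevel sets of $h$ are closed.

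Finally I would deduce the optimality conditions. For every primal-feasible $F$ and dual-feasible $p$, with $H=\Phi^*p$, weak duality reads $\langle p,\m\rangle=\sum_i\langle H_i,F_i\rangle\le\sum_i\|H_i\|\,\|F_i\|_*\le\sum_i\|F_i\|_*=J(F)$, using $|\langle G,K\rangle|\le\|G\|_*\,\|K\|$ (from \eqref{dual_formula_nuclear_norm}) and $\|H_i\|\le1$. If moreover $F$ solves \cref{primal_op} and $p$ solves \cref{dual_op}, both ends equal $v_{\mathrm P}=v_{\mathrm D}$, so every inequality is an equality; since $\langle H_i,F_i\rangle\le\|F_i\|_*$ term by term while the sums coincide, $\langle H_i,F_i\rangle=\|F_i\|_*$ for each $i$, which is \eqref{opt_primal_dual_op}. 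Conversely, if $\Phi F=\m$, $\|H_i\|\le1$ for all $i$, and \eqref{opt_primal_dual_op} holds with $H=\Phi^*p$, then $p$ is dual-feasible and $J(F)=\sum_i\langle F_i,H_i\rangle=\langle F,\Phi^*p\rangle=\langle\m,p\rangle$; combined with $\langle\m,p\rangle\le v_{\mathrm P}\le J(F)$ and $v_{\mathrm P}=v_{\mathrm D}$, this forces $J(F)=v_{\mathrm P}$ and $\langle p,\m\rangle=v_{\mathrm D}$, so $F$ solves \cref{primal_op} and $p$ solves \cref{dual_op}.
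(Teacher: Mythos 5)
Your proof is correct, and it takes a genuinely different route from the paper's. The paper sidesteps the primal-side constraint-qualification issue by applying Fenchel--Rockafellar duality to the \emph{dual} problem: it rewrites \cref{dual_op} as $\inf_p\mathcal{F}(p)+\mathcal{G}(\Phi^*p)$ with $\mathcal{F}=-\langle\cdot,\m\rangle_\calH$ and $\mathcal{G}=\iota_{\{\|\cdot\|\leq1\}^N}$, observes that $\mathcal{G}$ is finite and continuous at $0=\Phi^*0$ (so a Slater-type condition holds on \emph{that} side), and invokes \citet[Theorem~III.4.1]{ekeland1999convex} to conclude that the Fenchel dual of this problem --- which, after a sign change, is \cref{primal_op} --- is attained with no duality gap; the extremality relations of the same framework then yield \eqref{opt_primal_dual_op}. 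You instead establish primal attainment directly by the weak direct method (coercivity via $\|\cdot\|_{\HS}\leq\|\cdot\|_*$, weak lower semicontinuity of the nuclear norm), and close the duality gap through a value-function biconjugate argument, proving the lower semicontinuity of $h$ from the identity $\{h\leq c\}=\Phi(\{J\leq c\})$ combined with the weak compactness of bounded, closed, convex subsets of the Hilbert space $\calB_2(\calH_1;\calH_2)^N$. Both approaches then derive the optimality and certificate equivalences from weak duality in the same way. The paper's route is shorter once one spots the predual trick; yours is more self-contained, avoids any constraint qualification altogether, and makes the mechanism behind primal attainment (inf-compactness in the Hilbert--Schmidt topology) explicit rather than inheriting it from the dual attainment theorem.
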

The proof of this proposition, which relies on standard duality results in convex optimization, is given in \Cref{appendix_strong_duality_internal}.

Applying \Cref{lemma characterization subdiff at F} to $F_i^\dagger$ for every $1\leq i\leq N$, we obtain that $\|H_i\|\leq 1$ and \eqref{opt_primal_dual_op} hold if and only if $H_i=u_i\otimes v_i+W_i$ with $W_iu_i=0$, $W^*_iv_i=0$ and $\|W_i\|\leq 1$. As in \Cref{sec_nuclear_norm_min}, the key to obtain exact and robust recovery results is to strengthen these conditions to obtain a non-degenerate source condition, which we define below. 
\begin{definition}
    We say that the non-degenerate source condition holds if
    \begin{equation}
        \begin{aligned}
        &\mathrm{there~exists}~H\in\mathrm{Im}(\Phi^*)~\mathrm{s.t.}~\mathrm{for~every}~1\leq i\leq N,~H_i=u_i\otimes v_i+W_i\\&\mathrm{with}~W_iu_i=0,~W_i^*v_i=0~\mathrm{and}~\|W_i\|<1.
        \end{aligned}
        \label{ndsc_op}
        \tag{NDSC}
    \end{equation}
\end{definition}
As shown in the proof of \Cref{prop_exact_recovery_op} below, assuming \Cref{ndsc_op}  allows us to conclude that every solution to \cref{primal} belongs to a cone $\mathcal{C}$ defined by $\mathcal{C}\eqdef\prod_{1\leq i\leq N}\{\alpha_iu_i\otimes v_i:\alpha_i\ge0\}$. To obtain the exact recovery property, the \Cref{ndsc_op} therefore has to be complemented with the assumption that $\Phi$ is injective on $\mathcal{C}$. In the case $N=1$, this amounts to assuming that $z\neq 0$ as in \Cref{prop_exact_recovery}.
\begin{proposition}
    Assume that the non-degenerate source condition \Cref{ndsc_op} holds and that $\Phi$ is injective on $\mathcal{C}$. Then $F^\dagger=(\sigma_i u_i\otimes v_i)_{1\leq i\leq N}$ is the unique solution to \cref{primal_op}.
    \label{prop_exact_recovery_op}
\end{proposition}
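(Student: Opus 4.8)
The plan is to assemble three ingredients that are already available: the strong duality of \Cref{prop_strong_dual_op}, the subdifferential characterization of \Cref{lemma characterization subdiff at F}, and the rigidity statement of \Cref{lemma techical lemma for showing F unique sol}. The role of \eqref{ndsc_op} is to upgrade ``$F^\dagger$ is \emph{a} minimizer'' to ``$F^\dagger$ is \emph{the} minimizer'': the strict bound $\|W_i\|<1$ will be converted into the strict condition \eqref{eq_strict_subdiff}, which in turn forces every minimizer into the cone $\mathcal{C}$, where injectivity of $\Phi$ closes the argument.

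First I would unpack \eqref{ndsc_op}: it yields $p\in\calH$ such that $H\eqdef\Phi^*p$ satisfies, for each $i$, $H_i=u_i\otimes v_i+W_i$ with $W_iu_i=0$, $W_i^*v_i=0$ and $\|W_i\|<1$. A direct computation then gives $H_iu_i=v_i$ and $H_i^*v_i=u_i$, while for unit vectors $u^\perp\perp u_i$ and $v^\perp\perp v_i$ one has $\langle H_iu^\perp,v^\perp\rangle_{\calH_2}=\langle W_iu^\perp,v^\perp\rangle_{\calH_2}$, whence $|\langle H_iu^\perp,v^\perp\rangle_{\calH_2}|\le\|W_i\|<1$. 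In particular each $H_i$ meets the hypotheses \eqref{eq_strict_subdiff} of \Cref{lemma techical lemma for showing F unique sol}; moreover $\langle H_i,F_i^\dagger\rangle=\sigma_i=\|F_i^\dagger\|_*$, and $\|H_i\|\le1$ follows from the implication ``(iv)$\Rightarrow$(i)'' in \Cref{lemma characterization subdiff at F}. Since $\Phi F^\dagger=\m$, the converse part of \Cref{prop_strong_dual_op} then certifies at once that $F^\dagger$ solves \eqref{primal_op} and that $p$ solves the dual problem \eqref{dual_op}.

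For uniqueness, let $F$ be any minimizer of \eqref{primal_op}. Its objective value is bounded by that of $F^\dagger$, namely $\sum_i\sigma_i<+\infty$, so each $F_i$ has finite nuclear norm, i.e.\ $F_i\in\calB_1(\calH_1;\calH_2)$, and moreover $\Phi F=\m=\Phi F^\dagger$. Applying the direct part of \Cref{prop_strong_dual_op} with the dual solution $p$ produced above gives $\langle F_i,H_i\rangle=\|F_i\|_*$ for every $i$. Since $H_i$ satisfies \eqref{eq_strict_subdiff}, \Cref{lemma techical lemma for showing F unique sol} now forces $F_i=\alpha_iu_i\otimes v_i$ for some $\alpha_i\ge0$, so $F\in\mathcal{C}$. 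As $F^\dagger\in\mathcal{C}$ as well and $\Phi$ is injective on $\mathcal{C}$ with $\Phi F=\Phi F^\dagger$, we conclude $F=F^\dagger$.

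This is essentially a bookkeeping assembly of earlier results, so I do not anticipate a genuinely hard step; the two points deserving care are (i) verifying that an arbitrary minimizer has each component in $\calB_1(\calH_1;\calH_2)$, so that \Cref{lemma techical lemma for showing F unique sol} is applicable — this is exactly where finiteness of the optimal value enters — and (ii) making sure the strict inequality $\|W_i\|<1$ in \eqref{ndsc_op} is precisely what lands us in \eqref{eq_strict_subdiff} rather than merely in the subdifferential $\partial\|\cdot\|_*(F_i^\dagger)$, since it is this strictness (not mere membership of $H_i$ in the subdifferential) that produces uniqueness as opposed to mere optimality. Finally, it is worth recording why injectivity of $\Phi$ on $\mathcal{C}$ cannot be omitted: it is the step that turns $F\in\mathcal{C}$ into $F=F^\dagger$; for $N=1$ it is automatic because \eqref{ndsc_op} already forces $\m\neq0$ (indeed $\langle\m,p\rangle_\calH=\langle F^\dagger,H\rangle=\sum_i\sigma_i>0$), whereas for $N\ge2$ it is a genuine additional hypothesis.
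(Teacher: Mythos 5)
Your proof is correct and follows the same path as the paper's: use \Cref{lemma characterization subdiff at F} and strong duality (\Cref{prop_strong_dual_op}) to certify optimality of $F^\dagger$ and of $p$, then apply the rigidity \Cref{lemma techical lemma for showing F unique sol} to any minimizer via the strictness $\|W_i\|<1$, and close with injectivity on $\mathcal{C}$. Your extra remarks — explicitly checking $F_i\in\calB_1(\calH_1;\calH_2)$ so that the technical lemma applies, and noting that for $N=1$ the cone-injectivity hypothesis already follows from \eqref{ndsc_op} since $\langle\m,p\rangle=\sum_i\sigma_i>0$ — are both correct and clarifying, but they are refinements of the same argument rather than a different route.
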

\begin{proof}
By Lemma~\ref{lemma characterization subdiff at F}, it holds that $\|H_i\|\leq 1$ and ${\langle H_i,F^\dagger_i\rangle=\sigma_i=\|F^\dagger_i\|_*}$ for every $i$. Using \Cref{prop_strong_dual_op}, we obtain the optimality of $F^\dagger$ for \cref{primal_op} and the optimality of $p$ for \cref{dual_op}, where $H=\Phi^*p$. Now, let $F$ be a solution to \cref{primal_op}. Since $p$ solves \cref{dual_op} and $H=\Phi^*p$, by \Cref{prop_strong_dual_op}, we know that $\langle H_i,F_i\rangle=\|F_i\|_*$ for every $i$. By \Cref{ndsc_op}, we notice that $H_iu_i=v_i$, $H_i^*v_i=u_i$ and 
$$
\lvert\langle H_i u^\perp,v^\perp\rangle\rvert=\lvert\langle W_i u^\perp,v^\perp\rangle\rvert<1,
$$
for every unit vectors $u^\perp, v^\perp$ respectively orthogonal to $u,v$, where we used $\|W_i\|<1$ for the last inequality. Hence, applying \Cref{lemma techical lemma for showing F unique sol}, we obtain the existence of $\alpha_i\geq 0$ such that $F_i=\alpha_i u_i\otimes v_i$ for every $i$. Finally, the injectivity of $\Phi$ on $\mathcal{C}$ and the fact that $\Phi F=z=\Phi F^\dagger$ allow us to conclude that $F=F^\dagger$.
\end{proof}
As in the finite-dimensional case, to obtain robust recovery guarantees, the non-degenerate source condition should be complemented with an injectivity condition on the model tangent subspace $T$ defined by
\begin{equation}\label{eq:TandTi}
    T\eqdef\prod_{1\leq i\leq N}T_i~~\mathrm{with}~~T_i=\{u_i\otimes a+b\otimes v_i:(a,b)\in\calH_2\times\calH_1\}.
\end{equation}
The main difference with the finite-dimensional case is that, in addition to the injectivity of $\Phi$ on $T$, we also need to ensure that $\restriction{\Phi}{T}$ has \emph{closed range}.
\begin{proposition}
    Let $\delta$ and $c$ be two positive constants. If $\restriction{\Phi}{T}$ is injective and has closed range and the non-degenerate source condition \cref{ndsc_op} holds, for any minimizer $F^\delta$ of \cref{primal_noisy_op} with $\lambda=c\delta$ and $\m^\delta\in\calH$ such that $\|\m^\delta-\m\|_{\calH}\leq \delta$, we have that $\sum_{i=1}^N\|(F^\delta-F^\dagger)_i\|_{\HS}=\mathcal{O}(\delta)$.
    \label{prop_robust_recovery_op}
\end{proposition}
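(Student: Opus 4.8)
The strategy is the standard two-ingredient argument for robust recovery via convex duality, adapted to the infinite-dimensional setting. Let $F^\delta$ be a minimizer of \cref{primal_noisy_op} with $\lambda = c\delta$ and let $R \eqdef F^\delta - F^\dagger$. Since $F^\dagger$ is feasible for the data $\m$, optimality of $F^\delta$ gives the energy bound
\[
\tfrac12\|\Phi F^\delta - \m^\delta\|_\calH^2 + \lambda \textstyle\sum_i \|F^\delta_i\|_* \;\le\; \tfrac12\|\Phi F^\dagger - \m^\delta\|_\calH^2 + \lambda \textstyle\sum_i \|F^\dagger_i\|_* \;\le\; \tfrac12\delta^2 + \lambda \textstyle\sum_i \|F^\dagger_i\|_*,
\]
from which I would extract both $\|\Phi R\|_\calH = \mathcal{O}(\delta)$ (after using $\|\Phi F^\dagger - \m^\delta\| \le \delta$) and a one-sided control $\sum_i \|F^\delta_i\|_* - \|F^\dagger_i\|_* \le \mathcal{O}(\delta)$.

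The nuclear-norm part is handled by the non-degenerate source condition. Let $H = \Phi^* p \in \operatorname{Im}(\Phi^*)$ be the certificate from \cref{ndsc_op}, so $H_i = u_i\otimes v_i + W_i$ with $\|W_i\| < 1$; set $\rho \eqdef \max_i \|W_i\| < 1$. Using the characterization \ref{ii} of \Cref{lemma characterization subdiff at F}, $H_i = P_T(H)_i + P_{T^\perp}(H)_i$ with $P_{T_i}(H_i) = u_i\otimes v_i$. For each $i$, convexity of $\|\cdot\|_*$ and the fact that $u_i\otimes v_i \in \partial\|\cdot\|_*(F^\dagger_i)$ give $\|F^\delta_i\|_* \ge \|F^\dagger_i\|_* + \langle u_i\otimes v_i, R_i\rangle$; combined with the duality pairing $|\langle P_{T^\perp_i}(H_i), R_i\rangle| = |\langle W_i, P_{T^\perp_i}(R_i)\rangle| \le \rho \|P_{T^\perp_i}(R_i)\|_{\HS}$ (using $\|W_i\|<1$ against the nuclear norm of $P_{T^\perp_i}(R_i)$, which equals its $\HS$-norm only after a Cauchy–Schwarz-type step — more carefully, pair against $\|P_{T^\perp_i}(R_i)\|_*$ and then I need $\|\cdot\|_{\HS} \le \|\cdot\|_*$ going the right way, so the clean route is to keep $\|P_{T^\perp_i}(R_i)\|_*$ on the right), I obtain the cone-type estimate
\[
(1-\rho)\textstyle\sum_i \|P_{T^\perp_i}(R_i)\|_* \;\le\; \textstyle\sum_i \|P_{T_i}(R_i)\|_* + \langle H, R\rangle + \mathcal{O}(\delta),
\]
and since $\langle H, R\rangle = \langle p, \Phi R\rangle_\calH \le \|p\|_\calH \|\Phi R\|_\calH = \mathcal{O}(\delta)$, this bounds $\sum_i \|P_{T^\perp_i}(R_i)\|_{\HS} \le \sum_i \|P_{T^\perp_i}(R_i)\|_*$ by a constant times $\sum_i \|P_{T_i}(R_i)\|_{\HS} + \mathcal{O}(\delta)$.

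It remains to control the tangential part $\sum_i \|P_{T_i}(R_i)\|_{\HS}$, and this is where the closed-range hypothesis enters — and is the main obstacle. I would write $\Phi R = \Phi P_T(R) + \Phi P_{T^\perp}(R)$. The second term is $\mathcal{O}(\delta)$ by the previous step (it is bounded by $\|\Phi\|$ times $\sum_i \|P_{T^\perp_i}(R_i)\|_{\HS}$, which we have not yet closed — so I must first substitute back and solve the coupled inequalities, or better, bound things in the right order). The clean order is: (1) from the cone estimate, $\|P_{T^\perp}(R)\|_{\HS} \le C\big(\|P_T(R)\|_{\HS} + \delta\big)$; (2) $\|\Phi P_T(R)\|_\calH \le \|\Phi R\|_\calH + \|\Phi\|\,\|P_{T^\perp}(R)\|_{\HS} \le \mathcal{O}(\delta) + C'\|P_T(R)\|_{\HS}$; (3) injectivity of $\restriction{\Phi}{T}$ with closed range yields, by the bounded inverse theorem applied to $\restriction{\Phi}{T}\colon T \to \overline{\operatorname{Im}(\restriction{\Phi}{T})}$, a constant $\kappa > 0$ with $\|P_T(R)\|_{\HS} \le \kappa \|\Phi P_T(R)\|_\calH$. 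Plugging (2) into (3) gives $\|P_T(R)\|_{\HS} \le \kappa\,\mathcal{O}(\delta) + \kappa C' \|P_T(R)\|_{\HS}$, which is useless unless $\kappa C' < 1$ — so in fact I must absorb the $\|\Phi\|\,\|P_{T^\perp}(R)\|$ term differently. The correct fix is to keep $\|\Phi\|\,C$ times $\delta$ only and note that step (1)'s constant $C$ multiplies $\|P_T(R)\|_{\HS}$, so I should instead feed the cone estimate's $\delta$-part alone is not enough; the genuine resolution (as in \citet[Section 6]{vaiterLowComplexityRegularizations2014}) is that closed range plus injectivity on $T$ gives coercivity $\|P_T(R)\|_{\HS} \le \kappa\|\Phi P_T(R)\|_\calH$ and one chooses to estimate $\Phi P_T(R) = \Phi R - \Phi P_{T^\perp}(R)$ then uses the cone inequality to bound $\|P_{T^\perp}(R)\|$ — the two estimates combine into a linear system in the two unknowns $\|P_T(R)\|_{\HS}, \|P_{T^\perp}(R)\|_{\HS}$ whose solvability for small perturbations I must verify; the point is that the off-diagonal coupling can be made subordinate because the cone inequality has the strict factor $(1-\rho)$ and $\langle H,R\rangle$ contributes only $\mathcal{O}(\delta)$, not a term in $\|P_T(R)\|$. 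Once the linear system is solved, $\sum_i \|R_i\|_{\HS} \le \|P_T(R)\|_{\HS} + \|P_{T^\perp}(R)\|_{\HS} = \mathcal{O}(\delta)$, which is the claim.

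The only genuinely infinite-dimensional subtlety is that injectivity of $\restriction{\Phi}{T}$ alone does not give a lower bound $\|P_T(R)\|_{\HS} \lesssim \|\Phi P_T(R)\|_\calH$; this is exactly why the closed-range hypothesis is imposed, and the bounded inverse theorem supplies the needed constant. I expect assembling the coupled inequalities with the correct constants to be the only delicate bookkeeping; everything else is a transcription of the finite-dimensional argument of \Cref{prop_robust_recovery} using \Cref{lemma characterization subdiff at F} in place of \Cref{lemma_equiv_subdiff}.
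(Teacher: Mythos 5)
Your plan has the right ingredients --- the energy bound from optimality, the certificate $H=\Phi^*p$ from \eqref{ndsc_op}, and coercivity of $\restriction{\Phi}{T}$ via the bounded inverse theorem --- but the cone-type estimate you write down,
\[
(1-\rho)\sum_i\|P_{T_i^\perp}(R_i)\|_*\;\le\;\sum_i\|P_{T_i}(R_i)\|_*+\langle H,R\rangle+\mathcal{O}(\delta),
\]
is not the right inequality, and its presence is what forces you into the coupled linear system with the small-constant obstruction ($\kappa C'<1$) that you then acknowledge but do not resolve. The resolution is not to ``make the off-diagonal coupling subordinate''; it is to observe that the coupling should not be there at all. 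In the Bregman-divergence argument used by the paper (Lemma~\ref{lemma Bound on P_Tperp with bregman divergence}), one subtracts the Bregman divergences associated to two different subgradients of $\sum_i\|\cdot\|_*$ at $F^\dagger$, namely $H$ and $(u_i\otimes v_i+V_i)_i$ with $V_i\in T_i^\perp$, $\|V_i\|\le1$. Since $D_{(u_i\otimes v_i)+V}(F^\delta,F^\dagger)\ge0$, this gives
\[
D_H(F^\delta,F^\dagger)\;\ge\;\sum_i\langle (u_i\otimes v_i+V_i)-H_i,\,R_i\rangle\;=\;\sum_i\langle V_i-W_i,\,P_{T_i^\perp}(R_i)\rangle ,
\]
because both $V_i$ and $W_i=H_i-u_i\otimes v_i$ lie in $T_i^\perp$. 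Taking the supremum over admissible $V$ and using $\|W_i\|<1$ yields $D_H(F^\delta,F^\dagger)\ge(1-\max_i\|W_i\|)\sum_i\|P_{T_i^\perp}(R_i)\|_*$, with \emph{no} $\|P_T(R)\|$ on the right-hand side. Since $D_H(F^\delta,F^\dagger)=\sum_i(\|F^\delta_i\|_*-\|F^\dagger_i\|_*)-\langle p,\Phi R\rangle_\calH$ and both terms are $\mathcal{O}(\delta)$ by the energy bound and the prediction-error estimate (Lemma~\ref{lemma_bregman_rate_prediction}), the tangential-orthogonal part is controlled \emph{independently} of the tangential part.

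Once $\sum_i\|P_{T_i^\perp}(R_i)\|_{\HS}=\mathcal{O}(\delta)$ is known, the tangential part is obtained in one pass, with no system to solve: coercivity gives $\sum_i\|P_{T_i}(R_i)\|_{\HS}\le C_\Phi\|\Phi P_T(R)\|_\calH\le C_\Phi\|\Phi R\|_\calH+C_\Phi\|\Phi\|\sum_i\|P_{T_i^\perp}(R_i)\|_{\HS}$, and both summands on the right are already $\mathcal{O}(\delta)$. Your approach, by contrast, keeps $\|P_T(R)\|$ on the right-hand side of the cone estimate, producing a genuine self-coupling of the form $\|P_T(R)\|\le\kappa C'\|P_T(R)\|+\mathcal{O}(\delta)$ whose closure requires $\kappa C'<1$ --- a smallness condition on $\|\Phi\|$ and $C_\Phi$ that is neither assumed nor true in general. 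That step in your plan would fail without an additional unstated hypothesis, whereas the Bregman route closes unconditionally.
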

The proof of \Cref{prop_robust_recovery_op} relies on classical arguments in sparse or low-complexity regularization (see for instance \citet[Section 6]{vaiterLowComplexityRegularizations2014} and \cite{burgerConvergenceRatesConvex2004,resmeritaRegularizationIllposedProblems2005,hofmannConvergenceRatesResult2007,scherzerVariationalMethodsImaging2008,grasmairNecessarySufficientConditions2011}). Its proof is postponed to \Cref{appendix_robust_recovery}. As in \Cref{subsec_recovery_nuc_norm_min}, we stress that its main drawback is that it does not provide model consistency guarantees. In particular, it does not allow us to claim that $F^\delta$ is a rank-one operator. Ensuring that this stronger property holds would require showing that a special dual certificate, called the minimal norm dual certificate, is non-degenerate. In the following paragraph, we explain how a simple proxy for this certificate could be obtained.

\paragraph{Pre-certificate.} If $\restriction{\Phi}{T}$ is injective and has closed range, then
\begin{equation}
        \underset{p\in\mathcal{H}}{\mathrm{inf}}~\|p\|_{\mathcal{H}}~~\mathrm{s.t.}~~\mathrm{for~every}~1\leq i\leq N,~[P_T\Phi^*p]_i=u_i\otimes v_i
    \label{def_pre_certif}
\end{equation}
has a unique solution. Indeed, using \citet[Theorem 2.21]{brezisFunctionalAnalysisSobolev2011}, we obtain that $L\eqdef(\restriction{\Phi}{T})^*=P_T\Phi^*$ is surjective. As a result, $LL^*$ is invertible and the pseudo-inverse $L^\dagger$ of $L$ is well-defined and given by $L^\dagger=L^*(LL^*)^{-1}$. This shows that the unique solution $p$ to \eqref{def_pre_certif} is 
\begin{equation*}
    L^\dagger (u_i\otimes v_i)_{1\leq i\leq N}=\restriction{\Phi}{T} (P_T\Phi^*\restriction{\Phi}{T})^{-1}(u_i\otimes v_i)_{1\leq i\leq N}.
\end{equation*} 
In the literature on sparse or low complexity regularization (see for instance \cite{fuchsSparseRepresentationsArbitrary2004} for the case of $\ell^1$ regularization, \cite{Candes2009-fw} for nuclear norm regularization and \cite{vaiterModelSelectionLow2015,vaiterModelConsistencyPartly2018} for the more general case of partly smooth regularizers), the quantity $H=\Phi^*p$ is often called \emph{dual pre-certificate}. It satisfies $P_{T_i}(H_i)=u_i\otimes v_i$ but it is not necessarily a valid dual certificate, in the sense that $\|P_{T_i^\perp}H_i\|$ could be larger than or equal to $1$. Its main interest is that it can be computed by solving a linear system. As a result, it can be used as a simple proxy for the minimal norm dual certificate and proving it is a valid certificate provides model selection guarantees. Although we do not use it in the following, we think investigating its behavior in the settings of \Cref{sec_internal_measurements,sec_calderon} could be an interesting avenue for future works.

\section{Bochner spaces}\label{sec_bochner}

The lifting approach we present in \Cref{sec_internal_measurements,sec_calderon} leads us to consider functions of $2d$ variables with different regularity on the first $d$ and on the last $d$ variables. The natural spaces for this type of functions are  \textit{Bochner spaces}. As explained in this section, these spaces are isometrically isomorphic to spaces of Hilbert-Schmidt operators, which allows the corresponding nuclear norm minimization problems to be addressed as discussed in \Cref{subsec_nuclear_norm_min_op}.
 
For every $m\ge0$, we denote  the Sobolev space of order $m$ on a domain $\Omega\subseteq \mathbb{R}^d$ by $\Hm$, with the usual convention that $H^0(\Omega)=\LD$. Given  a real separable Hilbert space $\calH$, the Hilbert space of $\calH$-valued $H^m$-functions (see \cite{kreuter2015sobolev}) is defined as
\begin{equation*}
    \HmH\eqdef\{F\in L^2(\Omega;\calH): \operatorname{D}^\alpha F\in L^2(\Omega;\calH) \mbox{ for } \lvert\alpha\rvert\leq m\},
\end{equation*}
where $L^2(\Omega;\calH)$ denotes the space of square integrable $\calH$-valued functions (see \citet[Section V.5]{Yosida1980-mb} and \citet[Chapter 2]{kreuter2015sobolev}). Its inner product is given by 
\begin{equation*}
    \langle F,G\rangle_{H^m(\Omega;\calH)}\eqdef\sum\limits_{\lvert\alpha\rvert\leq m} \langle \operatorname{D}^\alpha F,\operatorname{D}^\alpha G\rangle_{L^2(\Omega;\calH)}, \qquad F,G\in H^m(\Omega;\calH).
\end{equation*}
Most classical results about real-valued Sobolev functions generalize to $\mathcal{H}$-valued Sobolev functions. We refer the reader to \cite{arendt2016mapping} for further details. 

We now turn to the following lemma, which states that $\calB_2(\Hm;\calH)$ and $\HmH$ are isomorphic. For the sake of completeness, its proof is given in \Cref{appendix_proof_isomorphism}.
\begin{lemma}\label{lemma isomorphism B2 and Bochner}
    There exists an isometric isomorphism 
    \[
    \mathcal{J}\colon \calB_2(\Hm;\calH)\to \HmH
    \]
    such that, for every $u\in\Hm$ and $v\in\calH$, it holds $\mathcal{J}(u\otimes v)(x)=u(x)v\in\calH$ for almost every $x\in\Omega$.
\end{lemma}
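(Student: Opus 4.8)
The plan is to build the map $\mathcal{J}$ concretely using an orthonormal basis and then check it is well-defined, isometric, surjective, and behaves correctly on rank-one operators. First I would fix an orthonormal basis $\{e_k\}_{k\in\NN}$ of $\Hm$ (this is legitimate since $\Hm$ is separable). Given $G\in\calB_2(\Hm;\calH)$, each $Ge_k$ is an element of $\calH$, and the obvious candidate for $\mathcal{J}(G)$ is the function $x\mapsto \sum_{k\in\NN} e_k(x)\, Ge_k \in \calH$, interpreted as an element of $\HmH$. To make sense of this and of the Bochner-space inner product I would first record that $\HmH$ decomposes as an orthogonal Hilbert-space tensor product $\Hm\,\widehat\otimes\,\calH$: the family $\{e_k\otimes f_j\}_{k,j}$, where $\{f_j\}$ is an orthonormal basis of $\calH$ and $(e_k\otimes f_j)(x)=e_k(x)f_j$, is an orthonormal basis of $\HmH$. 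This is the technical backbone and is where I expect to spend the most care, because it requires checking that $e_k\otimes f_j\in\HmH$ (immediate: $\operatorname{D}^\alpha(e_k\otimes f_j)=(\operatorname{D}^\alpha e_k)\otimes f_j$), that this family is orthonormal for $\langle\cdot,\cdot\rangle_{H^m(\Omega;\calH)}$ (a direct computation using $\langle (\operatorname{D}^\alpha e_k)\otimes f_j,(\operatorname{D}^\alpha e_{k'})\otimes f_{j'}\rangle_{L^2(\Omega;\calH)}=\langle \operatorname{D}^\alpha e_k,\operatorname{D}^\alpha e_{k'}\rangle_{L^2(\Omega)}\,\langle f_j,f_{j'}\rangle_{\calH}$, summed over $|\alpha|\le m$), and that it is total in $\HmH$. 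For totality I would use that finite-rank (hence simple-function-type) approximations are dense: an arbitrary $F\in\HmH$ can be expanded as $F=\sum_j F_j\otimes f_j$ with $F_j=\langle F(\cdot),f_j\rangle_{\calH}\in\Hm$, and then each $F_j$ is expanded in the $e_k$; convergence in $\HmH$ follows from Parseval in each factor.

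With this tensor-product picture in hand, I would define $\mathcal{J}$ on basis elements by $\mathcal{J}(e_k\otimes f_j)\eqdef e_k\otimes f_j$ (left side a rank-one operator $a\mapsto\langle e_k,a\rangle_{\Hm}f_j$, right side the Bochner function above) and extend by linearity and continuity. Since $\{e_k\otimes f_j\}$ is an orthonormal basis of $\calB_2(\Hm;\calH)$ — this is exactly the standard Hilbert–Schmidt basis expansion, with $\langle e_k\otimes f_j,e_{k'}\otimes f_{j'}\rangle_{\HS}=\delta_{kk'}\delta_{jj'}$ — and $\{e_k\otimes f_j\}$ is an orthonormal basis of $\HmH$ by the previous paragraph, the map sending one orthonormal basis to the other extends to a unique unitary (isometric isomorphism) $\mathcal{J}\colon\calB_2(\Hm;\calH)\to\HmH$. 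Concretely it acts as $\mathcal{J}(G)=\sum_{k}e_k\otimes(Ge_k)$, and one checks $\|\mathcal{J}(G)\|_{H^m(\Omega;\calH)}^2=\sum_k\|Ge_k\|_{\calH}^2=\|G\|_{\HS}^2$, recovering isometry directly and confirming independence of the chosen bases a posteriori (a unitary mapping one ON basis to another is unique once defined on the basis, but independence of the defining bases should be remarked).

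Finally I would verify the rank-one normalization $\mathcal{J}(u\otimes v)(x)=u(x)v$ for arbitrary $u\in\Hm$, $v\in\calH$. Writing $u=\sum_k\langle u,e_k\rangle_{\Hm}e_k$ and $v=\sum_j\langle v,f_j\rangle_{\calH}f_j$, bilinearity of the tensor gives $u\otimes v=\sum_{k,j}\langle u,e_k\rangle_{\Hm}\langle v,f_j\rangle_{\calH}\,e_k\otimes f_j$ as a convergent sum in $\calB_2(\Hm;\calH)$; applying $\mathcal{J}$ and using continuity yields $\mathcal{J}(u\otimes v)=\sum_{k,j}\langle u,e_k\rangle_{\Hm}\langle v,f_j\rangle_{\calH}\,e_k\otimes f_j$, and evaluating this Bochner function at $x$ (the evaluation/series manipulation being justified by $L^2$- and $\calH$-convergence) gives $\big(\sum_k\langle u,e_k\rangle_{\Hm}e_k(x)\big)\big(\sum_j\langle v,f_j\rangle_{\calH}f_j\big)=u(x)v$. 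The one point deserving a remark is that ``$\mathcal{J}(u\otimes v)(x)=u(x)v$ for a.e.\ $x$'' is a statement about the equivalence class in $\HmH$, so I would phrase the identity at the level of $L^2(\Omega;\calH)$-representatives. The main obstacle is thus not any single deep step but the careful bookkeeping of the two tensor-product structures — on the operator side and on the Bochner side — and matching them; once the basis $\{e_k\otimes f_j\}$ is shown to be orthonormal and total in both spaces, the isomorphism and all its properties follow.
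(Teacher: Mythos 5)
Your proposal is correct and follows essentially the same route as the paper: fix orthonormal bases $\{e_i\}$ of $\Hm$ and $\{f_j\}$ of $\calH$, check that $\{e_i\otimes f_j\}$ is an orthonormal basis of both $\calB_2(\Hm;\calH)$ and $\HmH$, and take $\mathcal{J}$ to be the resulting unitary. The one place where the paper is more explicit than your plan is exactly the step you flag as needing care, namely that $F_j\eqdef\langle F(\cdot),f_j\rangle_\calH$ lies in $\Hm$ with $\operatorname{D}^\alpha F_j=\langle\operatorname{D}^\alpha F,f_j\rangle_\calH$ (equivalently, that the completeness/totality check can be reduced component-wise); the paper justifies this by commuting the Bochner integral in the weak-derivative identity with the pairing against $f_j$, invoking Bochner's theorem.
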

By virtue of this result, in the next sections we will always use the Bochner's notation $\HmH$ and any mention of the rank or nuclear norm of a function shall refer to the rank or nuclear norm of its associated Hilbert–Schmidt operator via this isomorphism. Similarly, with an abuse of notation, we shall denote $\mathcal{J}(u\otimes v)$ simply by $u\otimes v$.

We notice that whenever $\calH$ can be embedded into $L^2(\Omega)$, we can treat the functions in $\HmH$ as functions of $2d$ variables due to the embedding 
\begin{equation*}
    \HmH\hookrightarrow L^2(\Omega;\LD)\cong L^2(\Omega\times\Omega).
\end{equation*}

If $\calH\subseteq L^2(\Omega)$ is a suitable reproducing kernel Hilbert space \cite[Chapter~4]{steinwart-christmann-svm}, it is possible to restrict a function $F\in\HmH\hookrightarrow L^2(\Omega\times\Omega)$ to the diagonal of $\Omega\times\Omega$, see also \cite{BrislawnKernelsTraceClass} and \citet[Section~3]{muller_quantitative_2024}.
\begin{lemma}
   Assume that $\calH\subseteq L^2(\Omega)$ is a reproducing kernel Hilbert space with kernel $K\colon\Omega\times\Omega\to\RR$ satisfying
\begin{equation}\label{equation condition on K for restriction to diag}
    \sup\limits_{x\in\Omega}\|K_x\|_\calH<+\infty.
\end{equation} Then, the map
    \[
     C(\Omega;\calH) \to L^2(\Omega),\qquad F\mapsto  \restriction{F}{\operatorname{diag}(\Omega\times\Omega)} ,
    \]
    where $\restriction{F}{\operatorname{diag}(\Omega\times\Omega)}(x) = F(x,x)$, extends to a unique  bounded linear operator
    \begin{equation}\label{operator restriction to diag}
    \begin{aligned}
        \restriction{\cdot}{\operatorname{diag}(\Omega\times\Omega)}\colon \HmH&\to L^2(\Omega)\\
        F&\mapsto \restriction{F}{\operatorname{diag}(\Omega\times\Omega)}.
    \end{aligned}
\end{equation}
\end{lemma}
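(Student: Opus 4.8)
The plan is to show that the restriction-to-diagonal map is well-defined and bounded on rank-one tensors first, then extend by density and linearity. Concretely, given $u \in H^m(\Omega)$ and $v \in \calH$, the rank-one function $u\otimes v$ corresponds, via the isomorphism of \Cref{lemma isomorphism B2 and Bochner}, to the function $(x,y)\mapsto u(x)v(y)$. Since $\calH$ is a reproducing kernel Hilbert space with kernel $K$ satisfying \eqref{equation condition on K for restriction to diag}, the pointwise evaluation $v(x) = \langle v, K_x\rangle_\calH$ is well-defined for every $x$, and $|v(x)| \le \|v\|_\calH \|K_x\|_\calH \le C_K \|v\|_\calH$ with $C_K \eqdef \sup_{x\in\Omega}\|K_x\|_\calH < \infty$. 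Hence $\restriction{(u\otimes v)}{\operatorname{diag}}(x) = u(x)v(x)$ satisfies $\|u(\cdot)v(\cdot)\|_{L^2(\Omega)} \le C_K \|v\|_\calH \|u\|_{L^2(\Omega)} \le C_K \|u\|_{H^m(\Omega)}\|v\|_\calH$. This is the key estimate.

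First I would make the density argument precise: the isomorphism $\mathcal{J}$ sends finite-rank operators to finite linear combinations $\sum_{j} u_j \otimes v_j$ with $u_j \in H^m(\Omega)$, $v_j \in \calH$, and such functions are dense in $\HmH$ (this follows from separability of $\calH$ together with the density of finite-rank operators in $\calB_2$, used already in the excerpt). On such a finite combination $F = \sum_j u_j\otimes v_j$, the candidate restriction is $F(x,x) = \sum_j u_j(x) v_j(x)$; I would note this agrees with the naive restriction of the $C(\Omega;\calH)$-representative when $F$ happens to be continuous, so the map is a genuine extension of the one stated on $C(\Omega;\calH)$. To get the bound on a general finite combination, rather than summing the rank-one estimates (which would lose a factor depending on the decomposition), I would argue directly: write $F(x,x) = \langle F(x,\cdot), K_x\rangle_\calH$, where $F(x,\cdot) \in \calH$ is the value in the Bochner sense, so $|F(x,x)| \le C_K \|F(x,\cdot)\|_\calH$ and therefore $\|F(\cdot,\cdot)|_{\operatorname{diag}}\|_{L^2(\Omega)}^2 \le C_K^2 \int_\Omega \|F(x,\cdot)\|_\calH^2\,dx = C_K^2 \|F\|_{L^2(\Omega;\calH)}^2 \le C_K^2 \|F\|_{H^m(\Omega;\calH)}^2$. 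This bound holds on the dense subspace with a constant independent of $F$, so by the bounded linear extension theorem the map extends uniquely to a bounded operator $\HmH \to L^2(\Omega)$ of norm at most $C_K$, which is exactly \eqref{operator restriction to diag}.

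The one point requiring a little care — and the main obstacle — is justifying the identity $F(x,x) = \langle F(x,\cdot), K_x\rangle_\calH$ for a general $F \in \HmH$, i.e.\ that the bounded extension really does coincide with genuine diagonal evaluation and not merely with some abstract limit. For $F \in L^2(\Omega;\calH)$ one has the Bochner representative $x \mapsto F(x,\cdot) \in \calH$ defined a.e., and composing with the bounded functional $\langle\,\cdot\,, K_x\rangle_\calH$ gives a measurable scalar function of $x$; the content of the lemma is that this function lies in $L^2(\Omega)$ and depends continuously on $F$, which is precisely the estimate above. For $F$ in the dense subspace both descriptions manifestly agree with $\sum_j u_j(x)v_j(x)$, and since both the abstract extension and the map $F \mapsto (x\mapsto \langle F(x,\cdot),K_x\rangle_\calH)$ are bounded $\HmH \to L^2(\Omega)$ and agree on a dense set, they coincide everywhere. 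I would also remark that the embedding $\HmH \hookrightarrow L^2(\Omega\times\Omega)$ mentioned just before the lemma identifies $F$ with a genuine function of $2d$ variables, so "restriction to the diagonal" is literally the restriction of (an $L^2(\Omega\times\Omega)$-equivalence class of) a function — the role of \eqref{equation condition on K for restriction to diag} is exactly to make this a.e.-defined restriction into a bounded operation into $L^2(\Omega)$.
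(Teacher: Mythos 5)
Your proof is correct and takes essentially the same approach as the paper: both rest on the key pointwise estimate $\lvert F(x,x)\rvert = \lvert\langle F(x,\cdot), K_x\rangle_\calH\rvert \le C_K\|F(x,\cdot)\|_\calH$, followed by integration over $\Omega$ and a density argument. The initial detour through rank-one tensors and the closing discussion of genuine pointwise evaluation are superfluous for what the lemma asks; the paper simply runs the estimate on $C(\Omega;\calH)$ directly and concludes by its density in $H^m(\Omega;\calH)$, rather than passing through finite-rank combinations.
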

\begin{proof}
    It is enough to observe that for $F\in C(\Omega;\calH) \hookrightarrow L^2(\Omega\times\Omega)$ we have
    \begin{equation*}
    \lvert F(x,x)\rvert= \lvert\langle F(x),K_x\rangle_\calH\rvert\leq C \|F(x)\|_\calH, \qquad  x\in\Omega,
\end{equation*}
where $C=\sup_{x\in\Omega}\|K_x\|_\calH$. Then
\[
\|\restriction{F}{\operatorname{diag}(\Omega\times\Omega)}\|_{L^2(\Omega)}^2 \le C^2\int_\Omega \|F(x)\|_\calH^2\,dx =C^2 \|F\|_{L^2(\Omega;\calH)}^2 \le C^2 \|F\|_{\HmH}^2.
\]
Finally, the result follows by the density of $C(\Omega;\calH)$ into $\HmH$.
\end{proof}   
\section{The Calder\'on problem}\label{sec_calderon}
In this section, we investigate the application of the techniques introduced above to the Calder\'on problem.
\subsection{Problem formulation}
Let $\Omega\subset\RR^d$ ($d\geq 2$) be a bounded Lipschitz domain and $\calW$ be a finite-dimensional subspace of $C(\overline\Omega)$. The assumption $\calW\subset C(\overline{\Omega})$ allows us to see $\calW$ as a reproducing kernel Hilbert space when endowed with the $L^2$-norm. In particular, if $\omega_1,\dots,\omega_m$ form an orthonormal basis for $\calW$, then the kernel is given by
\begin{equation}\label{equation expression of K for calW}
    K=\sum\limits_{i=1}^m \omega_i\otimes\omega_i.
\end{equation}
Throughout this section, we denote the $L^2$ norm on $\calW$ by $\|\cdot\|_\calW$.

In order to obtain a well-defined Dirichlet-to-Neumann map 
\begin{equation}\label{equation DN map expression}
\begin{aligned}
    \Lambda_{q^\dagger}\colon H^{1/2}(\partial\Omega)&\to H^{-1/2}(\partial\Omega)\\
    f&\mapsto \restriction{\partial_\nu u^\dagger}{\partial\Omega},
\end{aligned}
\end{equation}
where $u^\dagger\in H^1(\Omega)$ is the unique weak solution to
\begin{equation}\left\{
    \begin{aligned}
        -\Delta u^\dagger +q^\dagger u^\dagger&=0&&\mathrm{in}~\Omega,\\
        u^\dagger&=f&&\mathrm{on}~\partial\Omega,
    \end{aligned}\right.
    \label{eq_calderon}
\end{equation}
we assume in the following that the unknown potential $q^\dagger\in\calW$ is such that $0$ is not an eigenvalue of $-\Delta+q^\dagger$ (see for example \citep[Theorem 2.63]{feldman2019calderon}). This is always the case if $q^\dagger\ge 0$ or whenever $q^\dagger=\Delta\sqrt{\sigma^\dagger}/\sqrt{\sigma^\dagger}$ for some unknown conductivity $\sigma^\dagger\in L^{\infty}_+(\Omega)$ (this corresponds to the classical reduction discussed in \Cref{sub:eit_calderon}). Our aim is to recover $q^\dagger$ from the knowledge of $(\Lambda_{q^\dagger}f_i)_{1\leq i\leq N}$ for finitely many Dirichlet data $(f_i)_{1\leq i\leq N}$. 

Let $\{f_i\}_{i\in\NN}$ be a fixed  orthonormal basis of $H^{1/2}(\partial\Omega)$, with $f_i$ bounded for every $i$. In the following, we use $f_i$ ($i=\toN$) as Dirichlet boundary data and we denote  the unique weak solution of \eqref{eq_calderon} with $f=f_i$ by $u_i^\dagger\in H^1(\Omega)$. 

\subsection{Lifting}
We propose to lift the inverse problem by considering a new unknown. In the current setting, we have multiple PDEs to take into account (one for each boundary datum). We therefore introduce a new unknown for each PDE and lift the problem by considering
\begin{equation*}
    F^\dagger_i\eqdef u_i^\dagger\otimes q^\dagger\in\HUW,\qquad i=\toN.
\end{equation*}
Heuristically, this corresponds to
\[
F_i^\dagger(x,y)=u_i^\dagger(x)q^\dagger(y),\qquad \text{a.e.\ } (x,y)\in \Omega\times\Omega.
\]

By \Cref{equation expression of K for calW}, we notice that $K$ satisfies \Cref{equation condition on K for restriction to diag}. As a result, one can define $\restriction{F}{\diag}\in\LD$ for every ${F\in \HUW}$. With this change of variable, we get $u_i^\dagger=u_{F_i^\dagger}$, where, for every ${F=(F_i)_{1\leq i\leq N}\in\HUW^N}$, we denote by $u_{F_i}\in H^1(\Omega)$ the unique weak solution to
\begin{equation}\left\{\label{eq solved by u_G_i infinite}
    \begin{aligned}
        \Delta u_{F_i} &=\restriction{F_i}{\diag}&&\mathrm{in}~\Omega,\\
        u_{F_i}&=f_i&&\mathrm{on}~\partial\Omega.
    \end{aligned}\right.
\end{equation}

The main interest of the lifting is that the map $F_i\mapsto \restriction{\partial_{\nu} u_{F_i}}{\partial\Omega}$ is \emph{affine}, while $q^\dagger\mapsto \restriction{\partial_{\nu} u_i^\dagger}{\partial\Omega}$ is nonlinear. To obtain a linear mapping from the unknown to the observations, we define, for every $F_i\in\HUW$, the function $v_{F_i}\eqdef u_{F_i}-\tilde{f_i}\in H^1_0(\Omega)$, where $\tilde{f_i}\in H^1(\Omega)$ is the harmonic extension of $f_i\in H^{1/2}(\partial\Omega)$. In particular, since $u_{F^\dagger_i}=u^\dagger_i$, it holds that $\restriction{\partial_\nu v_{F^\dagger_i}}{\partial\Omega}=\restriction{\partial_\nu u^\dagger_{i}}{\partial\Omega}-\restriction{\partial_\nu \tilde{f_i}}{\partial\Omega}$.

Now we would like to find a bounded linear operator $\Phi$ from the set of lifted variables $H^1(\Omega;\calW)^N$ to a separable Hilbert space $\calH$ and a vector $z\in\calH$ depending only on the available measurements $(\restriction{\partial_\nu u^\dagger_{i}}{\partial\Omega})_{1\leq i\leq N}$ such that $F^\dagger=(F_i^\dagger)_{1\leq i\leq N}$ is the unique solution to  
\begin{equation}
    \mathrm{find}~F\in\HUW^N~\mathrm{s.t.}~\Phi F=\m~\mathrm{and}~\mathrm{rank}(F_i)=1~\mbox{ for every }~i=\toN.
        \tag{$\mathcal{P}_{\mathrm{lifted}}$}
        \label{pb_lifted_calderon_infinite_measurements}
\end{equation}
We construct such an operator $\Phi:F\mapsto (\Phi_1 F,\Phi_2 F,\Phi_3 F)$ and a vector $z=(z_1,z_2,z_3)$ in three steps below. The first component $\Phi_1$ enforces the constraint that $F$ yields the same boundary measurements as $F^\dagger$. The second component enforces the constraint that $F_i(\cdot,y)$ must be proportional to $u_{F_i}$ for every $1\leq i\leq N$. Finally, the third component enforces the constraint that the functions $F_i$ have the same component along their last $d$ variables.

\paragraph{Consistency with the measurements.} In order to take into account the Neumann data, we define
\begin{equation*}
    \begin{aligned}
        \Phi_1 \colon \HUW^N &\to H^{-1/2}(\partial\Omega)^N \\
        F &\mapsto (\restriction{\partial_\nu v_{F_i}}{\partial\Omega})_{1\leq i\leq N},
    \end{aligned}
\end{equation*}
together with the vector $\m_1\eqdef \Phi_1F^\dagger=(\restriction{\partial_\nu v_{F^\dagger_i}}{\partial\Omega})_{1\leq i\leq N}$.

\paragraph{Proportionality to the solution of the PDE.} Now, we leverage the information that $F^\dagger_i(\cdot,y)=q^\dagger(y)u_i^\dagger$ is proportional to $u_i^\dagger$ for every $i=\toN$. In this regard, assuming we know $\int_\Omega q^\dagger\neq 0$, we impose, for every $i=\toN$, that 
\begin{equation*}
    \int_\Omega F_i(\cdot,y)dy = \bigg[\int_\Omega q^\dagger\bigg] u_{F_i}.
    \end{equation*}
This is equivalent to imposing that $\Phi_2 F=z_2$, where $\Phi_2$ is the bounded linear operator defined by
\begin{equation*}
    \begin{aligned}
        \Phi_2 \colon \HUW^N &\to H^{1}(\Omega)^N \\
        F &\mapsto \bigg(\int_\Omega F_i(\cdot,y)dy - \bigg[\int_\Omega q^\dagger\bigg] v_{F_i}\bigg)_{1\leq i\leq N}
    \end{aligned}
\end{equation*} 
and $\m_2\eqdef \Phi_2 F^\dagger=([\int_\Omega q^\dagger]\Tilde{f_i})_{1\leq i\leq N}$.
\begin{remark}\label{remark int on K}
It is worth observing that all the analysis presented in this section would work also if we replaced the quantity $\int_\Omega q^\dagger$ by $g(q^\dagger)$ for any continuous linear functional $g$. For instance, two alternatives could be $g(q^\dagger)=\int_K q^\dagger$ for some small subdomain $K\subset\Omega$ or $g(q^\dagger)=q^\dagger(y_0)$ for some $y_0\in\overline{\Omega}$. The choice of the integral on $\Omega$ has been made for uniformity with the setting of \Cref{sec_internal_measurements}, where this leads to simpler calculations. 
\end{remark}
\paragraph{Equality along the last $d$ variables.} The third constraint we impose leverages the fact that the unknowns $F_i^\dagger$ have the same component $q^\dagger$ along their last $d$ variables. To achieve this, we rely on the existence of a trace in $L^2(\partial\Omega;\calW)$ for every function in~$\HUW$ (see \citet[Theorem 7.11]{arendt2016mapping}) and, for every $1\leq i\leq N-1$ and $j>i$, we impose the following identity in $\calW$:
\begin{equation*}
    f_j(x)F_i(x,\cdot)=f_i(x)F_j(x,\cdot),\qquad \text{a.e.\ } x\in\partial\Omega.
\end{equation*}
This leads us to define the bounded linear operator
\begin{equation*}
    \begin{aligned}
        \Phi_3 \colon \HUW^N &\to L^2(\partial\Omega;\calW)^{\binom{N}{2}} \\
        F &\mapsto (\Phi_3^{i,j}(F))_{1\leq i\leq N-1,j>i},
    \end{aligned}
\end{equation*}
where $(\Phi_3^{i,j}F)(x)= f_j(x)F_i(x)-f_i(x)F_j(x)$.
Since $u_i^\dagger=f_i$ on $\partial\Omega$, we see that
\begin{equation*}
    (\Phi_3^{i,j}F^\dagger)(x)=f_j(x)F^\dagger_i(x)-f_i(x)F^\dagger_j(x)=f_j(x)f_i(x)q^\dagger-f_i(x)f_j(x)q^\dagger=0,
\end{equation*}
so that $\m_3\eqdef \Phi_3 F^\dagger=0$.

Now, combining the three operators defined above, we obtain the bounded linear operator
\begin{equation*}
        \Phi \colon \HUW^N \to \calH,\qquad 
        \Phi(F) = (
            \Phi_1 F,
            \Phi_2 F,
            \Phi_3 F),
\end{equation*}
where $\calH\eqdef \calH_1\times\calH_2\times\calH_3$ with $\calH_1\eqdef H^{-1/2}(\partial\Omega)^N$, $\calH_2\eqdef H^1(\Omega)^N$ and $\calH_3\eqdef L^2(\partial\Omega;\mathcal{W})^{\binom{N}{2}}$. The measurements associated to the unknown $F^\dagger$ are
\begin{equation*}
    \m\eqdef \Phi F^\dagger=\bigg[\bigg(\partial_\nu \restriction{v_{F^\dagger_i}}{\partial\Omega}\bigg)_{1\leq i\leq N},\bigg(\bigg[\int_\Omega q^\dagger\bigg]\Tilde{f_i}\bigg)_{1\leq i\leq N}, 0\bigg].
\end{equation*}

Our first result aims at showing that $F^\dagger$ is the only solution to \Cref{pb_lifted_calderon_infinite_measurements}. As shown in the proof of the next proposition, this result heavily depends on identifiability results for the Calder\'on problem.  
To prove the result in our setting, we would need an identifiability result in the case of a finite number of samples from the Cauchy data, that is to say pairs of Dirichlet and Neumann boundary values (see for instance \citet[Definition 2.68]{feldman2019calderon}). We are not aware of such a result in the literature, and the available results for the Dirichlet to Neumann map \cite{harrach2019uniqueness,alberti2022inverse,alberti2022infinite} cannot be easily extended to this setting. Still, the result can be proved in the case where infinitely many measurements are available, namely when we observe the Neumann data associated to the whole basis $\{f_i\}_{i\in\NN}$. This is the object of the proposition below, which can be seen as a first consistency check for the lifting we propose.

\begin{proposition}\label{proposition equivalence original prob and rank min prob infinite}
Let $(F_i)_{i\in\NN}$ be such that $F_i\in\HUW$, $\operatorname{rank}(F_i)=1$ and 
\begin{empheq}[left = \empheqlbrace]{align*}
        &\restriction{\partial_\nu v_{F_i}}{\partial\Omega} = \restriction{\partial_\nu v_{F^\dagger_i}}{\partial\Omega}, \\
        &\int_\Omega F_i(\cdot,y)dy=\bigg[\int_\Omega q^\dagger\bigg]u_{F_i}, \\
        &f_j(x)F_i(x)=
        f_i(x)F_j(x), \qquad \mathrm{for~a.e.\ }x\in\partial\Omega,
\end{empheq} 
for every $i\in\NN$ and $j>i$. Then $F_i=u_i^\dagger\otimes q^\dagger$ for every $i\in\NN$.
\end{proposition}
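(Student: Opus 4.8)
The plan is to exploit the rank-one hypothesis to attach to each $F_i$ a potential $q_i\in\calW$ such that $u_{F_i}$ solves the Schr\"odinger equation \eqref{eq_calderon} with Dirichlet datum $f_i$ and has the same Neumann trace as $u_i^\dagger$, then to use the third constraint to collapse all the $q_i$ into a single potential $q$, and finally to invoke the global uniqueness theorem for the Calder\'on problem to conclude $q=q^\dagger$, from which $u_{F_i}=u_i^\dagger$ follows. To begin with, since $\operatorname{rank}(F_i)=1$, the isomorphism of \Cref{lemma isomorphism B2 and Bochner} lets us write $F_i=g_i\otimes h_i$ with $g_i\in H^1(\Omega)\setminus\{0\}$ and $h_i\in\calW\setminus\{0\}$, i.e.\ informally $F_i(x,y)=g_i(x)h_i(y)$. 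Integrating in the last variable and inserting this in the second constraint gives $\bigl(\int_\Omega h_i\bigr)g_i=\bigl[\int_\Omega q^\dagger\bigr]u_{F_i}$; since $u_{F_i}=f_i\neq 0$ on $\partial\Omega$ we have $u_{F_i}\neq 0$, so $\int_\Omega h_i\neq 0$ and $g_i$ is a nonzero multiple of $u_{F_i}$. Absorbing the scalar into the second factor, $F_i=u_{F_i}\otimes q_i$ for some $q_i\in\calW\setminus\{0\}$.

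With this factorisation, the diagonal-restriction operator \eqref{operator restriction to diag} yields $\restriction{F_i}{\diag}=u_{F_i}\,q_i$ a.e.\ in $\Omega$, so \eqref{eq solved by u_G_i infinite} becomes $-\Delta u_{F_i}+q_iu_{F_i}=0$ in $\Omega$, $u_{F_i}=f_i$ on $\partial\Omega$; that is, $u_{F_i}$ is an $H^1$ solution of \eqref{eq_calderon} with potential $q_i$ and datum $f_i$. Since $v_{F_i}-v_{F^\dagger_i}=u_{F_i}-u_i^\dagger$, the first constraint is exactly $\restriction{\partial_\nu u_{F_i}}{\partial\Omega}=\restriction{\partial_\nu u_i^\dagger}{\partial\Omega}=\Lambda_{q^\dagger}f_i$, so the pair $(f_i,\Lambda_{q^\dagger}f_i)$ belongs to the Cauchy data set of $q_i$. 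Finally, the trace of $F_i=u_{F_i}\otimes q_i$ on $\partial\Omega$ equals $f_i\otimes q_i$, so the third constraint reads $f_i(x)f_j(x)(q_i-q_j)=0$ for a.e.\ $x\in\partial\Omega$; since $q_i-q_j$ does not depend on $x$, this forces $q_i=q_j$ whenever $f_if_j$ is not a.e.\ zero on $\partial\Omega$. As $\{f_i\}$ is an orthonormal basis of $H^{1/2}(\partial\Omega)$, the $f_i$ cannot be split into two subfamilies with essentially disjoint supports --- otherwise the indicator of the union of the supports of one subfamily would belong to $H^{1/2}(\partial\Omega)$ --- so a connectedness argument yields a single $q\in\calW\setminus\{0\}$ with $q_i=q$ for all $i$.

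It then remains to identify $q$. By the previous step, $(f_i,\Lambda_{q^\dagger}f_i)$ lies in the Cauchy data set of $q$ for every $i$, and since $\{f_i\}_{i\in\NN}$ is total in $H^{1/2}(\partial\Omega)$ and $\Lambda_{q^\dagger}$ is bounded, that set contains the whole graph of $\Lambda_{q^\dagger}$; in particular $0\notin\operatorname{spec}(-\Delta+q)$ and $\Lambda_q=\Lambda_{q^\dagger}$. By the global uniqueness theorems for the Schr\"odinger form of the Calder\'on problem (\cite{sylvester1987global} for $d\geq 3$, \cite{Bukhgeim+2008+19+33,nachman1996global} for $d=2$) we obtain $q=q^\dagger$. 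Then $u_{F_i}$ solves \eqref{eq_calderon} with $f=f_i$, which by the standing assumption $0\notin\operatorname{spec}(-\Delta+q^\dagger)$ has the unique solution $u_i^\dagger$; hence $u_{F_i}=u_i^\dagger$ and $F_i=u_{F_i}\otimes q_i=u_i^\dagger\otimes q^\dagger$, as claimed.

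The main obstacle is the last step: one must convert the pointwise equality of Neumann traces into an equality of Dirichlet-to-Neumann maps and then rely on the deep, intrinsically global identifiability results --- which is precisely why the proposition is limited to infinitely many measurements, the finite-data analogues of \cite{harrach2019uniqueness,alberti2022inverse,alberti2022infinite} not being available for samples of the Cauchy data. The remaining points --- the rigorous extraction of the rank-one factorisation and the reduction to a Schr\"odinger equation through the Bochner trace theorem and the diagonal-restriction operator, together with the collapse of the $q_i$ (which requires some care with the supports of the $f_i$) --- are more technical but routine.
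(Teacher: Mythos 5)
Your argument follows the paper's proof essentially step by step: factorize $F_i=g_i\otimes h_i$, use $\Phi_2F_i=\m_2$ and $u_{F_i}=f_i\neq0$ on $\partial\Omega$ to see that $\int_\Omega h_i\neq0$ and that $g_i$ is a nonzero multiple of $u_{F_i}$, rewrite $F_i=u_{F_i}\otimes q_i$, reduce to the Schr\"odinger equation via the diagonal restriction, use $\Phi_3$ to collapse the $q_i$ into a single $q$, argue that the boundary-value problem for $-\Delta+q$ is well posed, and conclude with global identifiability. Two steps in your write-up deserve a closer look.

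First, when you claim that the Cauchy data set containing the whole graph of $\Lambda_{q^\dagger}$ gives ``in particular $0\notin\operatorname{spec}(-\Delta+q)$'', you are implicitly invoking the Fredholm alternative (solvability of $-\Delta w+qw=0$, $w=f$, for every $f$ forces $\langle\partial_\nu w_j,f\rangle=0$ for any Dirichlet eigenfunction $w_j$ of $-\Delta+q$, hence $\restriction{\partial_\nu w_j}{\partial\Omega}=0$) together with the unique continuation principle (to get $w_j=0$ from vanishing Cauchy data). These ingredients are needed for the deduction you label ``in particular''; the paper spells them out, and you should too, as the claim is not immediate.

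Second, the collapse of the $q_i$ to a single $q$ hinges on showing that, for each pair $i,j$, either $q_i=q_j$ or $f_if_j=0$ a.e.\ on $\partial\Omega$, and then ruling out the degenerate splitting of $\{f_i\}$ into subfamilies with essentially disjoint supports. Your ``connectedness argument'' (project $\mathbbm 1_{\partial\Omega}\in H^{1/2}(\partial\Omega)$ onto the span of one subfamily to produce $\mathbbm 1_A\in H^{1/2}$) does show $\mathbbm 1_A\in H^{1/2}(\partial\Omega)$, but that alone is not a contradiction: if $\partial\Omega$ is disconnected (e.g.\ $\Omega$ an annulus), indicators of unions of connected components are smooth and lie in $H^{1/2}$. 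So the heuristic does not close the gap for general Lipschitz domains. For what it is worth, the paper's own proof states the equality $b_i/\int_\Omega b_i=b_j/\int_\Omega b_j$ follows ``using \eqref{eq: phi2=b2 infinite} and \eqref{eq: phi3=b3 infinite}'' without making this support assumption explicit either, so you have correctly identified a subtle point; but your patch as written does not fully repair it. A clean fix is to add the (harmless) assumption, used elsewhere in the paper, that some $f_{i_0}$ is bounded away from zero, which forces $f_{i_0}f_j\not\equiv0$ for every $j$ and immediately yields $q_j=q_{i_0}$ for all $j$.

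Apart from these two points, the proof is correct and follows the same route as the paper's.
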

\begin{proof}
   For every $i\in\NN$ we set $F_i=a_i\otimes b_i$ with $a_i\in H^1(\Omega)$ and $b_i\in\mathcal{W}$. Now, the set of constraints gives us that
    \begin{empheq}[left = \empheqlbrace]{align}\label{eq: phi1=b1 infinite}
        &\restriction{\partial_\nu u_{F_i}}{\partial\Omega} = \restriction{\partial_\nu u_i^\dagger}{\partial\Omega}, \\
        \label{eq: phi2=b2 infinite}
        &\bigg[\int_\Omega b_i\bigg]a_i=\bigg[\int_\Omega q^\dagger\bigg]u_{F_i}, \\
        \label{eq: phi3=b3 infinite}
        &f_j(x)a_i(x)b_i(y)=f_i(x)a_j(x)b_j(y), \qquad \mathrm{for~a.e.~}(x,y)\in\partial\Omega\times\Omega.
    \end{empheq} 
     We stress that $\int_\Omega b_i\neq 0$ for every $i\in\NN$. Indeed, if $\int_\Omega b_i=0$, since $\int_\Omega q^\dagger\neq0$, we would obtain $u_{F_i}=0$, which contradicts $\restriction{u_{F_i}}{\partial\Omega}=f_i\neq0$.
    Hence, using \Cref{eq: phi2=b2 infinite} and \Cref{eq: phi3=b3 infinite}, we obtain, for every $i\in\NN$ and $j>i$, that
    \begin{equation}\label{expression for G^2 infinite}
        \frac{1}{\int_\Omega b_i} b_i = \frac{1}{\int_\Omega b_j} b_j \eqdef b.
    \end{equation}
    By \Cref{eq solved by u_G_i infinite}, \Cref{eq: phi2=b2 infinite} and \Cref{expression for G^2 infinite}, we have that
    \begin{equation}\label{PDE for u_G_i in prop rank min infinite}
        -\Delta u_{F_i}+\bigg[\int_\Omega q^\dagger\bigg] bu_{F_i}=0\qquad \text{in $\Omega$}
    \end{equation}
    for every $i\in\NN$. Since $\restriction{u_{F_i}}{\partial\Omega}=f_i$ for every $i\in\NN$ and $\{f_i\}_{i\in\NN}$ is an orthonormal basis of $H^{1/2}(\partial\Omega)$, we can use the Fredholm alternative (see for example \citet[Section 6.2.3]{evans2022partial}) to conclude that $0$ is not an eigenvalue of $-\Delta+[\int_\Omega q^\dagger] b$ with Dirichlet boundary conditions. Indeed, if $\{w_1,\dots,w_n\}$ is a basis of $\operatorname{Ker}(-\Delta+[\int_\Omega q^\dagger] b)$, the Fredholm alternative ensures that $\langle \partial_\nu w_j,f_i\rangle_{\partial\Omega}=0$ for every $i\in\NN$ and $j=1,\dots,n$. This shows that $\restriction{\partial_\nu w_j}{\partial\Omega}=0$ for every $j=1,\dots,n$. Using the unique continuation principle \citep{tataru1999carleman}, we conclude that $w_j=0$ for every $j=1,\dots,n$.
    
    As a result, in view of \eqref{eq: phi1=b1 infinite}, relying on the identifiability of $L^\infty(\Omega)$ potentials with infinitely many measurements (see for instance \citet{sylvester1987global} for $d\ge3$ and \citet{Bukhgeim+2008+19+33} for $d=2$), we obtain that 
    $$
    q^\dagger=\bigg[\int_\Omega q^\dagger\bigg] b= \frac{\int_\Omega q^\dagger}{\int_\Omega b_i}b_i
    $$
    for every $i\in \NN$. This in turn yields
    \begin{equation}\label{expression for G_i^2 infinite}
    b_i=\frac{\int_\Omega b_i}{\int_\Omega q^\dagger}q^\dagger
    \end{equation}
    for every $i\in\NN$. As a result, using \Cref{expression for G^2 infinite} and \Cref{PDE for u_G_i in prop rank min infinite}, we obtain that $u_{F_i}=u_i^\dagger$ for every $i\in\NN$, which in turn yields
\begin{equation}\label{expression for G_i^1 infinite}
a_i = \frac{\int_\Omega q^\dagger}{\int_\Omega b_i}u_i^\dagger.
\end{equation}
Finally, using \Cref{expression for G_i^2 infinite} and \Cref{expression for G_i^1 infinite}, we obtain
\(
F_i = a_i\otimes b_i = u_i^\dagger \otimes q^\dagger .
\)
\end{proof}

\subsection{Convex relaxation}\label{sec_convex_relaxation_calderon}
Let $\sigma_i=\|u_i^\dagger\|_{H^1(\Omega)}\|q^\dagger\|_\calW>0$ ($1\leq i\leq N$). From now on, we denote the normalized versions of $q^\dagger$ and $(u_i^\dagger)_{1\leq i\leq N}$ by $q$ and $(u_i)_{1\leq i\leq N}$, namely $q = q^\dagger/\|q^\dagger\|_\calW$ and $u_i = u_i^\dagger/\|u_i^\dagger\|_{H^1(\Omega)}$, and write $F^\dagger_i=\sigma_i u_i\otimes q$.

In order to obtain a convex minimization problem from \Cref{pb_lifted_calderon_infinite_measurements}, we use the nuclear norm as a convex proxy for the rank. As in Section~\ref{subsec_nuclear_norm_min_op}, we propose to relax the rank-one constraint on the $(F_i)_{1\leq i\leq N}$ by minimizing the sum of their nuclear norms, which is a proper convex lower semi-continuous function. This leads to the following convex relaxation of~\Cref{pb_lifted_calderon_infinite_measurements}:
\begin{equation}
    \underset{F\in \HUW^N}{\min}~\sum\limits_{i=1}^N \|F_i\|_*~~\mathrm{s.t.}~~\Phi F=\m.
    \tag{$\mathcal{P}_{\mathrm{relaxed}}$}
    \label{primal_calderon}
\end{equation}

\subsubsection{Exact and robust recovery}
\label{subsec_recovery_calderon}
Relying on the results of \Cref{subsec_nuclear_norm_min_op}, we can show that a non-degenerate source condition implies an exact recovery result for the unknown $F^\dagger$.
\begin{proposition}\label{proposition exact recovery calderon infinite}
    If there exists $N\in\NN$ such that the non-degenerate source condition
        \begin{equation}
        \begin{aligned}
           &\mbox{there exists}~H\in\operatorname{Im}(\Phi^*)~\mbox{s.t.}~\mathrm{for~every}~1\leq i\leq N,~
           H_i=u_i\otimes q+W_i\\&\mathrm{with}~W_iu_i=0,~W_i^*q=0~\mathrm{and}~\|W_i\|<1
           \end{aligned}
        \tag{NDSC}
        \label{ndsc_calderon}
    \end{equation}
    holds, then $F^\dagger=(\sigma_i u_i\otimes q)_{1\leq i\leq N}=(u^\dagger_i\otimes q^\dagger)_{1\leq i\leq N}$ is the unique solution to \eqref{primal_calderon}. Moreover, $q^\dagger$ is uniquely determined by $F_i^\dagger$ via the identity
    \begin{equation*}
        q^\dagger=\frac{1}{\int_{\partial\Omega} \lvert f_i\rvert^2}\int_{\partial\Omega}\restriction{F_i^\dagger}{\partial\Omega}(x,\cdot)f_i(x)dx.
    \end{equation*}
\end{proposition}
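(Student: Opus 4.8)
The plan is to deduce the first assertion from Proposition~\ref{prop_exact_recovery_op}, applied with $\calH_1=H^1(\Omega)$, $\calH_2=\calW$, lifted unknown $F^\dagger=(\sigma_i u_i\otimes q)_{1\le i\le N}$, measurement operator $\Phi=(\Phi_1,\Phi_2,\Phi_3)$ and data $\m$ constructed above; here one uses Lemma~\ref{lemma isomorphism B2 and Bochner} to identify $\HUW$ with $\calB_2(H^1(\Omega);\calW)$, so that the $\sigma_i u_i\otimes q$ are genuine rank-one Hilbert--Schmidt operators with $\|u_i\|_{H^1(\Omega)}=\|q\|_\calW=1$. With this identification, \eqref{ndsc_calderon} is exactly the condition \eqref{ndsc_op} for the present data, which is assumed to hold, and \eqref{primal_calderon} is the problem \eqref{primal_op}. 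It therefore only remains to verify that $\Phi$ is injective on the cone $\mathcal{C}=\prod_{1\le i\le N}\{\alpha_i u_i\otimes q:\alpha_i\ge0\}$, after which Proposition~\ref{prop_exact_recovery_op} gives that $F^\dagger$ is the unique solution of \eqref{primal_calderon}; the identity $\sigma_i u_i\otimes q=u_i^\dagger\otimes q^\dagger$ is immediate from the normalizations $u_i=u_i^\dagger/\|u_i^\dagger\|_{H^1(\Omega)}$, $q=q^\dagger/\|q^\dagger\|_\calW$ and $\sigma_i=\|u_i^\dagger\|_{H^1(\Omega)}\|q^\dagger\|_\calW$.

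For the injectivity on $\mathcal{C}$ I would use only the block $\Phi_2$. Since $F_i\mapsto v_{F_i}$ is linear (subtracting the fixed harmonic extension $\tilde f_i$ reduces \eqref{eq solved by u_G_i infinite} to the Poisson solve $\Delta v_{F_i}=\restriction{F_i}{\diag}$ in $\Omega$, $v_{F_i}=0$ on $\partial\Omega$, whose solution depends linearly on $\restriction{F_i}{\diag}$), for $F=(\alpha_i u_i\otimes q)_{1\le i\le N}\in\mathcal{C}$ one computes
\begin{equation*}
(\Phi_2 F)_i=\alpha_i\,\xi_i,\qquad \xi_i\eqdef\Big(\int_\Omega q\Big)u_i-\Big(\int_\Omega q^\dagger\Big)v_{u_i\otimes q}\ \in H^1(\Omega).
\end{equation*}
In particular $\m_{2,i}=(\Phi_2 F^\dagger)_i=\sigma_i\xi_i=\big(\int_\Omega q^\dagger\big)\tilde f_i$, which is nonzero because $\int_\Omega q^\dagger\neq0$ (a standing assumption in this section; by Remark~\ref{remark int on K} one only needs $g(q^\dagger)\neq0$ for the chosen functional $g$) and $f_i\neq0$ (since $\{f_i\}_{i\in\NN}$ is an orthonormal basis of $H^{1/2}(\partial\Omega)$). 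Hence $\xi_i\neq0$, so $(\Phi_2 F)_i$ determines $\alpha_i$: this gives injectivity of $\Phi$ on $\mathcal{C}$, and moreover the constraint $\Phi F=\m$ forces $\alpha_i=\sigma_i$ for every $i$, i.e.\ $F=F^\dagger$. (Alternatively, $\xi_i\neq0$ is seen directly by taking boundary traces: $v_{u_i\otimes q}$ vanishes on $\partial\Omega$ while $\restriction{u_i}{\partial\Omega}=f_i/\|u_i^\dagger\|_{H^1(\Omega)}$, so $\restriction{\xi_i}{\partial\Omega}$ is a nonzero multiple of $f_i$.)

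For the last assertion, recall that, viewed as a function of $2d$ variables through the embedding $\calW\hookrightarrow L^2(\Omega)$, one has $F_i^\dagger(x,y)=u_i^\dagger(x)q^\dagger(y)$. Its trace on $\partial\Omega$ exists in $L^2(\partial\Omega;\calW)$ by the trace theorem for $\calW$-valued Sobolev functions, and for a rank-one function this trace is the tensor product of the traces (by density of smooth functions and continuity of the trace operator), so $\restriction{F_i^\dagger}{\partial\Omega}(x,\cdot)=(\restriction{u_i^\dagger}{\partial\Omega})(x)\,q^\dagger=f_i(x)\,q^\dagger$ for a.e.\ $x\in\partial\Omega$. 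Multiplying by $f_i(x)$ and integrating over $\partial\Omega$ yields $\int_{\partial\Omega}\restriction{F_i^\dagger}{\partial\Omega}(x,\cdot)f_i(x)\,dx=\big(\int_{\partial\Omega}|f_i|^2\big)q^\dagger$, and $\int_{\partial\Omega}|f_i|^2>0$ since $f_i\neq0$; dividing gives the stated formula.

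I do not expect a genuine obstacle here: the depth of the statement is entirely contained in the assumed \eqref{ndsc_calderon}, and the rest is a verification. The points requiring some care are (i) the bookkeeping of the identification $\HUW\cong\calB_2(H^1(\Omega);\calW)$ so that $\Phi$, $F^\dagger$ and $\mathcal{C}$ fit the framework of Section~\ref{sec_nuc_norm_min_for_rank-one_op}, and (ii) the boundary-trace computations used both for the injectivity on $\mathcal{C}$ and for the recovery formula --- handling the $\calW$-valued trace of $F_i^\dagger$ and the trace of $\xi_i$ rigorously, and keeping track of the standing assumption $\int_\Omega q^\dagger\neq0$.
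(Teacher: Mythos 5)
Your proof is correct, and for the injectivity of $\Phi$ on the cone $\mathcal{C}$ it takes a genuinely different and cleaner route than the paper. The paper argues that $\Phi_1 F=0$ forces $v_{\alpha_i u_i\otimes q}=0$ by the unique continuation principle, and then uses $\Phi_2$ to conclude $\alpha_i u_i=0$. You instead ignore $\Phi_1$ entirely: exploiting linearity of $F\mapsto v_F$, you compute $\Phi_2^i F=\alpha_i\xi_i$ with $\xi_i=(\int_\Omega q)u_i-(\int_\Omega q^\dagger)v_{u_i\otimes q}$, and observe $\xi_i\neq0$ either because $\sigma_i\xi_i=\m_{2,i}=(\int_\Omega q^\dagger)\tilde f_i\neq0$, or directly because $\restriction{\xi_i}{\partial\Omega}=(\int_\Omega q)\restriction{u_i}{\partial\Omega}$ is a nonzero multiple of $f_i$ (since $v_{u_i\otimes q}$ has zero trace). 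This is preferable: the paper's appeal to unique continuation here is delicate, because $v_{\alpha_i u_i\otimes q}$ solves a Poisson equation with a fixed source $\alpha_i u_i q$ rather than an equation of the form $\Delta w=cw$, so zero Cauchy data does not automatically yield $w=0$. Your boundary-trace argument sidesteps this entirely. The rest of your proof — the identification $\HUW\cong\calB_2(H^1(\Omega);\calW)$ so that Proposition~\ref{prop_exact_recovery_op} applies, and the recovery formula for $q^\dagger$ from the trace $\restriction{F_i^\dagger}{\partial\Omega}(x,\cdot)=f_i(x)q^\dagger$ — matches the paper's proof.
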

\begin{proof}
To prove the first part of the statement, we apply \Cref{prop_exact_recovery_op} by noticing that $\Phi$ is injective on $\mathcal C=\{\alpha_iu_i\otimes q:\alpha_i\ge0\}$. Indeed, $\Phi_1(\alpha_i u_i\otimes q)_{i=1}^N=0$ gives that $\restriction{\partial_\nu v_{\alpha_i u_i\otimes q}}{\partial\Omega}=0$ for every $1\le i\le N$. By the unique continuation principle, this implies $v_{\alpha_i u_i\otimes q}=0$ in $\Omega$ for every $1\le i\le N$. Now, since $\int_\Omega q\neq0$, $\Phi_2(\alpha_i u_i\otimes q)_{i=1}^N=0$ implies that $\alpha_iu_i=v_{\alpha_i u_i\otimes q}=0$. Therefore, $\alpha_i u_i\otimes q=0$ for every $1\le i\le N$. The second part is an immediate consequence of the fact that $\restriction{F_i^\dagger}{\partial\Omega}(x,\cdot)=f_i(x)q^\dagger$. 
\end{proof}
\begin{remark}
    The aim of our method is to find a convex problem whose solution allows for the reconstruction of an unknown potential for the Calder\'on problem. Therefore, we expect to need a sufficiently large number of measurements $N$ to prove the existence of a dual certificate. In other words, we expect that the validity of condition \Cref{ndsc_calderon} depends on $N$ and that taking a larger $N$ should make its verification easier.
\end{remark}
We now turn to the question of robust recovery. Even though the Calder\'on problem is severely ill-posed \cite{alessandrini1988stable,mandache2001exponential}, it is well known that when reducing to a finite-dimensional unknown $q^\dagger$, the inverse problem becomes Lipschitz stable (see e.g.\ \cite{alessandrini2005lipschitz,bacchelli2006lipschitz, beretta2013lipschitz,harrach2019uniqueness,alberti2022infinite}). So it is expected that a linear rate for the recovery  may be obtained in this context. A regularization approach with a TV penalty term was studied in \cite{rondi-2016,felisi-rondi-2024}, with a precise control of the reconstruction error with respect to the noise level.

We assume that the boundary measurements are only known up to an additive noise of amplitude $\delta>0$. To be more precise, we assume to know $\tilde{z}_1^\delta$ such that the boundary measurements $\tilde{z}_1\eqdef(\restriction{\partial_\nu u^\dagger_i}{\partial\Omega})_{1\leq i\leq N}$ satisfy $\|\tilde{z}_1^\delta-\tilde{z}_1\|_{\calH_1}\leq \delta$. Recalling that $$z_1=(\restriction{\partial_\nu v_{F^\dagger_i}}{\partial\Omega})_{1\leq i\leq N}=(\restriction{\partial_\nu u^\dagger_{i}}{\partial\Omega}-\restriction{\partial_\nu \tilde{f}_i}{\partial\Omega})_{1\leq i\leq N},$$ defining $z_1^\delta\eqdef \tilde{z}_1^\delta -(\restriction{\partial_{\nu}\tilde{f}_i}{\partial\Omega})_{1\leq i\leq N}$, we have $\|z_1^\delta -z_1\|_{\calH_1}\leq\delta$. We assume that $z_2$ and $z_3$ are observed exactly, because they do not corresponds to measurements, but rather to the modeling.

We wish to estimate the unknown $F^\dagger$ by solving
\begin{equation}
        \underset{F\in\HUW^N}{\mathrm{min}}~\frac{1}{2}\|\Phi_1 F-\m_1^{\delta}\|_{\calH_1}^2+\lambda\sum\limits_{i=1}^N\|F_i\|_*~~\mathrm{s.t.}~~\Phi_2F=z_2,~\Phi_3F=0
\tag{$\mathcal{P}^{\lambda,\delta}_{\mathrm{relaxed}}$}
\label{pb_relaxed_noisy infinite}
\end{equation}
for some parameter $\lambda>0$. We assume in the following that the basis $\{f_i\}_{i\in\NN}$ is chosen such that $f_1$ has a positive lower bound, i.e.\ $\lvert f_1(x)\rvert\geq C>0$, a.e.\ $x\in\partial\Omega$ for some positive constant $C>0$. This allows us to derive the following stable recovery result.
\begin{proposition}
   Let $\delta$ and $c$ be positive constants. If there exists $N\in\NN$ such that the non-degenerate source condition \cref{ndsc_calderon} holds then, for any minimizer $F^\delta$ of \cref{pb_relaxed_noisy infinite} with $\lambda =c\delta$ and $\m_1^{\delta}\in\mathcal{H}_1$ such that $\|z_1^\delta -z_1\|_{\calH_1}\leq\delta$ holds, we have that $\|F^{\delta}-F^\dagger\|_{\HUW^N}=\mathcal{O}(\delta)$. Moreover, setting 
   \begin{equation*}
        q^\delta=\frac{1}{\int_{\partial\Omega} \lvert f_i\rvert^2}\int_{\partial\Omega}\restriction{F_i^\delta}{\partial\Omega}(x,\cdot)f_i(x)dx
   \end{equation*}
  for some $i=1,\dots,N$, it holds that $\|q^\delta-q^\dagger\|_{\calW}=\mathcal{O}(\delta)$.
   \label{robust_recovery infinite}
\end{proposition}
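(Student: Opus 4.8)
The plan is to reduce \Cref{robust_recovery infinite} to the abstract robust recovery result \Cref{prop_robust_recovery_op} by verifying its two hypotheses in the present setting, and then to transfer the Hilbert--Schmidt estimate on $F^\delta-F^\dagger$ into the desired $\calW$-estimate on $q^\delta-q^\dagger$. The non-degenerate source condition \cref{ndsc_calderon} is assumed, so the only structural hypothesis that needs checking is that $\restriction{\Phi}{T}$ is injective and has closed range, where $T=\prod_i T_i$ with $T_i=\{u_i\otimes b+a\otimes q:(a,b)\in\calW\times H^1(\Omega)\}$. One caveat: \cref{pb_relaxed_noisy infinite} is not literally of the form \cref{primal_noisy_op}, since the constraints $\Phi_2F=z_2$, $\Phi_3F=0$ are kept as hard constraints while only $\Phi_1$ appears in the data-fidelity term. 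I would handle this either by folding the equality constraints into the domain (replacing $\calB_2(\calH_1;\calH_2)^N$ by the affine subspace $\{F:\Phi_2F=z_2,\Phi_3F=0\}$ and $\Phi$ by $\Phi_1$ restricted to the corresponding linear subspace), and then re-running the argument of \Cref{appendix_robust_recovery}; or, more simply, by noting that the proof of \Cref{prop_robust_recovery_op} only uses the dual certificate $H=\Phi^*p\in\operatorname{Im}(\Phi^*)$ and the injectivity-with-closed-range of $\restriction{\Phi}{T}$ for the \emph{full} $\Phi=(\Phi_1,\Phi_2,\Phi_3)$, and that exact satisfaction of the $\Phi_2,\Phi_3$ constraints by $F^\delta$ only makes the argument easier. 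I expect the cleanest route is the latter: apply \Cref{prop_robust_recovery_op} verbatim with $\m^\delta=(\m_1^\delta,z_2,0)$, which indeed satisfies $\|\m^\delta-\m\|_\calH\le\delta$, to obtain $\sum_i\|(F^\delta-F^\dagger)_i\|_{\HS}=\mathcal O(\delta)$, hence $\|F^\delta-F^\dagger\|_{\HUW^N}=\mathcal O(\delta)$ via \Cref{lemma isomorphism B2 and Bochner}.

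The main work is therefore establishing that $\restriction{\Phi}{T}$ is injective with closed range. For injectivity, suppose $F\in T$ with $\Phi F=0$; writing $F_i=u_i\otimes b_i+a_i\otimes q$, the argument mirrors the injectivity-on-$\mathcal C$ computation in the proof of \Cref{proposition exact recovery calderon infinite}: $\Phi_1F=0$ gives $\restriction{\partial_\nu v_{F_i}}{\partial\Omega}=0$, hence $v_{F_i}=0$ by unique continuation, so $u_{F_i}=\tilde f_i$ and $\restriction{F_i}{\diag}=0$; combined with $\Phi_2F=0$ (using $\int_\Omega q\ne0$) and the structure of $T_i$ one should be able to force $a_i$ and $b_i$ to vanish, using also the third constraint $\Phi_3F=0$ and the lower bound on $f_1$ to pin down the common $\calW$-component. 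For closed range, the natural approach is to show that $\restriction{\Phi}{T}$ is bounded below, i.e.\ $\|\Phi F\|_\calH\ge c\|F\|_{\HUW^N}$ on $T$: one controls $\|b_i\|_\calW$ and $\|a_i\|_{H^1(\Omega)}$ in terms of $\|\Phi_1F\|_{\calH_1}+\|\Phi_2F\|_{\calH_2}+\|\Phi_3F\|_{\calH_3}$ by combining elliptic regularity for \cref{eq solved by u_G_i infinite}, the trace theorem, the identity $\int_\Omega F_i(\cdot,y)\,dy=[\int_\Omega q^\dagger]v_{F_i}+\m_{2,i}$-type relations, and finiteness of $\dim\calW$ (which makes $\|\cdot\|_\calW$ equivalent to any other norm on the $\calW$-factor and lets one extract the scalar coefficients continuously). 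This quantitative lower bound — essentially a stability estimate for the \emph{linear} lifted operator on the tangent space — is where the real analytic content lies and is the step I expect to be the main obstacle; it is a finite-dimensional-unknown analogue of Lipschitz stability for Calder\'on, so it should be tractable but requires care.

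Finally, the passage to $q^\delta$ is straightforward. Since $\restriction{F_i^\dagger}{\partial\Omega}(x,\cdot)=f_i(x)q^\dagger$, the formula for $q^\dagger$ in \Cref{proposition exact recovery calderon infinite} holds, and by linearity
\[
q^\delta-q^\dagger=\frac{1}{\int_{\partial\Omega}|f_i|^2}\int_{\partial\Omega}\bigl(\restriction{F_i^\delta}{\partial\Omega}-\restriction{F_i^\dagger}{\partial\Omega}\bigr)(x,\cdot)\,f_i(x)\,dx .
\]
The boundary-restriction map $\HUW\ni F_i\mapsto\restriction{F_i}{\partial\Omega}\in L^2(\partial\Omega;\calW)$ is bounded (trace theorem, \citet[Theorem 7.11]{arendt2016mapping}), $f_i$ is bounded, and $\int_{\partial\Omega}|f_i|^2\ne0$ since $f_i\ne0$; composing these with Cauchy--Schwarz gives $\|q^\delta-q^\dagger\|_\calW\le C\|F_i^\delta-F_i^\dagger\|_{\HUW}\le C\|F^\delta-F^\dagger\|_{\HUW^N}=\mathcal O(\delta)$, which completes the proof.
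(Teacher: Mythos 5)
Your overall plan mirrors the paper's: (a) establish that $\restriction{\Phi}{T}$ is injective with closed range, (b) invoke \Cref{prop_robust_recovery_op} to get $\|F^\delta-F^\dagger\|_{\HUW^N}=\mathcal O(\delta)$, and (c) push that through the boundary restriction formula to bound $\|q^\delta-q^\dagger\|_\calW$. Step (c) is handled the same way the paper does it (Cauchy--Schwarz, Fubini, trace theorem), and your observation about the mismatch between \cref{pb_relaxed_noisy infinite} and \cref{primal_noisy_op} is a genuine and valid caveat that the paper glosses over; your fix (note that $F^\dagger$ is feasible for the constrained problem, so the optimality inequality in the Bregman estimate goes through unchanged with $\|\Phi F-\m^\delta\|_\calH$ replaced by $\|\Phi_1 F-\m_1^\delta\|_{\calH_1}$) is the right one.

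The gap is in step (a). You correctly flag it as the analytic heart of the argument, but your sketch — ``elliptic regularity $+$ trace theorem $+$ $\dim\calW<\infty$'' — does not actually close it. The issue is quantitative: one must show that a \emph{finite} number $N$ of boundary observations control the common $\calW$-component $r_1$, and finite-dimensionality of $\calW$ alone does not give this, since the bound has to be uniform in $F\in T$ and the operator $\Phi_1$ only sees $r_1$ through $\Psi'[q^\dagger](r_1)$ evaluated on finitely many Dirichlet data. The paper's \Cref{lemma closed range infinite} supplies the missing mechanism: it uses (i) the expression of $\Phi_2$ to write $b_i+\alpha_i u_i$ in terms of $\Phi_2^iF$ and $v_{F_i}$, (ii) the PDE satisfied by $v_{F_i}$ to relate $\Phi_1^iF$ to $\Psi'[q^\dagger](r_1)f_i$ up to error terms controlled by $\Phi_2F,\Phi_3F$, (iii) the injectivity of $\Psi'[q^\dagger]$ on $\calW$ (from \Cref{lemma expression and injectivity F'}) to get $\|w\|_\calW\le M\|\Psi'[q^\dagger](w)\|$, and crucially (iv) the \emph{compactness} of $\Psi'[q^\dagger](w)$ (from \Cref{lemma compactness F'}) together with a compactness-of-the-unit-sphere/subsequence argument to prove
\[
\sup_{\|w\|_\calW=1}\|\Psi'[q^\dagger](w)(I-P_N)\|\longrightarrow 0\quad\text{as }N\to\infty,
\]
so that for $N$ large enough $\|r_1\|_\calW\le 2M\|\Psi'[q^\dagger](r_1)P_N\|$. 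Step (iv) is what makes finitely many measurements suffice, and it is entirely absent from your proposal. Without it, your bound on $\|a_i\|$ and $\|b_i\|$ in terms of $\|\Phi F\|_\calH$ cannot be justified, so the application of \Cref{prop_robust_recovery_op} is not legitimate as written.

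A minor remark: the $N$ for which closed range holds in \Cref{lemma closed range infinite} is produced by the lemma itself and is not a priori the same $N$ for which \cref{ndsc_calderon} is assumed; one should note (or the paper should) that enlarging $N$ preserves both properties, so a common $N$ can be chosen.
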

\begin{proof}
By \Cref{lemma closed range infinite} below,  $\restriction{\Phi}{T}$ is injective and has closed range. Thus, the first part of the statement follows by using \Cref{prop_robust_recovery_op}.

    Let us now prove the second part of the statement. We recall that, by \Cref{proposition exact recovery calderon infinite}, we have $$q^\dagger=\frac{1}{\int_{\partial\Omega} \lvert f_i\rvert^2}\int_{\partial\Omega}\restriction{F_i^\dagger}{\partial\Omega}(x,\cdot)f_i(x)dx.$$ Therefore, by using the Cauchy-Schwartz inequality and the Fubini theorem, we conclude that
    \begin{equation*}
    \begin{aligned}
        \|q^\delta-q^\dagger\|^2_{\calW}
        &\le\frac{1}{\big(\int_{\partial\Omega} \lvert f_i\rvert^2\big)^2}\int_\Omega\bigg(\int_{\partial\Omega}\lvert\restriction{F_i^\delta}{\partial\Omega}(x,y)-\restriction{F_i^\dagger}{\partial\Omega}(x,y)\rvert\lvert f_i(x)\rvert dx\bigg)^2dy\\
        &\le\frac{1}{\int_{\partial\Omega} \lvert f_i\rvert^2}\int_\Omega\int_{\partial\Omega}\lvert\restriction{F_i^\delta}{\partial\Omega}(x,y)-\restriction{F_i^\dagger}{\partial\Omega}(x,y)\rvert^2dxdy\\
        &=\frac{1}{\int_{\partial\Omega} \lvert f_i\rvert^2}\int_{\partial\Omega}\int_\Omega\lvert\restriction{F_i^\delta}{\partial\Omega}(x,y)-\restriction{F_i^\dagger}{\partial\Omega}(x,y)\rvert^2dydx\\
        &=\|\restriction{F_i^\delta}{\partial\Omega}-\restriction{F_i^\dagger}{\partial\Omega}\|^2_{L^2(\partial\Omega;\calW)}.
    \end{aligned}
    \end{equation*}
    Finally, the boundedness of the trace operator for vector-valued functions \citet[Theorem 7.11]{arendt2016mapping} gives the result.
\end{proof}
We now turn to the proof that $\restriction{\Phi}{T}$ is injective and has closed range.

\begin{lemma}\label{lemma closed range infinite}
    There exist $N\in\NN$ and $C_\Phi>0$ such that, for every $F\in T$,
    \begin{equation*}
        \sum\limits_{i=1}^N\|F_i\|_{\HUW}\le C_\Phi \|\Phi F\|_{\calH}.
    \end{equation*}
\end{lemma}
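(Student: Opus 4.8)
The goal is to bound $\sum_i \|F_i\|_{\HUW}$ by $\|\Phi F\|_\calH$ for $F$ in the model tangent space $T$, which gives both injectivity of $\restriction{\Phi}{T}$ and closedness of its range. I would start by writing a generic element $F = (F_i)_{i=1}^N \in T$ in the form $F_i = u_i \otimes b_i + a_i \otimes q$ for some $a_i \in H^1(\Omega)$ and $b_i \in \calW$, using the definition \eqref{eq:TandTi} of $T_i$ (recall that $u_i$ and $q$ are the normalized versions of $u_i^\dagger$ and $q^\dagger$). Heuristically $F_i(x,y) = u_i(x)b_i(y) + a_i(x)q(y)$. Since $\|F_i\|_{\HUW} \lesssim \|a_i\|_{H^1(\Omega)} + \|b_i\|_\calW$ (both $u_i$ and $q$ have unit norm), it suffices to control $\|a_i\|_{H^1(\Omega)}$ and $\|b_i\|_\calW$ by $\|\Phi F\|_\calH$. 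The strategy is to extract these from the three components $\Phi_1 F$, $\Phi_2 F$, $\Phi_3 F$ in turn, exploiting the assumption that $|f_1| \geq C > 0$ on $\partial\Omega$.

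\textbf{Extracting the $\calW$-component from $\Phi_3$.} The operator $\Phi_3$ compares the boundary traces of the $F_i$'s. Evaluating $F_i$ on $\partial\Omega$ (where $u_i = f_i$) gives $\restriction{F_i}{\partial\Omega}(x,\cdot) = f_i(x) b_i + \restriction{a_i}{\partial\Omega}(x)\, q$ in $\calW$. Then $(\Phi_3^{1,j}F)(x) = f_j(x)\restriction{F_1}{\partial\Omega}(x) - f_1(x)\restriction{F_j}{\partial\Omega}(x) = f_j(x)f_1(x)(b_1 - b_j) + \big(f_j(x)\restriction{a_1}{\partial\Omega}(x) - f_1(x)\restriction{a_j}{\partial\Omega}(x)\big) q$ — actually it is cleaner to observe directly that $f_1(x) b_i - f_i(x) b_1$ can be read off from $\Phi_3^{1,i}$ modulo the $q$-direction and a trace term. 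The point is: using $|f_1| \geq C$, integrating $(\Phi_3^{1,i}F)(x)/f_1(x)$ against a suitable test function lets me solve for $b_i - b_1$ up to a multiple of $q$, controlled by $\|\Phi_3 F\|_{\calH_3}$ and $\|\restriction{a_i}{\partial\Omega}\|$. The $b_1$-direction and the residual $q$-components are then pinned down using $\Phi_2$: the relation $\int_\Omega F_i(\cdot,y)\,dy = u_{F_i}[\int_\Omega q^\dagger]$ becomes $[\int_\Omega b_i] u_i + [\int_\Omega q] a_i = [\int_\Omega q^\dagger]\, v_{F_i}$ up to the observed data $\m_2$, and since $u_{F_i}, v_{F_i}$ are determined (affinely) by $\restriction{F_i}{\diag}$, this ties $b_i$ and $a_i$ together quantitatively.

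\textbf{Extracting the $H^1$-component from $\Phi_1$ and $\Phi_2$.} Here the key structural fact is that $v_{F_i}$ solves $\Delta v_{F_i} = \restriction{F_i}{\diag} - \Delta\tilde f_i$ with zero boundary data, and $\restriction{F_i}{\diag}(x) = u_i(x)b_i(x) + a_i(x)q(x)$ up to the RKHS restriction bound. By elliptic regularity and the boundedness of the restriction-to-diagonal and Neumann-trace operators, $\restriction{\partial_\nu v_{F_i}}{\partial\Omega}$ depends boundedly and \emph{affinely} on $(a_i, b_i)$; combining the information $\|\Phi_1 F\|_{\calH_1}$, $\|\Phi_2 F\|_{\calH_2}$, and what was learned about the $b_i$'s in the previous step should let me solve an elliptic problem for $a_i$ (roughly, a Schrödinger-type equation $-\Delta a_i + [\text{lower order}]\, a_i = [\text{data}]$) and bound $\|a_i\|_{H^1(\Omega)}$. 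The degenerate case where the naive system is not invertible is exactly where one invokes that $0$ is not an eigenvalue of $-\Delta + q^\dagger$ (as in the proof of \Cref{proposition equivalence original prob and rank min prob infinite}), together with unique continuation, to get the quantitative bound — possibly only for $N$ large enough, which is why $N$ is existentially quantified in the statement.

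\textbf{Main obstacle.} The delicate part is the quantitative (rather than qualitative) inversion: the proof of \Cref{proposition equivalence original prob and rank min prob infinite} shows that the constraints \emph{force} $F_i = u_i^\dagger \otimes q^\dagger$, i.e. that the homogeneous system has trivial kernel on $T$, but upgrading this to a norm inequality $\sum_i \|F_i\|_{\HUW} \leq C_\Phi \|\Phi F\|_\calH$ requires either a compactness/contradiction argument (assume a sequence $F^{(n)} \in T$ with $\|\sum_i \|F^{(n)}_i\|\| = 1$ and $\|\Phi F^{(n)}\| \to 0$, extract a weakly convergent subsequence, pass to the limit to contradict injectivity) or an explicit stability estimate built from elliptic regularity and the Lipschitz stability of the finite-dimensional Calderón problem. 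The contradiction route is cleaner but needs care because $T$ is infinite-dimensional and the unit ball is not compact; one must use that the $\calW$-component lives in a finite-dimensional space and that $\Phi_1$ followed by elliptic regularity gains compactness in the $H^1(\Omega)$ factor, so that weak limits of the relevant pieces are strong. I would pursue the contradiction argument, reducing the passage to the limit to the already-established qualitative injectivity of \Cref{proposition equivalence original prob and rank min prob infinite}, and handle the lack of compactness by exploiting the finite dimensionality of $\calW$ and the smoothing of the solution operator for \eqref{eq solved by u_G_i infinite}.
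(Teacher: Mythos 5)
The decomposition and the order of extraction in your plan broadly match the paper's, and your sensitivity to the lack of compactness in $T$ is well placed. But the route you pursue to close the gap — a compactness/contradiction argument that ``reduces the passage to the limit to the already-established qualitative injectivity of Proposition~\ref{proposition equivalence original prob and rank min prob infinite}'' — is circular. Proposition~\ref{proposition equivalence original prob and rank min prob infinite} uses the full infinite family of Dirichlet data $\{f_i\}_{i\in\NN}$ and the rank-one constraint; it does not give injectivity of $\restriction{\Phi}{T}$ for any finite $N$, and that finite-$N$ injectivity is precisely what a contradiction argument would need to invoke in the limit. Even after you exploit the finite-dimensionality of $\calW$ and the affine dependence of $v_{F_i}$ on the $\calW$-variables to get strong convergence of the ``infinite-dimensional'' $H^1$ pieces, the argument terminates at: the limiting kernel element is nonzero and lies in $\operatorname{Ker}(\restriction{\Phi}{T})$ — but you have no reason to believe that this kernel is trivial for the finite $N$ under consideration. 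The only place this could fail to hold is $\Phi_1$, and with finitely many data $\Phi_1$ carries strictly less information.

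The missing ingredient, and the actual content of the paper's proof, is the \emph{linearized Calder\'on problem}. After reducing to the finite-dimensional object $r_1\in\calW$, the paper identifies $\Phi_1^iF$ (up to controlled error terms) with $\Psi'[q^\dagger](r_1)f_i$, where $\Psi'[q^\dagger]$ is the Fr\'echet derivative of the Dirichlet-to-Neumann map (\Cref{lemma expression and injectivity F'}). The choice of $N$ then rests on two facts: $\Psi'[q^\dagger]$ is \emph{injective} on the finite-dimensional $\calW$ (so $\|w\|_\calW\le M\|\Psi'[q^\dagger](w)\|$ for all $w\in\calW$), and $\Psi'[q^\dagger](w)$ is a \emph{compact} operator (\Cref{lemma compactness F'}), which yields $s_N=\sup_{\|w\|_\calW=1}\|\Psi'[q^\dagger](w)(I-P_N)\|\to 0$ by the compactness argument from \citet{alberti2022infinite}. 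This is what lets one choose $N$ so that $s_N\le 1/(2M)$ and absorb the tail, giving $\|r_1\|_\calW\le 2M\|\Psi'[q^\dagger](r_1)P_N\|$ and hence the quantitative bound. Your proposal never touches $\Psi'[q^\dagger]$, its injectivity, or its compactness, so it has no mechanism for producing the required $N$. The remark ``possibly only for $N$ large enough'' flags the right worry, but the unique-continuation and eigenvalue conditions you cite are not what supplies the answer; the compactness of the linearized DN map is.
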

\begin{proof}
    Let $F_i = u_i \otimes a_i + b_i \otimes q\in T_i$ ($i=\toN$), see \eqref{eq:TandTi}. The expression of $\Phi_2$ yields
    \begin{equation}\label{equation for b_i+alpha_iu_i closed range T infinite}
        b_i=-\alpha_i u_i +\frac{1}{\int_\Omega q}\Phi_2^iF+\|q^\dagger\|_\calW\,v_{F_i},\qquad i=\toN,
    \end{equation}
    with $\alpha_i\eqdef[\int_\Omega a_i]/[\int_\Omega q]$. For every $i=\toN$, we set $r_i\eqdef\frac{a_i-\alpha_iq}{\|u_i^\dagger\|_{\HU}}$.
    Since $\Delta v_{F_i}=\restriction{F_i}{\diag}$ in $\Omega$, $v_{F_i}=0$ on $\partial\Omega$, by \Cref{equation for b_i+alpha_iu_i closed range T infinite}, we obtain
\begin{equation}\label{eq PDE for v_G_i closed range infinite}
    \left\{\begin{aligned}
        -\Delta v_{F_i}+q^\dagger v_{F_i}&=-r_iu_i^\dagger-\frac{\Phi_2^iF}{\int_\Omega q}q&&~\mathrm{in}~\Omega,\\
        \restriction{v_{F_i}}{\partial\Omega}&=0&&~\mathrm{on}~\partial\Omega.
    \end{aligned}\right.
\end{equation}
As a result, for every $i=\toN$, we get 
\begin{equation}\label{eq bound for b_i+alpha_iu_i closed range infinite}
\begin{aligned}   \|b_i+\alpha_iu_i\|_{\HU}
&\lesssim\|\Phi_2^iF\|_{\HU}+\|v_{F_i}\|_{\HU}\qquad&&\text{by \Cref{equation for b_i+alpha_iu_i closed range T infinite}}\\
&\lesssim\|\Phi_2^iF\|_{\HU}+\|r_i\|_\calW\|u_i^\dagger\|_{\HU}\qquad&&\text{by \Cref{eq PDE for v_G_i closed range infinite}},
\end{aligned}
\end{equation}
where the implicit constant depends on $\Omega$, $q^\dagger$ and the dimension of $\calW$.
Now, for almost every $x\in\partial\Omega$, the expression of $\Phi_3$ and \Cref{equation for b_i+alpha_iu_i closed range T infinite} yield 
\begin{equation}\label{equation Phi3 closed range infinite}
\begin{aligned}
    f_i(x)f_j(x)(r_i-r_j)=\Phi_3^{i,j}F(x)+f_i(x)\frac{1}{\int_\Omega q}\Phi_2^jF(x)q-f_j(x)\frac{1}{\int_\Omega q}\Phi_2^iF(x)q.
\end{aligned}
\end{equation}
By using that $f_1$ has a positive lower bound and that $\int_{\partial\Omega}\lvert f_j\rvert^2\geq \min_{l=\toN}\int_{\partial\Omega}\lvert f_l\rvert^2>0$, we obtain
\begin{equation}\label{eq_bound_ai-aj}
    \|r_1-r_i\|_\calW\lesssim \|\Phi_3^{i,1}F\|_{L^2(\partial\Omega;\calW)}+\|\Phi_2^1F\|_{\HU}+\|\Phi_2^iF\|_{\HU},
\end{equation}
where the constant depends also on $\Omega$, $q^\dagger$, the lower bound of $f_1$ and the $L^2$-norms of the $f_i$s.
Let us define 
\begin{equation}\label{expression of p_i}
    p_i\eqdef (r_1-r_i)u_i^\dagger-\frac{\Phi_2^iF}{\int_\Omega q}q,\qquad i=\toN
\end{equation}
and $A\colon g\in\LD\mapsto\restriction{\partial_\nu w}{\partial\Omega}\in H^{-1/2}(\partial\Omega)$, where $w\in H_0^1(\Omega)$ is the unique weak solution to
\begin{equation*}
    \left\{\begin{aligned}
        -\Delta w+q^\dagger w&=g&&~\mathrm{in}~\Omega,\\
        \restriction{w}{\partial\Omega}&=0&&~\mathrm{on}~\partial\Omega.
    \end{aligned}\right.
\end{equation*}
By \Cref{eq PDE for v_G_i closed range infinite} we obtain
\begin{equation}\label{eq modified PDE for v_Gi calderon}
    \left\{\begin{aligned}
        -\Delta v_{F_i}+q^\dagger v_{F_i}&=p_i-r_1u_i^\dagger&&~\mathrm{in}~\Omega,\\
        \restriction{v_{F_i}}{\partial\Omega}&=0&&~\mathrm{on}~\partial\Omega.
    \end{aligned}\right.
\end{equation}
Let us denote  the forward map for the Calder\'on problem by $\Psi$ (see \Cref{appendix_frechet_derivative}). From \Cref{eq modified PDE for v_Gi calderon} and \Cref{lemma expression and injectivity F'}, we get
\begin{equation}\label{eq for F' in closed range infinite}
    \Phi_1^iF-Ap_i=\Psi'[q^\dagger](r_1)f_i,
\end{equation}
where we recall that $\Phi_1^iF=\restriction{\partial_\nu v_{F_i}}{\partial\Omega}$.

Since $\calW$ is finite-dimensional and $\Psi'[q^\dagger]$ is injective (see \Cref{lemma expression and injectivity F'}), we have that there exists $M>0$ such that $\|w\|_\calW\leq M\|\Psi'[q^\dagger](w)\|$ for every $w\in\calW$.
Let us denote  the projection onto $\operatorname{span}\{f_1,\dots,f_N\}$ by $P_N$ and  the identity operator on $H^{1/2}(\partial\Omega)$ by $I$. 
The argument we are presenting is taken from the proof of Theorem 2 (i) in \cite{alberti2022infinite}. By the continuity of the mapping 
\begin{equation*}
    \calW\ni w\mapsto \|\Psi'[q^\dagger](w)(I-P_N)\|,
\end{equation*}
and by the compactness of the unit sphere in $\calW$, we get that for every $N\in\NN$ there exists $w_N\in\calW$, $\|w_N\|_\calW=1$ such that
\begin{equation*}
    s_N\eqdef\sup_{\|w\|_\calW=1}\|\Psi'[q^\dagger](w)(I-P_N)\|=\|\Psi'[q^\dagger](w_N)(I-P_N)\|.
\end{equation*}
We now want to show that $s_N\to0$ as $N\to+\infty$. Let $(s_{N_j})_{j\in\NN}$ be a subsequence. By a general topology argument, it suffices to show that there exists a subsequence of $(s_{N_j})_{j\in\NN}$ converging to 0. By the compactness of the sphere in $\calW$, the sequence $(w_{N_j})_{j\in\NN}$ admits a subsequence (that we denote by $w_{N_j}$ to keep the notation simpler) such that $w_{N_j}\to w_*$ as $j\to+\infty$ for some $w_*\in\calW$, $\|w_*\|_\calW=1$. Thus, we obtain
\begin{align*}
    s_{N_j}
    &=\|\Psi'[q^\dagger](w_{N_j})(I-P_{N_j})\|\\
    &\leq\|\Psi'[q^\dagger](w_{N_j}-w_*)(I-P_{N_j})\|+\|\Psi'[q^\dagger](w_*)(I-P_{N_j})\|\\
    &\leq 2\|\Psi'[q^\dagger]\|\|w_{N_j}-w_*\|_\calW+\|\Psi'[q^\dagger](w_*)(I-P_{N_j})\|,
\end{align*}
where we used $\|P_{N_j}\|=1$.
By the compactness of $\Psi'[q^\dagger](w_*)$ showed in \Cref{lemma compactness F'}, we have that the second term goes to 0. Therefore, since $w_{N_j}\to w_*$, we conclude that $s_{N_j}\to0$ as $j\to+\infty$. Thus, we have $s_N\to0$ as $N\to+\infty$. Therefore, by taking $w=r_1$, we have that there exists $N\in\NN$ such that
\begin{equation*}
    \|\Psi'[q^\dagger](r_1)(I-P_N)\|\leq \frac{1}{2M}\|r_1\|_\calW.
\end{equation*}
Therefore, we get
\begin{align*}
    \|r_1\|_\calW
    &\leq M\|\Psi'[q^\dagger](r_1)(I-P_N)\|+M\|\Psi'[q^\dagger](r_1)P_N\|\\
    &\leq\frac{1}{2}\|r_1\|_\calW+M\|\Psi'[q^\dagger](r_1)P_N\|,
\end{align*}
from which we obtain
\begin{equation*}
    \|r_1\|_\calW\leq2M\|\Psi'[q^\dagger](r_1)P_N\|.
\end{equation*} 
As a result, we get
\begin{equation*}
\begin{aligned}
    \|r_1\|_\calW
    &\lesssim\|\Psi'[q^\dagger](r_1)P_N\|\\
    &\lesssim\|\Phi_1F\|_{H^{-1/2}(\partial\Omega)^N}+\|(p_i)_{1\leq i\leq N}\|_{\LD^N}\qquad\mathrm{by~\Cref{eq for F' in closed range infinite}},
\end{aligned}
\end{equation*}
with the implicit constant depending on $M$, $q^\dagger$ and the $f_i$s.

Now, by the expression of $p_i$ \eqref{expression of p_i}, we have
\begin{equation*}
    \|p_i\|_{\LD}\lesssim\|r_1-r_i\|_\calW+\|\Phi_2^iF\|_{\HU}, \qquad i=\toN.
\end{equation*}
Therefore, using \Cref{eq_bound_ai-aj}, we get
\[
\|r_i\|_\calW \le \|r_i-r_1\|_\calW + \|r_1\|_\calW \lesssim \|\Phi_1F\|_{H^{-1/2}(\partial\Omega)^N}+\|\Phi_2F\|_{\HU^N}+\|\Phi_3F\|_{L^2(\partial\Omega,\calW)^{\binom{N}{2}}}.
\]
Finally, using \Cref{eq bound for b_i+alpha_iu_i closed range infinite}, recalling that $\|q\|_{\calW}=1$ and observing that
\begin{equation*}
    \|u_i\otimes a_i+b_i\otimes q\|_{\HUW}\leq \|u_i^\dagger\|_{\HU}\|r_i\|_\calW+\|b_i+\alpha_iu_i\|_{\HU},
\end{equation*}
we conclude the proof.
\end{proof}
\subsubsection{Dual certificate}\label{subsubsec_dual_certificate_infinite}
Considering the results of \Cref{subsec_recovery_calderon}, the natural question that remains unanswered is the validity of the non-degenerate source condition \Cref{ndsc_calderon}. We have currently been unable to verify that \eqref{ndsc_calderon} holds under suitable assumptions on the unknown $q^\dagger$. The main difficulty we faced is the complicated form of the forward operator $\Phi$. This makes the construction of dual certificates, which are element of $\mathrm{Im}(\Phi^*)$, highly challenging.

As explained in \Cref{subsec_nuclear_norm_min_op}, our proof that $\restriction{\Phi}{T}$ is injective and has closed range allows us to define a \emph{dual pre-certificate}, which is a simple proxy for the minimal norm dual certificate. Indeed, defining $p\eqdef (P_T\Phi^*)^\dagger((u_i\otimes q)_{1\le i\le N})$, we have that the pre-certificate $H\eqdef\Phi^*p$ satisfies $P_{T_i}(H_i)=u_i\otimes q$ for every ${i=\toN}$. The difficulty is then to find conditions under which $\|P_{T_i^\perp}(H_i)\|\leq 1$ for every $i=\toN$, which would imply the validity of the non-degenerate source condition. 

In the next section, we consider a simpler inverse problem for which the condition \Cref{ndsc_calderon} can be proved.
\section{Internal measurements}\label{sec_internal_measurements}
In this section we consider a toy inverse problem where internal measurements (rather than boundary measurements, as in the Calder\'on case) are available. This is the case for hybrid or coupled-physics inverse problems \citep{BAL-2012,kuchment-2012,2017-ammari-etal,alberti-capdeboscq-2018}. In particular, the model considered here is common in quantitative photoacoustic tomography in a diffusive regime \citep{BAL-REN-2011}. 

We apply the lifting approach to this setting and derive a suitable non-degenerate source condition that guarantees exact and stable recovery. We conclude by verifying this condition under specific structural assumptions on the unknown. This example serves to demonstrate the consistency of our approach in a simplified setting.

\subsection{Problem formulation}
Let $\Omega$ be a connected bounded open subset of $\RR^d$ ($d\in\{1,2,3\}$) with $C^2$ boundary and $q^\dagger\in L^{\infty}(\Omega)$ be a positive potential. Consider a fixed positive Dirichlet datum $f\in H^{3/2}(\partial\Omega)$ and let $u^\dagger\in H^2(\Omega)$ be the unique strong solution to
\begin{equation}\left\{
    \begin{aligned}
        -\Delta u^\dagger+q^\dagger u^\dagger&=0&&\mathrm{in}~\Omega,\\
        u^\dagger&=f&&\mathrm{on}~\partial\Omega.
    \end{aligned}\right.
    \label{ip_internal_meas}
\end{equation}
The inverse problem we wish to solve consists of recovering $q^\dagger$ from the knowledge of $u^\dagger$ in $\Omega$. 

In this section, we prove that if $q^\dagger$ does not vary too much (that is to say that the difference $\sup q^\dagger -\inf q^\dagger$ is small enough), it can be recovered as the unique solution of a nuclear norm optimization problem. We stress that this result is only useful as a consistency check for our lifting approach, as the considered inverse problem can be directly solved by using $q^\dagger=\Delta u^\dagger/u^\dagger$, since, by the strong maximum principle \citep[Theorem 9.6]{Gilbarg2001-ry},  $u^\dagger$ is positive on $\overline{\Omega}$.

\subsection{Lifting}
As in the previou section, let us apply the lifting approach by considering
$$F^\dagger\eqdef u^\dagger\otimes q^\dagger\in\HDLD.$$
By the proof of \Cref{lemma isomorphism B2 and Bochner}, it is evident that $\HDLD$ is isometrically isomorphic to $\LDHD$ via the operator 
$$
\HDLD\ni a\otimes b\mapsto b\otimes a\in\LDHD.
$$
As a consequence, we could work with $q^{\dagger}\otimes u^{\dagger}\in\LDHD$ indifferently. We chose the former to be consistent with the boundary measurements setting of \Cref{sec_calderon}, in which this choice yields an arguably nicer space. 

Since $d\le3$, we have that $\HD$ is a reproducing kernel Hilbert space (see \citet{adams2003sobolev,brezisFunctionalAnalysisSobolev2011,novak2018reproducing}). Moreover, since $\Omega$ has Lispchitz boundary, we notice by \citet[Theorem 1.4.3.1]{Grisvard1986-ba} that $\HD=\restriction{H^2(\RR^d)}{\Omega}$. Therefore, we obtain that the kernel $K$ of $\HD$ coincides with the restriction to $\Omega\times\Omega$ of the kernel of $H^2(\RR^d)$ (see \citet[Theorem page 351]{aronszajn1950theory} or \citet[Theorem 6]{berlinet2011reproducing}). By the translation invariance of the kernel of $\HD$ \citep{novak2018reproducing}, we notice that \Cref{equation condition on K for restriction to diag} holds for $K$.
Thus, using \Cref{operator restriction to diag} with $m=0$ and $\calH=\HD$ and the isomorphism between $\LDHD$ and $\HDLD$, we can define $\restriction{F}{\diag}\in\LD$ for every $F\in\HDLD$.

With this definition, we see that $u^\dagger=u_{F^\dagger}$, where, given some $F\in\HDLD$, we denote by $u_F\in\HD$ the unique solution to 
\begin{equation*}\left\{
    \begin{aligned}
        \Delta u_F&=\restriction{F}{\operatorname{diag}(\Omega\times\Omega)}&&\mathrm{in}~\Omega,\\
        u_F&=f&&\mathrm{on}~\partial\Omega.
    \end{aligned}\right.
\end{equation*}

As in the previous section, we notice that the map $F\mapsto u_F$ is \emph{affine}, while $q\mapsto u$ is nonlinear. In order to obtain a \emph{linear} mapping from the unknown to the observations, we also define, for every $F\in\HDLD$, the function $v_F=u_F-\Tilde{f}\in H_0^1(\Omega)\cap\HD$, where $\Tilde{f}\in \HD$ is the harmonic extension of the boundary datum $f\in H^{3/2}(\partial\Omega)$, so that the map $F\mapsto v_F$ is linear.

To leverage the information that $F^\dagger(\cdot, y)=q^\dagger(y)u^\dagger$ is proportional to $u^\dagger$ for almost every\ $y\in\Omega$, we also observe that $F^\dagger$ satisfies
\begin{equation*}
    \int_{\Omega}F^\dagger(\cdot,y)dy=\bigg[\int_{\Omega}q^\dagger\bigg]u^\dagger.
\end{equation*}
As a result, assuming we know $\int_{\Omega}q^\dagger>0$, we set $\mathcal{H}\eqdef\left(\HcapH\right) \times H^2(\Omega)$, where we endow $\HcapH$ with the $L^2$-norm of the Laplacian (that is equivalent to the classical Sobolev norm), and define
\begin{equation}\label{eq:Phi-internal}
    \begin{aligned}
        \Phi \colon \HDLD  &\to  \mathcal{H}\\
        F &\mapsto \bigg(v_F, \int_\Omega F(\cdot,y)dy\bigg),
    \end{aligned}
\end{equation}
and $\m\eqdef (u^\dagger-\Tilde{f},[\int_{\Omega}q^\dagger]u^\dagger)=\Phi(F^\dagger)\in\mathcal{H}$. We propose to recover the unknown $F^\dagger$ by solving
\begin{equation}
        \mathrm{find}~F\in\HDLD~\mathrm{s.t.}~\Phi F=\m~\mathrm{and}~\mathrm{rank}(F)=1,
        \tag{$\mathcal{P}_{\mathrm{lifted}}$}
        \label{pb_lifted_internal_measurements}
\end{equation}
where we used the identification $\HDLD \cong \calB_2(H^2(\Omega);L^2(\Omega))$ (see \Cref{sec_bochner}) to define $\operatorname{rank} F$.
As in the previous section, our first result below proves the consistency of this lifting approach.
\begin{remark}\label{remark on the assumption integral q^dagger}
    The assumption that $\int_{\Omega}q^\dagger$ is known may appear artificial. It is required in the proof of \Cref{lemma_consistency_internal} to remove the invariance of the lifted variable $F=w\otimes p$ to the multiplication of $w$ and $p$ by a nonzero constant $\mu$ and $1/\mu$, respectively. A possible way to remove this assumption would be to introduce an additional variable $\alpha\in\RR$ and impose the constraint $\int_{\Omega}F^\dagger(\cdot,y)dy=\alpha u^\dagger$. Reasoning as in the proof of \Cref{lemma_consistency_internal}, one can show that the unique solution of this alternative lifted problem is $(F,\alpha)=(F^\dagger,\int_{\Omega}q^\dagger)$. As the introduction of this additional variable makes the construction of a dual certificate more difficult, we leave the investigation of this approach to future works.
\end{remark}
\begin{lemma}
    The unique solution to \Cref{pb_lifted_internal_measurements} is $F^\dagger=u^\dagger\otimes q^\dagger \in \HDLD$.
    \label{lemma_consistency_internal}
\end{lemma}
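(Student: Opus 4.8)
The plan is to take an arbitrary feasible point of \eqref{pb_lifted_internal_measurements}, write it as a rank-one element $F=w\otimes p$ with $w\in H^2(\Omega)$ and $p\in L^2(\Omega)$ (so that heuristically $F(x,y)=w(x)p(y)$), and show that the two scalar-type constraints encoded in $\Phi F=\m$ force $F=F^\dagger$. The decomposition is legitimate: by \Cref{lemma isomorphism B2 and Bochner}, $F$ corresponds to a Hilbert--Schmidt operator in $\calB_2(H^2(\Omega);L^2(\Omega))$, whose singular value decomposition, being rank one, is $\sigma\,\varphi\otimes\psi$ with $\varphi\in H^2(\Omega)$, $\psi\in L^2(\Omega)$; taking $w=\sigma\varphi$ and $p=\psi$ gives $F=w\otimes p$. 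I would first record that $F^\dagger=u^\dagger\otimes q^\dagger$ is itself feasible: it has rank one, $u^\dagger=u_{F^\dagger}$ yields $v_{F^\dagger}=u^\dagger-\tilde f$, and $\int_\Omega F^\dagger(\cdot,y)\,dy=\big[\int_\Omega q^\dagger\big]u^\dagger$. Hence it suffices to prove uniqueness.

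For uniqueness, I would unpack the first component of $\Phi F=\m$: the identity $v_F=u^\dagger-\tilde f$ means $u_F=u^\dagger$, and since $u_F$ is characterized by $\Delta u_F=\restriction{F}{\diag}$ in $\Omega$ with $u_F=f$ on $\partial\Omega$, comparing with $\Delta u^\dagger=q^\dagger u^\dagger$ from \eqref{ip_internal_meas} and invoking uniqueness for the Dirichlet problem gives the pointwise identity $w(x)p(x)=\restriction{F}{\diag}(x)=q^\dagger(x)u^\dagger(x)$ for a.e.\ $x\in\Omega$. Then I would unpack the second component: $\int_\Omega F(\cdot,y)\,dy=\big(\int_\Omega p\big)\,w=\big[\int_\Omega q^\dagger\big]u^\dagger$. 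Setting $c:=\int_\Omega p$, one has $c\neq 0$, since otherwise $\big[\int_\Omega q^\dagger\big]u^\dagger=0$, which is impossible because $\int_\Omega q^\dagger>0$ by assumption and $u^\dagger\neq 0$ (indeed $u^\dagger=f>0$ on $\partial\Omega$). Therefore $w=\mu u^\dagger$ with $\mu:=\big[\int_\Omega q^\dagger\big]/c\neq 0$.

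To finish, substitute $w=\mu u^\dagger$ into the diagonal identity $wp=q^\dagger u^\dagger$ to obtain $\mu\,u^\dagger p=q^\dagger u^\dagger$ a.e.\ in $\Omega$. Since $d\le 3$ we have $u^\dagger\in H^2(\Omega)\hookrightarrow C(\overline\Omega)$, and by the strong maximum principle $u^\dagger>0$ on $\overline\Omega$, so one may divide by $u^\dagger$ to get $p=q^\dagger/\mu$. Consequently $F=w\otimes p=(\mu u^\dagger)\otimes(q^\dagger/\mu)=u^\dagger\otimes q^\dagger=F^\dagger$, which concludes the argument.

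\textbf{Main obstacle.} I do not expect a serious difficulty here. The only points requiring care are the well-definedness of the diagonal restriction $\restriction{F}{\diag}$ (already granted by \Cref{operator restriction to diag} together with the reproducing-kernel property of $H^2(\Omega)$ for $d\le 3$) and the transfer of the first constraint to a pointwise equality on the diagonal via uniqueness of the Dirichlet problem. The decisive structural ingredient is the positivity of $u^\dagger$ on $\overline\Omega$: it is exactly what makes the final division legitimate, and it also explains, as noted in \Cref{remark on the assumption integral q^dagger}, why knowing $\int_\Omega q^\dagger$ is used to eliminate the scaling ambiguity $w\mapsto\mu w$, $p\mapsto p/\mu$ inherent in the rank-one lifting.
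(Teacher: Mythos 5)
Your proof is correct and follows essentially the same route as the paper's: decompose a feasible rank-one $F=w\otimes p$, use the second constraint to get $w=\mu u^\dagger$ with $\mu=\big[\int_\Omega q^\dagger\big]/\int_\Omega p\neq 0$, and use the first constraint to get the diagonal identity $wp=q^\dagger u^\dagger$, from which $p=q^\dagger/\mu$ and $F=F^\dagger$. The only cosmetic differences are that you make explicit the feasibility of $F^\dagger$ and the use of positivity of $u^\dagger$ in the final division, both of which the paper leaves implicit.
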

\begin{proof}
    Let $F\in \HDLD$ be a solution to \cref{pb_lifted_internal_measurements}. Since $\operatorname{rank}(F)=1$ we have that $F=w\otimes p$ for some $w\in H^2(\Omega)$, $p \in L^2(\Omega)$. The equality $\Phi F=\m$ yields
    \begin{align}\label{eq: phi1=b1 internal}
    &v_F=v_{F^\dagger}, \\
    \label{eq: phi2=b2 internal}
    &\bigg[\int_\Omega p\bigg] w = \bigg[\int_\Omega q^\dagger\bigg] u^\dagger. 
    \end{align}
    Since the right hand side of \Cref{eq: phi2=b2 internal} is non-zero, its left hand side is also non-zero. Hence, we obtain that 
    \begin{equation}\label{eq: rel w e u}
        w=\frac{\int_\Omega q^\dagger}{\int_\Omega p}u^\dagger.
    \end{equation}
    On the other hand, \Cref{eq: phi1=b1 internal} gives $\restriction{F}{\diag}=\restriction{F^\dagger}{\diag}$, that is to say
    \begin{equation}\label{eq: rel pw qu}
        w(x)p(x)=u^\dagger(x)q^\dagger(x), \qquad \text{a.e.\ } x\in\Omega.
    \end{equation}
    Plugging \Cref{eq: rel w e u} into \Cref{eq: rel pw qu} we obtain
    \begin{equation*}
        p=\frac{\int_\Omega p}{\int_\Omega q^\dagger}q^\dagger.
    \end{equation*}
    Therefore, we can conclude that
    \begin{equation*}
        F=w\otimes p=\frac{\int_\Omega q^\dagger}{\int_\Omega p}u^\dagger\otimes \frac{\int_\Omega p}{\int_\Omega q^\dagger}q^\dagger=u^\dagger\otimes q^\dagger =F^\dagger.
    \end{equation*}
    This concludes the proof.
\end{proof}

We have therefore reduced the problem of recovering $q^\dagger$ from $u^\dagger$ to the resolution of a \emph{linear} system, $\Phi F=\m$, under a non-convex rank-one constraint. In the next section we consider the relaxed problem where we convexify this constraint as in \Cref{sec_convex_relaxation_calderon} to finally obtain a fully convex optimization problem that allows us to recover $q^\dagger$.
\begin{remark}\label{remark linear system internal measurements}
Before turning to the convex relaxation of \cref{pb_lifted_internal_measurements}, we notice that this inverse problem can in fact be solved by considering a lifted linear system of equations, without relying on nuclear norm minimization as described in \Cref{sec_convex_relaxation} below. Indeed, considering the bounded linear operator
    \begin{equation*}
    \begin{aligned}
        \Psi \colon L^2(\Omega)\times \HDLD  &\to \left(\HcapH\right) \times \HDLD \\
        (p,F) &\mapsto (v_F, F-u^\dagger\otimes p),
    \end{aligned}
\end{equation*}
we have that $(q^\dagger,F^\dagger)$ is the only pair in $L^2(\Omega)\times\HDLD$ satisfying $\Psi(p,F)=(v_{F^\dagger},0)$. To see this, we notice that $v_F=v_{F^\dagger}$ implies $\restriction{F}{\diag}=\restriction{F^\dagger}{\diag}$. Therefore, we get that
    \begin{equation*}
u^\dagger(x)p(x)=F(x,x)=F^\dagger(x,x)=u^\dagger(x)q^\dagger(x), \qquad \text{a.e.\ } x\in\Omega,
    \end{equation*}
    which yields $p(x)=q^\dagger(x)$. Finally, we can conclude that $F=F^\dagger$ by observing that $0=F-u^\dagger\otimes p=F-F^\dagger$. However, we stress that the goal of this section is to prove the validity of the non-degenerate source condition for a similar, but easier, nonlinear inverse problem than the Calder\'on one. Therefore, we face this problem via nuclear norm minimization techniques.
\end{remark}
\subsection{Convex relaxation}
\label{sec_convex_relaxation}
Let $\sigma\eqdef\|u^\dagger\|_{H^2(\Omega)}\|q^\dagger\|_{L^2(\Omega)}>0$.
From now on, we denote the normalized versions of $u^\dagger$ and $q^\dagger$ by $u$ and $q$, namely, $u=u^\dagger/\|u^\dagger\|_{H^2(\Omega)}$ and $q=q^\dagger/\|q^\dagger\|_{L^2(\Omega)}$, so that $F^\dagger=\sigma u\otimes q$.

As previously discussed, a natural convex proxy for the rank is the nuclear norm. This naturally leads us to consider the following relaxed convex problem
\begin{equation}
        \underset{F\in \HDLD}{\mathrm{min}}~\|
        F\|_*~~\mathrm{s.t.}~~\Phi F=\m.
        \tag{$\mathcal{P}_{\mathrm{relaxed}}$}
        \label{primal_internal}
\end{equation}

\subsubsection{Exact and robust recovery}
We now show that the non-degenerate source condition implies an exact recovery result for the unknown $q^\dagger$. In the following section, we show that, under suitable assumptions on $q^\dagger$, this condition indeed holds.
\begin{proposition}
    If the non-degenerate source condition
    \begin{equation}
        \mathrm{there~exists~} H\in\mathrm{Im}(\Phi^*)~\mathrm{s.t.}~H=u\otimes q+W~~\mathrm{with}~Wu=0,~W^*q=0~\mathrm{and}~\|W\|<1
        \tag{NDSC}
        \label{ndsc_internal}
    \end{equation}
    holds, then $F^\dagger=\sigma u\otimes q=u^\dagger\otimes q^\dagger$ is the unique solution to \eqref{primal_internal}. Moreover, it holds that \begin{equation*}
        q^\dagger=\frac{1}{\int_{\partial\Omega} \lvert f\rvert^2}\int_{\partial\Omega}\restriction{F^\dagger}{\partial\Omega}(x,\cdot)f(x)dx.
    \end{equation*}
\end{proposition}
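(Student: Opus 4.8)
The plan is to derive the first assertion from the general exact recovery result \Cref{prop_exact_recovery_op} applied with $N=1$. In that case the non-degenerate source condition \eqref{ndsc_op} is exactly the hypothesis \eqref{ndsc_internal}, and the cone appearing in \Cref{prop_exact_recovery_op} reduces to $\mathcal{C}=\{\alpha\,u\otimes q:\alpha\ge0\}$. So the only ingredient missing before invoking \Cref{prop_exact_recovery_op} is the injectivity of $\Phi$ on $\mathcal{C}$, which for $N=1$ amounts to $\m\neq0$.

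To check this, I would take $\alpha\ge0$ with $\Phi(\alpha\,u\otimes q)=0$ and inspect the second component of $\Phi$ in \eqref{eq:Phi-internal}: it equals $\int_\Omega(\alpha\,u\otimes q)(\cdot,y)\,dy=\alpha\big[\int_\Omega q\big]u$. Since $u\neq0$ and $\int_\Omega q=\int_\Omega q^\dagger/\|q^\dagger\|_{L^2(\Omega)}>0$ by the standing assumption $\int_\Omega q^\dagger>0$, this forces $\alpha=0$; equivalently, $\m=\Phi F^\dagger$ has nonzero second component $\big[\int_\Omega q^\dagger\big]u^\dagger$, recalling that $u^\dagger$ is positive on $\overline{\Omega}$. \Cref{prop_exact_recovery_op} then gives that $F^\dagger=\sigma u\otimes q=u^\dagger\otimes q^\dagger$ is the unique solution to \eqref{primal_internal}.

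For the last identity, the plan is to compute the trace of $F^\dagger$ on $\partial\Omega$ with respect to its first variable. Since $F^\dagger=u^\dagger\otimes q^\dagger\in H^2(\Omega;L^2(\Omega))$, the trace theorem for vector-valued Sobolev functions (as used in \Cref{sec_calderon}, see \citet[Theorem 7.11]{arendt2016mapping}) makes $\restriction{F^\dagger}{\partial\Omega}\in L^2(\partial\Omega;L^2(\Omega))$ well defined, and by linearity it equals $(\restriction{u^\dagger}{\partial\Omega})\otimes q^\dagger=f\otimes q^\dagger$, that is $\restriction{F^\dagger}{\partial\Omega}(x,\cdot)=f(x)\,q^\dagger$ for a.e.\ $x\in\partial\Omega$. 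Plugging this into the right-hand side yields $\int_{\partial\Omega}\restriction{F^\dagger}{\partial\Omega}(x,\cdot)f(x)\,dx=\big[\int_{\partial\Omega}\lvert f\rvert^2\big]q^\dagger$, and dividing by $\int_{\partial\Omega}\lvert f\rvert^2>0$ (positive since $f>0$) gives the claimed formula.

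I do not expect any real difficulty here: the statement is essentially a specialization of \Cref{prop_exact_recovery_op}, and the two small points requiring care — the injectivity of $\Phi$ on $\mathcal{C}$ and the trace computation — are routine. The genuinely hard part, namely showing that \eqref{ndsc_internal} can actually be fulfilled under suitable hypotheses on $q^\dagger$, is deferred to the remainder of this section.
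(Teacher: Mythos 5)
Your proof is correct and takes the same route as the paper: specialize \Cref{prop_exact_recovery_op} to $N=1$ and read off the trace identity from $F^\dagger|_{\partial\Omega}(x,\cdot)=f(x)q^\dagger$. The only difference is that you spell out the verification that $\Phi$ is injective on $\mathcal{C}$ (equivalently $\m\neq0$) via the second component $\Phi_2$, a check that the paper leaves implicit under the phrase ``immediate consequence''; this is a welcome bit of extra rigor, not a deviation.
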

\begin{proof}
The first part of the statement is an immediate consequence of  \Cref{prop_exact_recovery_op} with $N=1$.   In order to show the last part of the statement, it is sufficient to observe that $F^\dagger(x,y)=f(x)q^\dagger(y)$ for almost every $(x,y)\in\partial\Omega\times\Omega$.
\end{proof}

We now turn to the question of robust recovery. We assume that we only know   $u^\delta\in\HD$ such that $\|u^\delta-u^\dagger\|_{\HD}\leq\delta$, for some $\delta>0$. As a result, we obtain that $$\|\m^{\delta}-\m\|_{\mathcal{H}}\leq \delta\sqrt{1+\bigg[\int_{\Omega}q^\dagger\bigg]^2}.$$
We wish to estimate the unknown $F^\dagger$ by solving
\begin{equation}
        \underset{F\in \HDLD}{\mathrm{min}}~\frac{1}{2}\|\Phi F-\m^{\delta}\|_{\mathcal{H}}^2+\lambda\|F\|_*
        \tag{$\mathcal{P}^{\lambda,\delta}_{\mathrm{relaxed}}$}
        \label{primal_noisy_internal}
\end{equation}
for some parameter $\lambda>0$. We have the following robust recovery result.
\begin{proposition}
   Let $\delta$ and $c$ be two positive constants. If the non-degenerate source condition \cref{ndsc_internal} holds with $H=\Phi^*p$ then, for any minimizer $F^\delta$ of \cref{primal_noisy_internal} with $\lambda =c\delta$ and $u^{\delta}\in\HD$ such that $\|u^{\delta}-u^\dagger\|_{\HD}\leq \delta$, we have that $\|F^{\delta}-F^\dagger\|_{\HDLD}=\mathcal{O}(\delta)$. Moreover, by setting $$
        q^\delta=\frac{1}{\int_{\partial\Omega} \lvert f\rvert^2}\int_{\partial\Omega}\restriction{F^\delta}{\partial\Omega}(x,\cdot)f(x)dx,$$ it follows that $\|q^\delta-q^\dagger\|_{L^2(\Omega)}=\mathcal{O}(\delta)$.
   \label{robust_recovery_internal} 
\end{proposition}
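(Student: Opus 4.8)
The plan is to derive both assertions from the abstract robust recovery result \Cref{prop_robust_recovery_op}, applied with $N=1$, $\calH_1=\HD$, $\calH_2=\LD$ and the identification $\HDLD\cong\calB_2(\HD;\LD)$ from \Cref{sec_bochner}. Its two hypotheses are the non-degenerate source condition, which is assumed, and the injectivity of $\restriction{\Phi}{T}$ together with closedness of its range, where $T=\{u\otimes a+b\otimes q:(a,b)\in\LD\times\HD\}$. Hence the only genuine work is a coercivity estimate $\|F\|_{\HDLD}\lesssim\|\Phi F\|_{\mathcal{H}}$ for every $F\in T$, which yields injectivity and closed range simultaneously. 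This is the counterpart of \Cref{lemma closed range infinite}, but I expect it to be considerably shorter, since the operator $\Phi$ of \eqref{eq:Phi-internal} is far simpler than in the Calder\'on case.

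To prove the estimate, take $F=u\otimes a+b\otimes q\in T$, i.e.\ $F(x,y)=u(x)a(y)+b(x)q(y)$. The expression of $\Phi_2$ gives $\Phi_2F=\bigl(\int_\Omega a\bigr)u+\bigl(\int_\Omega q\bigr)b$; since $\int_\Omega q>0$ (because $q^\dagger>0$), with $\alpha\eqdef(\int_\Omega a)/(\int_\Omega q)$ one has $b+\alpha u=(\int_\Omega q)^{-1}\Phi_2F$, so $\|b+\alpha u\|_{\HD}\lesssim\|\Phi F\|_{\mathcal{H}}$. Setting $r\eqdef a-\alpha q\in\LD$ one can rewrite $F=u\otimes r+(b+\alpha u)\otimes q$, whence, using that $\|\cdot\|_{\HDLD}$ is the Hilbert--Schmidt norm of the associated operator (\Cref{lemma isomorphism B2 and Bochner}) and that $\|u\|_{\HD}=\|q\|_{\LD}=1$,
\[
\|F\|_{\HDLD}\le\|r\|_{\LD}+\|b+\alpha u\|_{\HD},
\]
so it remains to control $\|r\|_{\LD}$. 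Restriction to the diagonal gives $\restriction{F}{\diag}=ur+(b+\alpha u)q$, and since $\Delta v_F=\restriction{F}{\diag}$ one gets $ur=\Delta v_F-(b+\alpha u)q$. Here $\|\Delta v_F\|_{\LD}=\|v_F\|_{\HcapH}\le\|\Phi F\|_{\mathcal{H}}$ (recall $\HcapH$ carries the $L^2$-norm of the Laplacian), while the Sobolev embedding $\HD\hookrightarrow L^\infty(\Omega)$, valid since $d\le3$, gives $\|(b+\alpha u)q\|_{\LD}\le\|b+\alpha u\|_{L^\infty(\Omega)}\lesssim\|b+\alpha u\|_{\HD}\lesssim\|\Phi F\|_{\mathcal{H}}$. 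Thus $\|ur\|_{\LD}\lesssim\|\Phi F\|_{\mathcal{H}}$, and since $u=u^\dagger/\|u^\dagger\|_{\HD}$ has a strictly positive lower bound on $\overline{\Omega}$ — because $u^\dagger$ is positive on $\overline{\Omega}$ by the strong maximum principle — one concludes $\|r\|_{\LD}\le(\min_{\overline{\Omega}}u)^{-1}\|ur\|_{\LD}\lesssim\|\Phi F\|_{\mathcal{H}}$. The first assertion then follows from \Cref{prop_robust_recovery_op}: $\|u^\delta-u^\dagger\|_{\HD}\le\delta$ implies $\|\m^\delta-\m\|_{\mathcal{H}}\le\delta\sqrt{1+(\int_\Omega q^\dagger)^2}$, so the abstract result applies with $\lambda=c\delta$ after rescaling the constants.

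For the second assertion I would argue as in the second part of the proof of \Cref{robust_recovery infinite}. Since $F^\dagger=u^\dagger\otimes q^\dagger$ we have $\restriction{F^\dagger}{\partial\Omega}(x,y)=f(x)q^\dagger(y)$, hence $q^\dagger=\bigl(\int_{\partial\Omega}|f|^2\bigr)^{-1}\int_{\partial\Omega}\restriction{F^\dagger}{\partial\Omega}(x,\cdot)f(x)\,dx$ as in the exact recovery proposition above. Subtracting, then applying the Cauchy--Schwarz inequality in the boundary variable together with Fubini's theorem,
\[
\|q^\delta-q^\dagger\|_{\LD}^2\le\bigl(\int_{\partial\Omega}|f|^2\bigr)^{-1}\,\|\restriction{F^\delta}{\partial\Omega}-\restriction{F^\dagger}{\partial\Omega}\|_{L^2(\partial\Omega;\LD)}^2,
\]
and the boundedness of the trace operator $\HDLD\to L^2(\partial\Omega;\LD)$ gives $\|q^\delta-q^\dagger\|_{\LD}\lesssim\|F^\delta-F^\dagger\|_{\HDLD}=\mathcal{O}(\delta)$.

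The only real obstacle is the coercivity estimate on $T$; everything else is a direct application of results already established. Within it, the one delicate point is passing from $\|ur\|_{\LD}$ to $\|r\|_{\LD}$, which relies on the positivity of $u^\dagger$ up to the boundary — this is precisely where the benign structure of the internal-measurements problem replaces the compactness and unique-continuation arguments used for \Cref{lemma closed range infinite} in the Calder\'on setting. The remaining ingredients (elliptic regularity for $v_F$, the Sobolev embedding, the vector-valued trace estimate) are routine given that $d\le3$ and $\partial\Omega$ is $C^2$.
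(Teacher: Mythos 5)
Your proof is correct and follows essentially the same route as the paper: both assertions are reduced to \Cref{prop_robust_recovery_op} via a coercivity estimate on $T$ (the paper's \Cref{lemma_apriori_estimate}), obtained by writing $F=u\otimes(a-\alpha q)+(b+\alpha u)\otimes q$ with $\alpha=\int_\Omega a/\int_\Omega q$, using $\Phi_2$ to control $b+\alpha u$, the diagonal restriction $\Delta v_F=ur+(b+\alpha u)q$ to control $r=a-\alpha q$, and the positivity of $u^\dagger$ on $\overline\Omega$; the second assertion then follows as in \Cref{robust_recovery infinite}. The only departure is the H\"older split in the diagonal step: you bound $\|(b+\alpha u)q\|_{\LD}\le\|b+\alpha u\|_{L^\infty}\|q\|_{\LD}$ invoking the Sobolev embedding $\HD\hookrightarrow L^\infty(\Omega)$ (valid for $d\le3$), whereas the paper uses $\|(b+\alpha u)q\|_{\LD}\le\|q\|_{L^\infty}\|b+\alpha u\|_{\LD}$, which avoids the embedding since $q^\dagger\in L^\infty(\Omega)$ by assumption and yields the explicit constant $C_\Phi$ in \cref{def_cphi}; either choice works and the conclusion is unchanged.
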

\begin{proof}
The first part of the statement follows by
 \Cref{prop_robust_recovery_op} and by the fact that $\restriction{\Phi}{T}$ is injective and has closed range (\Cref{lemma_apriori_estimate} below).
 The second part of the statement can be easily proved analogously to the corresponding estimate in \Cref{robust_recovery infinite}.
\end{proof}

We now turn to the proof that $\restriction{\Phi}{T}$ is injective and has closed range. We achieve this by proving an a priori estimate in \Cref{lemma_apriori_estimate}. Although \Cref{lemma_apriori_estimate} implies that $\restriction{\Phi}{T}$ is injective, we first give a proof of the latter for the sake of clarity. Then, the proof of \Cref{lemma_apriori_estimate} consists in making the same reasoning quantitative. 
\begin{lemma}\label{lemma injectivity of Phi_T}
    The operator $\Phi$ defined in \eqref{eq:Phi-internal} is injective on $T=\{u\otimes a+b\otimes q:(a,b)\in\ L^2(\Omega)\times H^2(\Omega)\}$.
\end{lemma}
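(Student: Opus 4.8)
The plan is to take $F\in T$ with $\Phi F=0$ and show $F=0$. Write $F=u\otimes a+b\otimes q$ for some $a\in L^2(\Omega)$ and $b\in H^2(\Omega)$. The condition $\Phi F=0$ amounts to the two equations $v_F=0$ and $\int_\Omega F(\cdot,y)\,dy=0$. Since $v_F=u_F-\tilde f$ and $\Delta v_F=\restriction{F}{\operatorname{diag}}$, the first equation gives $\restriction{F}{\operatorname{diag}}=0$ in $L^2(\Omega)$, i.e.\ $u(x)a(x)+b(x)q(x)=0$ for a.e.\ $x\in\Omega$. The second equation, computed on the rank-one pieces, reads $\bigl[\int_\Omega a\bigr]u+\bigl[\int_\Omega q\bigr]b=0$; since $\int_\Omega q^\dagger>0$ (so $\int_\Omega q\neq 0$), this yields $b=-\bigl(\int_\Omega a/\int_\Omega q\bigr)u=:\beta u$ for a scalar $\beta$. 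Substituting into the pointwise relation gives $u(x)\bigl(a(x)+\beta q(x)\bigr)=0$ a.e., and since $u=u^\dagger/\|u^\dagger\|_{H^2}$ is strictly positive on $\overline\Omega$ by the strong maximum principle (as recalled in the paragraph before the convex relaxation subsection), we conclude $a=-\beta q$ a.e.

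Then $F=u\otimes a+b\otimes q=u\otimes(-\beta q)+(\beta u)\otimes q=-\beta\,u\otimes q+\beta\,u\otimes q=0$, using bilinearity of $\otimes$. Hence $F=0$ and $\Phi$ is injective on $T$.

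The only mild subtlety is justifying that $v_F=0$ forces $\restriction{F}{\operatorname{diag}}=0$: this is immediate because $v_F$ is by definition the unique $H^1_0\cap H^2$ solution of $\Delta v_F=\restriction{F}{\operatorname{diag}}$ with zero boundary data, so $v_F=0$ makes $\Delta v_F=0$, i.e.\ $\restriction{F}{\operatorname{diag}}=0$. There is no real obstacle here; the argument is essentially the ``qualitative'' version of the a priori estimate of \Cref{lemma_apriori_estimate}, and the positivity of $u^\dagger$ on $\overline\Omega$ is the one structural fact that makes the cancellation work. The main point to keep in mind for the quantitative refinement that follows is that this step — dividing by $u$ — is where a lower bound $\inf_{\overline\Omega}u^\dagger>0$ will be needed.
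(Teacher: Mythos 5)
Your proof is correct and follows essentially the same route as the paper's: both extract $b$ as a scalar multiple of $u$ from the $\Phi_2 F=0$ constraint (using $\int_\Omega q\neq 0$), divide by $u>0$ in the pointwise relation $\restriction{F}{\operatorname{diag}}=0$ to get $a$ as the corresponding multiple of $q$, and observe the two rank-one pieces cancel. The only difference is a sign convention on the scalar; the key structural facts invoked (the formula $\Delta v_F=\restriction{F}{\operatorname{diag}}$, the positivity of $u$, and $\int_\Omega q\neq 0$) are identical.
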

\begin{proof}
A function $F=u\otimes a+ b\otimes q \in T$ belongs to the kernel of $\Phi$ if and only if 
\begin{align}
\label{kernel_cond_1}
    &u(x)a(x)+b(x)q(x)=0 \qquad \text{a.e. } x \in \Omega,  \\
    &\bigg[\int_\Omega q\bigg] b=-\bigg[\int_\Omega a\bigg] u.
    \label{kernel_cond_2}
\end{align}
From \eqref{kernel_cond_2}, we obtain $b=-[\int_\Omega a/\int_\Omega q]u$. Using \eqref{kernel_cond_1} and the positivity of $u$ in $\Omega$ (by the maximum principle \citet[Theorem~9.6]{Gilbarg2001-ry}), we get $a=[\int_\Omega a/\int_\Omega q]q$. By setting $\alpha\eqdef\int_\Omega a/\int_\Omega q$, we deduce that
\begin{equation*}
    F(x,y)=u(x)a(y)+b(x)q(y)=\alpha u(x)q(y)-\alpha u(x)q(y)=0 \qquad x\in\Omega,~\text{a.e. }y\in\Omega.
\end{equation*}
This concludes the proof.
\end{proof}
\begin{lemma}
    For every $F\in T$, we have
    \begin{equation}
        \|F\|_{\HDLD}\leq C_{\Phi}\|\Phi F\|_{\mathcal{H}},
        \label{apriori_estimate}
    \end{equation}
    where
    \begin{equation}
C_\Phi\eqdef\sqrt{2}\max\bigg(\frac{\|u^\dagger\|_{\HD}}{\inf u^\dagger},\frac{1}{\int_\Omega q^\dagger}\bigg(\|q^\dagger\|_{\LD}+\frac{\|q^\dagger\|_{L^\infty(\Omega)}\|u^\dagger\|_{\HD}}{\inf u^\dagger}\bigg)\bigg).
        \label{def_cphi}
    \end{equation}
    \label{lemma_apriori_estimate}
\end{lemma}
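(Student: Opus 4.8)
The plan is to make the proof of \Cref{lemma injectivity of Phi_T} quantitative. Let $F=u\otimes a+b\otimes q\in T$ with $a\in L^2(\Omega)$, $b\in H^2(\Omega)$, and set $g_1\eqdef v_F=\Phi_1 F\in H^1_0(\Omega)\cap H^2(\Omega)$ and $g_2\eqdef\int_\Omega F(\cdot,y)\,dy=\Phi_2 F\in H^2(\Omega)$. First I would rewrite the second equation exactly as in the injectivity proof but keeping the right-hand side: introducing $\alpha\eqdef\big[\int_\Omega a\big]/\big[\int_\Omega q^\dagger\big]$ (note $\int_\Omega q^\dagger>0$), the identity $\big[\int_\Omega q^\dagger\big]b+\big[\int_\Omega a\big]u^\dagger=\big[\int_\Omega q^\dagger\big]g_2$ (after unnormalizing; here I will be careful about the normalization constants $\|u^\dagger\|_{H^2}$ and $\|q^\dagger\|_{L^2}$ absorbed into $u,q$) gives $b=-\alpha u + \tfrac{1}{\int_\Omega q^\dagger}g_2$, whence
\begin{equation*}
\|b+\alpha u\|_{H^2(\Omega)}=\frac{1}{\int_\Omega q^\dagger}\|g_2\|_{H^2(\Omega)}=\frac{1}{\int_\Omega q^\dagger}\|\Phi_2 F\|_{H^2(\Omega)}.
\end{equation*}

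Next I would extract a bound on $a-\alpha q$. The diagonal restriction of $F$ is $\restriction{F}{\diag}=ua+bq=\Delta v_F=\Delta g_1$ pointwise a.e., and substituting $b=-\alpha u+\tfrac{1}{\int_\Omega q^\dagger}g_2$ yields $u(a-\alpha q)=\Delta g_1-\tfrac{1}{\int_\Omega q^\dagger}g_2\,q$ a.e.\ in $\Omega$. Dividing by $u^\dagger$, which is bounded below by $\inf u^\dagger>0$ on $\overline\Omega$ by the strong maximum principle, gives a pointwise bound $|a-\alpha q|\le \tfrac{1}{\inf u^\dagger}\big(|\Delta g_1|+\tfrac{1}{\int_\Omega q^\dagger}|g_2|\,|q|\big)$ (again up to the normalization constants), and hence an $L^2$ estimate
\begin{equation*}
\|a-\alpha q\|_{L^2(\Omega)}\lesssim \frac{1}{\inf u^\dagger}\Big(\|\Delta g_1\|_{L^2(\Omega)}+\frac{\|q^\dagger\|_{L^\infty(\Omega)}}{\int_\Omega q^\dagger}\|g_2\|_{L^2(\Omega)}\Big),
\end{equation*}
which, since the norm on $H^2(\Omega)\cap H^1_0(\Omega)$ is taken to be $\|\Delta\cdot\|_{L^2}$, is controlled by $\|\Phi_1 F\|$ and $\|\Phi_2 F\|$.

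Finally I would assemble the two pieces. Writing $F=u\otimes a+b\otimes q=u\otimes(a-\alpha q)+(b+\alpha u)\otimes q$ and using that $u,q$ are unit vectors (so $\|u\otimes w\|_{\HDLD}=\|w\|_{L^2}$ and $\|w\otimes q\|_{\HDLD}=\|w\|_{H^2}$), the triangle inequality gives
\begin{equation*}
\|F\|_{\HDLD}\le \|u^\dagger\|_{\HD}\,\|a-\alpha q\|_{L^2(\Omega)}+\|b+\alpha u\|_{H^2(\Omega)}\cdot(\text{const}),
\end{equation*}
and combining the two displayed bounds, collecting constants, and using $\|\Phi_1F\|^2+\|\Phi_2F\|^2=\|\Phi F\|_{\mathcal H}^2$ together with $\sqrt{x^2+y^2}\le x+y$ in the right direction (the $\sqrt 2$ in \eqref{def_cphi} comes from bounding $\|\Phi_1F\|+\|\Phi_2F\|\le\sqrt2\,\|\Phi F\|_{\mathcal H}$) yields \eqref{apriori_estimate} with the stated $C_\Phi$. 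The main obstacle is purely bookkeeping: tracking the normalization constants $\|u^\dagger\|_{H^2(\Omega)}$ and $\|q^\dagger\|_{L^2(\Omega)}$ that relate $u,q$ to $u^\dagger,q^\dagger$ through all the estimates so that the final constant matches \eqref{def_cphi} exactly — the genuinely analytic input (positivity of $u^\dagger$ away from $0$, and equivalence of $\|\Delta\cdot\|_{L^2}$ with the $H^2$ norm on $H^2\cap H^1_0$) is already in place.
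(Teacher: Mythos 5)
Your approach is essentially identical to the paper's: split $F = u\otimes(a-\alpha q)+(b+\alpha u)\otimes q$, read off $\|b+\alpha u\|_{H^2}$ from $\Phi_2 F$, use the diagonal restriction $\Delta v_F = ua+bq$ together with the lower bound on $u^\dagger$ to control $\|a-\alpha q\|_{L^2}$, then combine via the triangle inequality and the elementary $\sqrt 2$ bound. One small caution on the normalization you flag as ``bookkeeping'': since $u,q$ are already the unit-norm versions, the correct choices are $\alpha=\int_\Omega a/\int_\Omega q$ (not $\int_\Omega q^\dagger$), $\|b+\alpha u\|_{H^2}=\|\Phi_2F\|_{H^2}/\int_\Omega q$, and in the final assembly $\|F\|_{\HDLD}\le\|a-\alpha q\|_{L^2}+\|b+\alpha u\|_{H^2}$ with \emph{no} extra factor of $\|u^\dagger\|_{H^2}$ in front of the first term (that factor appears only at the very end, when rewriting $1/\inf u$, $1/\int_\Omega q$, $\|q\|_\infty$ in terms of $u^\dagger,q^\dagger$); once you carry the normalizations this way the constant $C_\Phi$ in \eqref{def_cphi} drops out exactly as in the paper.
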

\begin{proof}
We define
\begin{equation}\label{eq expression Phi1 Phi2 internal}
    \begin{aligned}
        \Phi_1 \colon   \HDLD&\to\HcapH  \qquad\qquad& \Phi_2 \colon\HDLD&\to H^2(\Omega)\\
        F&\mapsto v_F \qquad\qquad& F&\mapsto \int_\Omega F(\cdot,y)dy.
    \end{aligned}
\end{equation}
so that $\Phi F=(\Phi_1F, \Phi_2F).$ We also recall that we endow the space $\HcapH$ with the $L^2$-norm of the Laplacian.

Now, consider $F=u\otimes a+b\otimes q\in T$ and set $\alpha\eqdef\int_\Omega a/\int_\Omega q$. Notice that $\Phi_2 F=(\int_\Omega q)(b+\alpha u)$, which yields
    \begin{equation}\label{eq bound b+alpha u internal}
        \|b+\alpha u\|_{\HD}= \frac{1}{\int_\Omega q}\|\Phi_2 F\|_{\HD}.
    \end{equation}
    Now, since $\Phi_1 F=v_F$, it holds that 
    \begin{equation*}
        \begin{aligned}
            \|\Phi_1F\|_{\HcapH}
            &=\|\Delta v_F\|_{\LD}=\|ua+bq\|_{\LD}=\|u(a-\alpha q)+(b+\alpha u)q\|_{\LD}\\
            &\geq (\inf u) \|a-\alpha q\|_{\LD}-\|q\|_{L^\infty(\Omega)}\|b+\alpha u\|_{\LD}.
        \end{aligned}
    \end{equation*}
    Using \Cref{eq bound b+alpha u internal}, we get
    \begin{equation}\label{eq bound a-alpha q internal}
        \|a-\alpha q\|_{\LD}\leq\frac{1}{\inf u}\bigg(\|\Phi_1 F\|_{\HcapH}+\frac{\|q\|_{L^\infty(\Omega)}}{\int_\Omega q}\|\Phi_2 F\|_{\HD}\bigg).
    \end{equation}
    Therefore, recalling that $\|u\|_{H^2(\Omega)}=\|q\|_{L^2(\Omega)}=1$, by \Cref{eq bound b+alpha u internal} and \Cref{eq bound a-alpha q internal} we obtain
    \begin{equation*}
    \begin{aligned}
        \|F\|_{\HDLD}
        &=\|u\otimes (a-\alpha q)+(b+\alpha u)\otimes q\|_{\HDLD}\leq \|a-\alpha q\|_{\LD}+\|b+\alpha u\|_{\HD}\\
        &\leq\frac{1}{\inf u}\bigg(\|\Phi_1 F\|_{\HcapH}+\frac{\|q\|_{L^\infty(\Omega)}}{\int_\Omega q}\|\Phi_2 F\|_{\HD}\bigg)+\frac{1}{\int_\Omega q}\|\Phi_2 F\|_{\HD}\\
        &\leq\sqrt{2}\max\bigg(\frac{1}{\inf u},\frac{1}{\int_\Omega q}\bigg(1+\frac{\|q\|_{L^\infty(\Omega)}}{\inf u}\bigg)\bigg)\|\Phi F\|_{\HcapH\times\HD}.
    \end{aligned}
    \end{equation*}
    Writing the constant in terms of $q^\dagger$ and $u^\dagger$ we obtain \eqref{apriori_estimate}.
\end{proof}

\subsubsection{Dual certificate}\label{subsection dual certificate internal measurements}
In this section, we show that the non-degenerate source condition \cref{ndsc_internal} holds under the assumption that the unknown $q^\dagger$ does not vary too much. The main result we prove is the following.
\begin{proposition}\label{proposition condition on q for existence dual certificate internal measurements}
    If
    \begin{equation}
        \frac{|\Omega|}{2\inf u}\frac{\sup q - \inf q}{\big[\int_\Omega q\big]^2}<1
        \label{ineq_suff_ndsc}
    \end{equation}
    then the non-degenerate source condition \cref{ndsc_internal} holds.
    \label{suff_unique_recovery_internal}
\end{proposition}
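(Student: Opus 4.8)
The plan is to exhibit an explicit non‑degenerate dual certificate for $F^\dagger=\sigma u\otimes q$, that is, an operator $H\in\operatorname{Im}(\Phi^*)$ with $P_T H=u\otimes q$ and $\|P_{T^\perp}H\|<1$, which is exactly what \cref{ndsc_internal} asks for.

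\emph{Step 1: identification of $\operatorname{Im}(\Phi^*)$.} Write $\Phi=(\Phi_1,\Phi_2)$ as in \eqref{eq expression Phi1 Phi2 internal}. Since $\HcapH$ carries the inner product $\langle\phi,\phi'\rangle=\langle\Delta\phi,\Delta\phi'\rangle_{\LD}$ and $\Delta v_F=\restriction{F}{\diag}$, one has for $\phi\in\HcapH$, $F\in\HDLD$ that $\langle\Phi_1F,\phi\rangle=\langle\restriction{F}{\diag},\Delta\phi\rangle_{\LD}$. A direct computation shows that the adjoint of the diagonal–restriction operator \eqref{operator restriction to diag} sends $g\in\LD$ to the multiplication operator $M_g\colon a\mapsto ga$, which belongs to $\calB_2(\HD;\LD)\cong\HDLD$ because $\|M_g\|_{\HS}^2=\int_\Omega g^2K(x,x)\,dx<+\infty$ by \eqref{equation condition on K for restriction to diag}; hence $\Phi_1^*\phi=M_{\Delta\phi}$. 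Likewise, identifying $F$ with $\mathsf F\in\calB_2(\HD;\LD)$, one has $\Phi_2F=\mathsf F^*\mathbf 1$, where $\mathbf 1\in\LD$ denotes the constant function $1$, so that $\Phi_2^*\psi=\psi\otimes\mathbf 1$. Since $\Omega$ has $C^2$ boundary, $\Delta\colon\HcapH\to\LD$ is onto, and therefore $\operatorname{Im}(\Phi^*)=\{\,M_g+\psi\otimes\mathbf 1 : g\in\LD,\ \psi\in\HD\,\}$.

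\emph{Step 2: construction and optimality identities.} Set $c\eqdef\tfrac12(\sup q+\inf q)$ and $g\eqdef(q-c\mathbf 1)/u$, which lies in $L^\infty(\Omega)\subset\LD$ since $u\ge\inf u>0$ on $\overline\Omega$ by the maximum principle. Let $\psi\in\HD$ be the Riesz representative of the bounded linear functional $a\mapsto\tfrac{1}{\int_\Omega q}\bigl(\langle u,a\rangle_{\HD}-\int_\Omega gqa\bigr)$ on $\HD$ (bounded because $\lvert\int_\Omega gqa\rvert\le\|g\|_{L^\infty}\|a\|_{\HD}$), and put $H\eqdef M_g+\psi\otimes\mathbf 1\in\operatorname{Im}(\Phi^*)$. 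Using $gu=q-c\mathbf 1$ and $\int_\Omega gqu=\int_\Omega(q^2-cq)=1-c\int_\Omega q$ (recall $\|q\|_{\LD}=1$) one checks the self‑consistency $\langle\psi,u\rangle_{\HD}=c$, whence $Hu=gu+\langle\psi,u\rangle_{\HD}\mathbf 1=q$; moreover $H^*q=M_g^*q+\bigl(\int_\Omega q\bigr)\psi=u$ follows at once from the definition of $\psi$, where $M_g^*q\in\HD$ is the Riesz representative of $a\mapsto\int_\Omega gqa$. By \eqref{eq expression proj T} these two identities give $P_TH=u\otimes q$, so that $W\eqdef P_{T^\perp}H=H-u\otimes q$ satisfies $Wu=0$ and $W^*q=0$ thanks to \eqref{eq expression proj Tperp}.

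\emph{Step 3: the norm estimate and conclusion.} It remains to prove $\|W\|<1$. Since $Wu=0$ we have $\|W\|=\sup\{\|Wa\|_{\LD}:a\perp u,\ \|a\|_{\HD}=1\}$, and for such $a$ one computes $\langle\psi,a\rangle_{\HD}=-\tfrac{1}{\int_\Omega q}\langle ga,q\rangle_{\LD}$, so that
\[
Wa=ga-\frac{\langle ga,q\rangle_{\LD}}{\int_\Omega q}\,\mathbf 1=P(M_g a),
\]
where $P$ is the projection of $\LD$ onto $\{q\}^\perp$ along $\operatorname{span}\{\mathbf 1\}$. A direct computation gives $\|P\|=\|\mathbf 1\|_{\LD}/\langle\mathbf 1,q\rangle_{\LD}=\sqrt{|\Omega|}/\int_\Omega q$. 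Combining this with $\|M_ga\|_{\LD}\le\|g\|_{L^\infty}\|a\|_{\LD}\le\|g\|_{L^\infty}\|a\|_{\HD}$, with $\|g\|_{L^\infty}\le\|q-c\mathbf 1\|_{L^\infty}/\inf u=(\sup q-\inf q)/(2\inf u)$ (by the choice of $c$), and with $\int_\Omega q\le\sqrt{|\Omega|}\,\|q\|_{\LD}=\sqrt{|\Omega|}$, we obtain
\[
\|W\|\le\frac{\sqrt{|\Omega|}}{\int_\Omega q}\cdot\frac{\sup q-\inf q}{2\inf u}\le\frac{|\Omega|}{\bigl(\int_\Omega q\bigr)^2}\cdot\frac{\sup q-\inf q}{2\inf u}<1
\]
by \eqref{ineq_suff_ndsc}. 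Thus $H=u\otimes q+W$ with $Wu=0$, $W^*q=0$, $\|W\|<1$ and $H\in\operatorname{Im}(\Phi^*)$, which is precisely \cref{ndsc_internal}; equivalently, this is the statement that $H\in\operatorname{Im}(\Phi^*)$ satisfies the strict version of item (iii) of \Cref{lemma characterization subdiff at F} at $F^\dagger$.

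\emph{Main obstacle.} The technical heart is Step 1: pinning down $\operatorname{Im}(\Phi^*)$ requires care with the identification $\HDLD\cong\calB_2(\HD;\LD)$, with the reproducing‑kernel structure that makes $\restriction{\cdot}{\diag}$ well defined, and with the nonstandard inner product on $\calH_1=\HcapH$. The self‑consistency $\langle\psi,u\rangle_{\HD}=c$ in Step 2 is what lets one single $\psi$ enforce both $Hu=q$ and $H^*q=u$; and in Step 3 the key point is that $W$ is a multiplication operator followed by an \emph{oblique} projection, of norm exactly $\sqrt{|\Omega|}/\int_\Omega q$ (not $1$) — a bound obtained by the triangle inequality alone would be too lossy to reach \eqref{ineq_suff_ndsc}.
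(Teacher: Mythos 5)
Your proof is correct and follows essentially the same route as the paper's: compute $\Phi^*$ (Lemma~\ref{lemma expression of Phi* internal}), parametrize the pre-certificates satisfying $Hu=q$, $H^*q=u$ by a scalar (your $c$, the paper's $\alpha$, both ultimately set to $\tfrac12(\sup q+\inf q)$; Lemma~\ref{lemma expression of pre-certificate internal}), and then bound $\|W\|$ using Hölder with $\|a\|_{\LD}\le\|a\|_{\HD}$ and $\int_\Omega q\le\sqrt{|\Omega|}$. The one stylistic difference is that you phrase the norm estimate as $W|_{\{u\}^\perp}=P\circ M_g$, an oblique projection onto $\{q\}^\perp$ along $\operatorname{span}\{\mathbf 1\}$ composed with multiplication, and then use $\|P\|=\|I-P\|=\sqrt{|\Omega|}/\int_\Omega q$; the paper instead bounds the bilinear form $L_\alpha(a,b)=\langle Wa,b\rangle$ directly and computes $\|b-\tfrac{\int b}{\int q}q\|_{\LD}$ by hand for $b\perp q$ --- these yield identical constants (indeed $\sup_{b\perp q,\|b\|=1}\|P^*b\|=\|P^*\|=\|P\|$ since $\ker P^*=\operatorname{span}\{q\}$), so the two are the same estimate seen from different angles.
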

By \Cref{lemma positivity of u dagger} below, we have $\inf u \ge c(\Omega,\sup_\Omega q^\dagger,\inf_{\partial\Omega} f)/\|u^\dagger\|_{\HD}>0$, and so \eqref{ineq_suff_ndsc} holds beyond the trivial case of constant unknowns $q$.
For example, taking $\Omega=(0,1)$ and $q=1+q_0\mathbbm{1}_{(0.4,0.6)}$ with $f\equiv 1$, solving \cref{ip_internal_meas} numerically shows that \eqref{ineq_suff_ndsc} holds for every $q_0\in (-0.7, 0.8)$. Thus, this condition allows for moderate, and not necessarily very small, perturbations.
\begin{remark}
    An equivalent condition to \eqref{ineq_suff_ndsc} that depends more explicitly on the original unknown $q^\dagger$ (and not on its normalized version $q=q^\dagger/\|q^\dagger\|_{\LD}$) is the following:
    \begin{equation}
        \frac{|\Omega|}{2 \inf u^\dagger}\frac{\sup q^\dagger - \inf q^\dagger}{\big[\int_{\Omega}q^\dagger\big]^2}\|q^\dagger\|_{\LD}\|u^\dagger\|_{\HD}<1.
        \label{ineq_suff_ndsc_unnormalized}
    \end{equation}
    The dependence of  $\|u^\dagger\|_{\HD}$ on $q^\dagger$ can also be made more explicit. For this purpose, let $a_{\Omega}$ denote the norm of the harmonic extension operator from $H^{3/2}(\partial\Omega)$ to $H^2(\Omega)$, ${b_{\Omega}}$ be  the constant that appears in the elliptic regularity for the Poisson equation on $\Omega$, namely, ${b_{\Omega}\eqdef\mathrm{sup}~\{\|v\|_{\HD}:v\in H^1_0(\Omega),~\|\Delta v\|_{\LD}\leq 1\}}$, and $c_{\Omega}$ be the Poincaré constant of $\Omega$. Finally, defining $d_{\Omega}\eqdef a_{\Omega}\,\mathrm{max}(1,b_{\Omega}/2,b_{\Omega}c_{\Omega}^2)$, we claim  that the following condition implies \eqref{ineq_suff_ndsc_unnormalized}
    \begin{equation}
        \frac{|\Omega|}{2 \inf u^\dagger}\frac{\sup q^\dagger - \inf q^\dagger}{\big[\int_{\Omega}q^\dagger\big]^2}\|q^\dagger\|_{\LD}(1+\sup q^\dagger)^2 \|f\|_{H^{3/2}(\partial\Omega)}d_{\Omega}<1.
        \label{ineq_suff_ndsc_final}
    \end{equation}
    The proof of this fact is at the end of this section.
\end{remark}
It is worth observing that  $\inf u^\dagger\ge c>0$, where $c$ depends only on $\Omega$, $f$ and $\sup q^\dagger$.
\begin{lemma}
    Let $Q\eqdef \sup_{\Omega}q^\dagger>0$ and $f_0\eqdef\inf_{\partial\Omega}f>0$. There exists $c=c(\Omega,Q,f_0)>0$ such that $u^\dagger\geq c>0$ in $\overline{\Omega}$.
    \label{lemma positivity of u dagger}
\end{lemma}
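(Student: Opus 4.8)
\emph{Proof plan.} The plan is to bound $u^\dagger$ from below by a fixed barrier that depends only on $\Omega$, $Q$ and $f_0$, and then to bound that barrier away from zero.

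\emph{Step 1 (the barrier).} Since $Q>0$, the bilinear form $(v,w)\mapsto\int_\Omega(\nabla v\cdot\nabla w+Qvw)$ is coercive on $H^1_0(\Omega)$, so the problem $-\Delta w+Qw=0$ in $\Omega$, $w=f_0$ on $\partial\Omega$ has a unique solution $w$; by elliptic regularity (the boundary is $C^2$ and the constant datum $f_0$ belongs to $H^{3/2}(\partial\Omega)$) we have $w\in H^2(\Omega)\hookrightarrow C(\overline\Omega)$ since $d\le 3$, and $w\in C^\infty(\Omega)$ by interior regularity. By construction $w$ depends only on $\Omega$, $Q$ and $f_0$.

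\emph{Step 2 (comparison).} I would use that $u^\dagger>0$ on $\overline\Omega$ (strong maximum principle, as already recalled) and that $q^\dagger\le Q$ a.e., so that
\[
-\Delta(u^\dagger-w)+Q(u^\dagger-w)=(Q-q^\dagger)u^\dagger\ge 0\quad\text{in }\Omega,\qquad u^\dagger-w=f-f_0\ge 0\quad\text{on }\partial\Omega.
\]
Applying the weak maximum principle for the operator $-\Delta+Q$ (valid since $Q\ge 0$) yields $u^\dagger\ge w$ in $\Omega$. Then, applying the same maximum principle to $w$ itself ($-\Delta w+Qw=0\ge 0$, $w=f_0\ge 0$ on $\partial\Omega$) gives $w\ge 0$ in $\overline\Omega$.

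\emph{Step 3 (positivity of the barrier and conclusion).} Suppose $\min_{\overline\Omega}w=0$. Since $w=f_0>0$ on $\partial\Omega$, this minimum is attained at an interior point, where $w$ equals its (nonpositive) minimum over $\overline\Omega$; as $\Omega$ is connected and $w\in C^\infty(\Omega)$ solves $-\Delta w+Qw=0$ with $Q\ge 0$, the strong maximum principle then forces $w\equiv 0$, contradicting $w=f_0$ on $\partial\Omega$. Hence $c\eqdef\min_{\overline\Omega}w>0$, and $c$ depends only on $\Omega$, $Q$ and $f_0$. Combining with Step~2 gives $u^\dagger\ge w\ge c$ on $\overline\Omega$, which is the claim.

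The argument is entirely standard; the only point requiring care is that the constant $c$ be independent of every feature of $q^\dagger$ other than $Q=\sup_\Omega q^\dagger$, and this is precisely what replacing $q^\dagger$ by its upper bound $Q$ in the equation achieves — it keeps $u^\dagger$ a valid supersolution of $-\Delta+Q$ while making the comparison function $w$ depend only on $Q$ and $f_0$. If an explicit value of $c$ were desired, one could further compare $w$ from below with an explicit solution of $-\Delta+Q$ on a ball containing $\Omega$, but this is not needed for the statement.
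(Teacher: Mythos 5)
Your proof is correct and takes essentially the same approach as the paper: construct the barrier $w$ (called $u_Q$ in the paper) solving $-\Delta w + Qw = 0$ with constant boundary datum $f_0$, show $u^\dagger \ge w$ via the weak maximum principle applied to $-\Delta + Q$, and conclude $\min_{\overline\Omega} w > 0$ via the strong maximum principle. The only difference is cosmetic: you unpack the positivity of the barrier in a short contradiction argument, whereas the paper simply cites Gilbarg--Trudinger Theorem 3.5 and notes that $u_Q$ is not constant.
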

\begin{proof}
   Let $u_Q\in C^2(\Omega)\cap C(\overline{\Omega})$ the solution to $-\Delta u_Q+Qu_Q=0$ in $\Omega$ and $u_Q=f_0$ on $\partial\Omega$. In particular, since $u_Q$ is not constant, the strong maximum principle for classical solutions \cite[Theorem 3.5]{Gilbarg2001-ry} ensures that $c\eqdef\min_{\overline{\Omega}} u_Q>0$. Now, notice that 
    \[
        (-\Delta+Q)(u^\dagger-u_Q)=-\Delta u^\dagger+Qu^\dagger=(Q-q)u^\dagger\ge0,\quad\text{by the positivity of $u^\dagger$,}
    \]
    and that $u^\dagger\ge u_Q$ on $\partial\Omega$.
    Hence, it follows from \citet[Theorem 9.6]{Gilbarg2001-ry} that $u^\dagger\ge u_Q$ in $\overline{\Omega}$, which ensures
    \(
        \inf_{\overline{\Omega}} u^\dagger\ge c>0.
    \)
\end{proof}
For the sake of clarity, we split the proof of \Cref{suff_unique_recovery_internal} into different lemmas. We first need an expression of the adjoint of the measurement operator $\Phi$.
\begin{lemma}
    It holds that
    \begin{equation*}
    \begin{aligned}
        \Phi^* \colon\HcapH \times H^2(\Omega) &\to\HDLD\\
        (v,w) &\mapsto [x\mapsto w(x)+\Delta v(\cdot)K(x,\cdot)].
    \end{aligned}
\end{equation*}
    \label{lemma expression of Phi* internal}
\end{lemma}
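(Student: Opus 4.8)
The plan is to verify directly that the stated formula is the adjoint of $\Phi$. Since $\Phi$ is a bounded linear operator between Hilbert spaces --- its continuity follows from elliptic regularity for the Dirichlet--Laplace problem, the boundedness of the diagonal restriction \eqref{operator restriction to diag}, and Fubini for the second component --- the adjoint $\Phi^*$ exists and is characterized as the unique bounded operator with $\langle \Phi F,(v,w)\rangle_{\mathcal H}=\langle F,\Phi^*(v,w)\rangle_{\HDLD}$ for all $F\in\HDLD$ and $(v,w)\in\HcapH\times H^2(\Omega)$. So it suffices to (i) check that $R(v,w)\colon(x,y)\mapsto w(x)+\Delta v(y)K(x,y)$ defines an element of $\HDLD$, and (ii) verify the adjunction identity on a dense set.

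For (i), the summand $(x,y)\mapsto w(x)$ belongs to $\HDLD$ since $w\in H^2(\Omega)$ and it is constant in $y$. For the kernel summand, differentiating in $x$ yields $D^\alpha_x[\Delta v(y)K(x,y)]=\Delta v(y)\,D^\alpha_x K(x,y)$ for $|\alpha|\le 2$ (legitimate because $K(\cdot,y)\in H^2(\Omega)$ for every $y$, interpreting derivatives in the Bochner sense), and hence
\[
\int_\Omega\!\!\int_\Omega |\Delta v(y)|^2|D^\alpha_x K(x,y)|^2\,dx\,dy=\int_\Omega |\Delta v(y)|^2\,\|D^\alpha_x K(\cdot,y)\|_{L^2(\Omega)}^2\,dy\le \Big(\sup_{y\in\Omega}\|K_y\|_{H^2(\Omega)}\Big)^{2}\|\Delta v\|_{L^2(\Omega)}^2<\infty,
\]
the uniform bound being exactly \eqref{equation condition on K for restriction to diag}, which holds for $K$ by translation invariance (in the $H^2$ norm). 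Thus $R(v,w)\in\HDLD$.

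For (ii), I would test against rank-one tensors $F=a\otimes b$ with $a\in H^2(\Omega)$ and $b\in L^2(\Omega)$, whose span is dense in $\HDLD\cong\calB_2(H^2(\Omega);L^2(\Omega))$. On such $F$ the reproducing property $\langle a,K_y\rangle_{H^2(\Omega)}=a(y)$ and the definition of the diagonal restriction give $\restriction{F}{\diag}=ab$, so $\Delta v_F=ab$, while $\Phi_2 F=(\int_\Omega b)\,a$; using that $\HcapH$ carries the $L^2$-norm of the Laplacian, one computes $\langle\Phi F,(v,w)\rangle_{\mathcal H}=\langle ab,\Delta v\rangle_{L^2(\Omega)}+(\int_\Omega b)\langle a,w\rangle_{H^2(\Omega)}$. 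On the other side, the elementary Fubini identity $\langle a\otimes b,G\rangle_{\HDLD}=\int_\Omega b(y)\,\langle a,G(\cdot,y)\rangle_{H^2(\Omega)}\,dy$, applied with $G=R(v,w)$ so that $G(\cdot,y)=w+\Delta v(y)K_y$, together with $\langle a,K_y\rangle_{H^2(\Omega)}=a(y)$, produces the same expression. By bilinearity and density (both sides are continuous in $F$), the adjunction identity extends to all $F\in\HDLD$, which yields $\Phi^*=R$.

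The main obstacle is the bookkeeping around the diagonal restriction: one must make sure that, through the isometry $\HDLD\cong\LDHD$ and the reproducing-kernel structure of $H^2(\Omega)$ (valid since $d\le 3$), the operator $\restriction{\cdot}{\diag}$ applied to a tensor $a\otimes b$ is precisely the pointwise product $ab$, and that all the Fubini exchanges above are permissible --- which they are, by Cauchy--Schwarz and the uniform bound $\sup_{y}\|K_y\|_{H^2(\Omega)}<\infty$. The remaining manipulations of the Hilbert-space inner products are routine.
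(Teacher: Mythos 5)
Your proof is correct and takes essentially the same route as the paper: both identify $\Phi^*$ by a direct computation using the reproducing property $\langle a,K_y\rangle_{H^2(\Omega)}=a(y)$, the fact that $\HcapH$ carries the $L^2$ norm of the Laplacian, and Fubini. The only differences are cosmetic — you test the adjunction on rank-one tensors $a\otimes b$ and invoke density, whereas the paper works directly with a general $F$ via $\restriction{F}{\diag}(y)=\langle F(\cdot,y),K(\cdot,y)\rangle_{H^2(\Omega)}$ — and you add an explicit check that $R(v,w)$ lies in $\HDLD$, which the paper leaves implicit.
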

\begin{proof}
    Let us define $\Phi_1$ and $\Phi_2$ as in \Cref{eq expression Phi1 Phi2 internal} so that $\Phi F=(\Phi_1 F,\Phi_2 F)$ for every $F\in\HDLD$. 

    For every $v\in\HcapH$ we have $\Phi_1^*v\colon x\mapsto\Delta v(\cdot) K(x,\cdot)$, where $K$ is the reproducing kernel of $H^2(\Omega)$.
    Indeed, we have: 
    \begin{equation*}
        \begin{aligned}
            \langle v_F,v\rangle_{\HcapH}&=\int_\Omega \Delta v_F(y) \Delta v(y)dy\\
            &=\int_\Omega \restriction{F}{\diag}(y) \Delta v(y)dy\\
            &=\int_\Omega\langle F(\cdot,y),K(\cdot,y)\rangle_{H^2(\Omega)}\Delta v(y)dy\\
            &=\int_{\Omega}\bigg[\sum_{|\alpha|\leq 2}\int_{\Omega} (\mathrm{D}^{\alpha} F(x))(y) (\mathrm{D}^{\alpha}H(x))(y) dx\bigg]dy\\
            &=\sum_{|\alpha|\leq 2}\int_{\Omega}\langle\mathrm{D}^{\alpha}F(x),\mathrm{D}^{\alpha} H(x)\rangle_{\LD}dx\\
            &=\langle F,H\rangle_{\HDLD},
        \end{aligned}
    \end{equation*}
    with $H\colon x\mapsto \Delta v(\cdot)K(x,\cdot)$.

    On the other hand, for every $w\in H^2(\Omega)$, we have
    \begin{equation*}
        \begin{aligned}
            \langle \Phi_2 F,w\rangle_{H^2(\Omega)}
            &=\sum_{|\alpha|\leq 2}\int_{\Omega}\mathrm{D}^{\alpha}(\Phi_2 F)(x)\mathrm{D}^{\alpha}w(x)dx\\
            &=\sum_{|\alpha|\leq 2}\int_{\Omega}\bigg[\int_{\Omega}\mathrm{D}^{\alpha}F(x,y)\mathrm{D}^{\alpha}w(x)dy\bigg]dx\\
            &=\langle F,w\otimes \mathbbm{1}\rangle_{\HDLD}.
        \end{aligned}
    \end{equation*}
    As a result, we have $\Phi_2^*w=w\otimes\mathbbm{1}$. By noticing that $\Phi^*(v,w)=\Phi_1^*(v)+\Phi^*_2(w)$ for every $(v,w)\in\HcapH\times\HD$, we conclude the proof.
\end{proof}
As a second step, we characterize the set of the \emph{pre-certificates}, namely the set of the functions $H$ in the range of $\Phi^*$ whose projections on $T$ coincide with $u\otimes q$, i.e.\ such that $Hu=q$ and $H^*q=u$.
\begin{lemma}
    A function $H\in\operatorname{Im}(\Phi^*)$ satisfies $Hu=q$ and $H^*q=u$ if and only if there exists $\alpha\in\RR$ such that the following equality holds in $\LD$ for almost every $x\in\Omega$:
    \begin{equation*}
        H(x)=\frac{1}{\int_\Omega q}u(x)+\frac{q-\alpha}{u}K_x-\frac{1}{\int_\Omega q}\int_\Omega \frac{q(y)-\alpha}{u(y)}q(y)K_y(x)dy.
    \end{equation*}
    \label{lemma expression of pre-certificate internal}
\end{lemma}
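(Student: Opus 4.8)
The plan is to combine the explicit formula for $\Phi^*$ obtained in \Cref{lemma expression of Phi* internal} with the description of how a function of $\HDLD$ acts as a Hilbert--Schmidt operator. Recall from the isomorphism of \Cref{lemma isomorphism B2 and Bochner} that, computing on a singular value decomposition, every $H\in\HDLD$ satisfies $(Ha)(y)=\langle H(\cdot,y),a\rangle_{\HD}$ for a.e.\ $y\in\Omega$ and $a\in\HD$, and $H^*b=\int_\Omega H(\cdot,y)\,b(y)\,dy$ for $b\in\LD$, the latter being a Bochner integral in $\HD$ (using also that $\HDLD$ is isomorphic to $\LDHD$, so $H(\cdot,y)\in\HD$ for a.e.\ $y$). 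Writing $H=\Phi^*(v,w)$ with $v\in\HcapH$ and $w\in\HD$, \Cref{lemma expression of Phi* internal} gives $H(x,y)=w(x)+\Delta v(y)\,K(x,y)$, that is, $H(\cdot,y)=w+\Delta v(y)\,K_y$ in $\HD$ for a.e.\ $y$, where we use the symmetry $K(x,y)=K(y,x)$.

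First I would impose $Hu=q$. By the reproducing property $\langle K_y,u\rangle_{\HD}=u(y)$ and the action formula above, $(Hu)(y)=\langle w,u\rangle_{\HD}+\Delta v(y)\,u(y)$. Setting $\alpha\eqdef\langle w,u\rangle_{\HD}$ and using that $u\geq c>0$ on $\overline{\Omega}$ (\Cref{lemma positivity of u dagger}), the condition $Hu=q$ is equivalent to $\Delta v=(q-\alpha)/u$ in $\Omega$. Since $(q-\alpha)/u\in\LD$ and $v\in\HcapH$, elliptic regularity on the $C^2$ domain $\Omega$ shows that $v$ is the unique solution of this Dirichlet problem, hence is determined by $\alpha$ alone.

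Next I would impose $H^*q=u$. The adjoint formula gives $H^*q=\big[\int_\Omega q\big]w+\int_\Omega\Delta v(y)\,q(y)\,K_y\,dy$; substituting $\Delta v(y)=(q(y)-\alpha)/u(y)$ and dividing by $\int_\Omega q\neq 0$, the equation $H^*q=u$ is equivalent to
\[
w=\frac{1}{\int_\Omega q}\,u-\frac{1}{\int_\Omega q}\int_\Omega\frac{q(y)-\alpha}{u(y)}\,q(y)\,K_y\,dy .
\]
Inserting this $w$ into $H(x)=w(x)+\frac{q-\alpha}{u}\,K_x$ (which holds since $\Delta v=(q-\alpha)/u$) gives exactly the claimed expression, proving the forward implication. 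For the converse, given $\alpha\in\RR$ one defines $v\in\HcapH$ as the solution of $\Delta v=(q-\alpha)/u$, $v|_{\partial\Omega}=0$, and $w\in\HD$ by the displayed formula; one then checks $\Phi^*(v,w)=H$, hence $H\in\operatorname{Im}(\Phi^*)$, and recovers $Hu=q$ and $H^*q=u$ by reversing the computations above. The one nontrivial identity needed for the converse is the compatibility relation $\langle w,u\rangle_{\HD}=\alpha$, which follows from $\|u\|_{\HD}=\|q\|_{\LD}=1$ together with $\int_\Omega(q-\alpha)q=1-\alpha\int_\Omega q$.

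The computations are elementary, and I do not anticipate any serious obstacle: the lemma is essentially a direct unwinding of the definition of $\Phi^*$, with the real work deferred to the subsequent analysis of when the resulting $H$ satisfies $\|W\|<1$. The points most prone to slips are keeping the two groups of variables consistent (via the symmetry of $K$), checking that $v$, $w$ and the integral $\int_\Omega\frac{q-\alpha}{u}\,q\,K_y\,dy$ genuinely lie in the relevant Sobolev spaces (using $u\geq c>0$, elliptic regularity, and $\sup_{x}\|K_x\|_{\HD}<\infty$ from \eqref{equation condition on K for restriction to diag}), and the self-consistency identity $\alpha=\langle w,u\rangle_{\HD}$ noted above.
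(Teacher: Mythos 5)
Your proof is correct and takes essentially the same route as the paper: write $H=\Phi^*(v,w)$, translate $Hu=q$ into $\Delta v=(q-\alpha)/u$ with $\alpha=\langle w,u\rangle_{\HD}$, translate $H^*q=u$ into the formula for $w$, and substitute back. One thing you add that the paper's proof leaves implicit is the converse direction and the accompanying compatibility check $\langle w,u\rangle_{\HD}=\alpha$ (your computation using $\|u\|_{\HD}=\|q\|_{\LD}=1$ and the reproducing property is correct); this makes the ``if and only if'' fully explicit, which is a small but genuine improvement in rigor over the paper's one-directional argument.
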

\begin{proof}
Let $v\in\HcapH$, $w\in \HD$ and set $H=\Phi^*(v,w)$. The relations $Hu=q$ and $H^*q=u$ yield
\begin{align}
    \label{eq A*_hu=q rewritten}
    &q(y)=\langle w,u\rangle_{H^2(\Omega)}+\Delta v(y)u(y),\qquad y\in\Omega,\\
\label{eq A_hq=u rewritten}
    &u=\bigg[\int_\Omega q\bigg] w+\int_\Omega q(y)\Delta v(y)K_y dy.
\end{align}
Set $\alpha\eqdef\langle w,u\rangle_{H^2(\Omega)}$. By \Cref{eq A*_hu=q rewritten}, using the positivity of $u$, we obtain
\begin{equation*}
    \Delta v(y)=\frac{q(y)-\alpha}{u(y)},\qquad y\in\Omega.
\end{equation*}
Plugging this expression into \Cref{eq A_hq=u rewritten}, we obtain
\begin{equation*}
    w=\frac{1}{\int_\Omega q}u-\frac{1}{\int_\Omega q}\int_\Omega \frac{q(y)-\alpha}{u(y)}q(y)K_y dy.
\end{equation*}
Using that $H=\Phi^*(v,w)$ and the expression of $\Phi^*$ provided in \Cref{lemma expression of Phi* internal}, we conclude the proof.
\end{proof}
Finally, we can proceed with the proof of \Cref{proposition condition on q for existence dual certificate internal measurements}.
\begin{proof}[Proof of \Cref{proposition condition on q for existence dual certificate internal measurements}]
Using the expression of $H$ showed in \Cref{lemma expression of pre-certificate internal}, the goal of this proof is to find $\alpha$ such that $\langle Ha,b\rangle_{\HD}<1$ for every pair of functions $(a,b)\in\HD\times \LD$ with unit norm respectively orthogonal to $u$ and $q$. We have
\begin{equation*}
        H^*b=\frac{\int_\Omega b}{\int_\Omega q}u+\int_\Omega \bigg(b(y)-\frac{\int_\Omega b}{\int_\Omega q}q(y)\bigg)\frac{q(y)-\alpha}{u(y)}K_y dy
\end{equation*}
and, using that $\langle a,u\rangle_{H^2(\Omega)}=0$ and $\langle a,K_y\rangle_{H^2(\Omega)}=a(y)$,
\begin{equation*}
    \langle Ha,b\rangle_{\LD}=\langle a,H^*b\rangle_{\HD}=\int_\Omega\bigg(b(y)-\frac{\int_\Omega b}{\int_\Omega q}q(y)\bigg)\frac{q(y)-\alpha}{u(y)}a(y) dy.
\end{equation*}
Hence, we want to prove that the quantity
\begin{equation}
    \begin{aligned}
        &\sup\limits_{(a,b)\in \HD\times \LD} & & L_{\alpha}(a,b)\eqdef\int_\Omega\bigg(b(y)-\frac{\int_\Omega b}{\int_\Omega q}q(y)\bigg)\frac{q(y)-\alpha}{u(y)}a(y) dy\\
        & \mathrm{s.t.} & & \langle a,u\rangle_{\HD}=\langle b,q\rangle_{\LD}=0,~\|a\|_{\HD}=\|b\|_{\LD}=1
    \end{aligned}
    \label{sup_quantity_dual_certif}
\end{equation}
is strictly smaller than $1$ for some $\alpha\in\RR$. Using that $\|a\|_{\LD}\leq \|a\|_{\HD}=1$, we get
\begin{equation}
    |L_{\alpha}(a,b)|\leq \bigg\|\frac{q-\alpha}{u}\bigg\|_{L^\infty(\Omega)}\bigg\|\bigg(b-\frac{\int_\Omega b}{\int_\Omega q}q\bigg)a\bigg\|_{L^1(\Omega)}\leq \bigg\|\frac{q-\alpha}{u}\bigg\|_{L^\infty(\Omega)}\bigg\|b-\frac{\int_\Omega b}{\int_\Omega q}q\bigg\|_{L^2(\Omega)}.
    \label{ineq_l_alpha_a_b}
\end{equation}
Then, since $b$ and $q$ are orthogonal in $L^2(\Omega)$ and $\|q\|_{\LD}=\|b\|_{\LD}=1$, we obtain
\begin{equation*}
    \bigg\|b-\frac{\int_\Omega b}{\int_\Omega q}q\bigg\|^2_{L^2(\Omega)}=\bigg(\frac{\int_\Omega b}{\int_\Omega q}\bigg)^2 +1. 
\end{equation*}

Let $\beta=\int_{\Omega}q$. Using the orthogonality of $b$ and $q$ and the fact that $\|b\|_{\LD}=1$, we obtain
\begin{equation*}
    \bigg|\int_{\Omega}b\bigg|^2=\bigg|\int_{\Omega}b(1-\beta q)\bigg|^2\leq \|1-\beta q\|_{\LD}^2=|\Omega|-2\beta^2+\beta^2 \|q\|_{\LD}=|\Omega|-\beta^2,
\end{equation*}
so that
\begin{equation*}
    |L_{\alpha}(a,b)|\leq \frac{|\Omega|}{\big[\int_\Omega q\big]^2}\frac{\|q-\alpha\|_{L^\infty(\Omega)}}{\inf u}.
\end{equation*}
Using that $\|q-\alpha\|_{L^{\infty}(\Omega)}$ is minimal for $\alpha=(\inf q + \sup q)/2$ and that the associated minimum value is $(\sup q^\dagger-\inf q^\dagger)/2$, we get
\begin{equation*}
    |L_{\alpha}(a,b)|\leq\|q^\dagger\|_{\LD}\|u^\dagger\|_{\HD}\frac{|\Omega|}{\big[\int_{\Omega}q^\dagger\big]^2}\frac{\sup q^\dagger - \inf q^\dagger}{2 \inf u^\dagger},
\end{equation*}
where we used $q=q^\dagger/\|q^\dagger\|_{\LD}$ and $u=u^\dagger/\|u^\dagger\|_{\HD}$. 
\end{proof}

\begin{proof}[Proof that \eqref{ineq_suff_ndsc_final} implies \eqref{ineq_suff_ndsc_unnormalized}]
We recall that $a_{\Omega}$ is the norm of the harmonic extension operator from $H^{3/2}(\partial\Omega)$ to $H^2(\Omega)$, $b_{\Omega}=\mathrm{sup}~\{\|v\|_{\HD}\,\rvert\,v\in H^1_0(\Omega),~\|\Delta v\|_{\LD}\leq 1\}$, $c_{\Omega}$ the Poincaré constant of $\Omega$ and $d_{\Omega}= a_{\Omega}\,\mathrm{max}(1,b_{\Omega}/2,b_{\Omega}c_{\Omega}^2)$.
We have to show that
\begin{equation}\label{eq_bound u^dagger internal}
    \|u^\dagger\|_{\HD}\leq d_\Omega \|f\|_{H^{3/2}(\partial\Omega)}(1+\sup q^\dagger)^2.
\end{equation}
To prove this, recalling that $u^\dagger=v^\dagger+\tilde f$, where $v^\dagger=v_{F^\dagger}$ and $\tilde f$ is the harmonic extension of $f$, we notice that $\|u^\dagger\|_{\HD}\leq \|v^\dagger\|_{\HD}+\|\tilde{f}\|_{\HD}$. Then, we observe that $$\|v^\dagger\|_{\HD}\leq b_{\Omega}\|\Delta v^\dagger\|_{\LD}=b_{\Omega}\|q^\dagger v^\dagger+q^\dagger\tilde{f}\|_{\LD}\leq b_{\Omega}\|q^\dagger\|_{\infty}(\|v^\dagger\|_{\LD}+\|\tilde{f}\|_{\LD}).$$
Therefore, we get
\begin{align}\label{eq_inequality for u^dagger proposition dual certificate internal}
    \|u^\dagger\|_{\HD}
    &\leq b_{\Omega}\|q^\dagger\|_{\infty}(\|v^\dagger\|_{\LD}+\|\tilde{f}\|_{\LD})+\|\tilde{f}\|_{\HD}\\
    &\leq b_{\Omega}\|q^\dagger\|_{\infty}\|v^\dagger\|_{\LD}+a_{\Omega}(1+b_{\Omega}\|q^\dagger\|_{\infty})\|f\|_{H^{3/2}(\partial\Omega)}.
\end{align}    
By \eqref{ip_internal_meas}, we obtain that $-\Delta v^\dagger+q^\dagger v^\dagger=-q^\dagger \tilde{f}$. By testing this PDE with $v^\dagger$, we get
$$
\|v^\dagger\|_{\LD}^2\leq c^2_{\Omega} \|\nabla v^\dagger\|_{\LD}^2 \leq c^2_{\Omega}\bigg(\|\nabla v^\dagger\|_{\LD}^2+\int_{\Omega}q^\dagger {v^\dagger}^2\bigg)=-c^2_{\Omega} \int_{\Omega} q^\dagger\tilde{f}v^\dagger\leq c^2_{\Omega}\|q^\dagger\|_{\infty}\|\tilde{f}\|_{\LD}\|v^\dagger\|_{\LD},
$$ 
which yields $\|v^\dagger\|_{\LD}\leq c^2_{\Omega}\|q^\dagger\|_{\infty}\|\tilde{f}\|_{\LD}$. Plugging this last inequality in \Cref{eq_inequality for u^dagger proposition dual certificate internal}, we obtain \Cref{eq_bound u^dagger internal} and this conclude the proof.
\end{proof}

\begin{remark}
    We stress that our bound on $L_{\alpha}(a,b)$ does not seem sharp and could be improved. This could possibly lead to significantly less restrictive conditions on $q^\dagger$. In particular, our use of the H\"older inequality twice in \eqref{ineq_l_alpha_a_b} is  suboptimal.
\end{remark}
\section{Conclusion}\label{sec:conclusion}
\paragraph{Summary.} In this work, we explored the adaptation of convex lifting techniques to the Calder\'on problem. We showed that these techniques can be generalized to allow for the recovery of rank-one operators between Hilbert spaces. Considering a toy inverse problem in which internal measurements are available, we were able to show that the unwknown coefficient, under suitable assumptions, is the unique solution of a convex optimization problem. As reconstruction methods in inverse problems for PDEs usually suffer from the problem of local convergence, this property is highly interesting, given that convex optimization problems can be solved with globally convergent algorithms. In the case of boundary measurements, we proved that a non-degenerate source condition is sufficient to obtain the same result and left the investigation of its validity for future works.

\paragraph{Numerical perspectives.} Although they were not discussed in this article, there are numerous numerical perspectives to this work. First, the numerical resolution of \eqref{primal_calderon} and its regularized counterpart should be investigated. As our estimate of the unknown is defined as one of their solutions, obtaining a reconstruction method that is implementable in practice requires reliable solvers for these problems. Furthermore, the lifting used leads to spaces of functions of several variables, and it would be interesting to investigate how the use of the Burer-Monteiro factorization (see \cite{burer-monteiro-2005,boumalDeterministicGuaranteesBurerMonteiro2020,waldspurger-waters-2020,lingLocalGeometryDetermines2025,endorBenignLandscapeBurerMonteiro2025}) may help reduce the computational complexity. Then, we stress that, in the literature on sparse or low complexity regularization, the verification of the non-degenerate source condition is known to be challenging. For this reason, several works proposed to numerically investigate its validity by computing the pre-certificate defined in \Cref{subsubsec_dual_certificate_infinite}. We think this perspective is particularly promising.

\paragraph{Identifiability from finitely many Cauchy data.}
As highlighted earlier, \Cref{proposition equivalence original prob and rank min prob infinite} strongly relies on identifiability results for potentials in the case where the DN-map is not well-defined, that is, when the Dirichlet problem does not necessarily admit a unique solution. In this case, the identifiability of potentials is well understood when the Cauchy data sets coincide; however, we are not aware of analogous results for finitely many measurements. Nevertheless, we have chosen to present the entire section using a finite number of measurements, as this approach is more directly oriented towards potential applications, leaving the question of identifiability for finitely many measurements in the case of Cauchy data sets for future work.

\paragraph{Removing the assumption that $\int_{\Omega}q^\dagger$ is known.} As underlined in \Cref{remark int on K,remark on the assumption integral q^dagger}, the assumption that the integral of the unknown coefficient is known (on which we rely in both \Cref{sec_internal_measurements,sec_calderon}) may appear artificial. The knowledge of this quantity is exploited to remove an invariance of the lifted variable. We think that finding a way to remove this assumption is highly relevant.
\section*{Acknowledgments}
The authors warmly thank Irène Waldspurger for insightful discussions about this work.

Co-funded by the European Union (ERC, SAMPDE, 101041040 and ERC, WOLF, 	101141361). Views and opinions
expressed are however those of the authors only and do not necessarily reflect those of the European
Union or the European Research Council. Neither the European Union nor the granting authority
can be held responsible for them. This work was supported by the “Gruppo Nazionale per l’Analisi Matematica, la Probabilità e le loro Applicazioni” (GNAMPA–INdAM), of which G.\ S.\ Alberti and S.\ Sanna are members.
The research was supported in part by the MIUR Excellence Department Project awarded to Dipartimento di Matematica, Università di Genova, CUP D33C23001110001. Co-funded by European Union – Next Generation EU, Missione 4 Componente 1 CUP D53D23005770006 and CUP D53D23016180001.
\bibliographystyle{apalike}
\bibliography{ref}

\begin{thebibliography}{}

\bibitem[Adams and Fournier, 2003]{adams2003sobolev}
Adams, R.~A. and Fournier, J.~J. (2003).
\newblock {\em Sobolev spaces}, volume 140.
\newblock Elsevier.

\bibitem[Alberti et~al., 2022]{alberti2022inverse}
Alberti, G.~S., Arroyo, {\'A}., and Santacesaria, M. (2022).
\newblock Inverse problems on low-dimensional manifolds.
\newblock {\em Nonlinearity}, 36(1):734.

\bibitem[Alberti and Capdeboscq, 2018]{alberti-capdeboscq-2018}
Alberti, G.~S. and Capdeboscq, Y. (2018).
\newblock {\em Lectures on elliptic methods for hybrid inverse problems}, volume~25 of {\em Cours Sp\'ecialis\'es [Specialized Courses]}.
\newblock Soci\'et\'e{} Math\'ematique de France, Paris.

\bibitem[Alberti et~al., 2025]{alberti-petit-poon-2025}
Alberti, G.~S., Petit, R., and Poon, C. (2025).
\newblock On the non-convexity issue in the radial {C}alder\'on problem.
\newblock {\em Preprint}.

\bibitem[Alberti and Santacesaria, 2022]{alberti2022infinite}
Alberti, G.~S. and Santacesaria, M. (2022).
\newblock Infinite-dimensional inverse problems with finite measurements.
\newblock {\em Archive for Rational Mechanics and Analysis}, 243:1--31.

\bibitem[Alessandrini, 1988]{alessandrini1988stable}
Alessandrini, G. (1988).
\newblock Stable determination of conductivity by boundary measurements.
\newblock {\em Applicable Analysis}, 27(1-3):153--172.

\bibitem[Alessandrini and Vessella, 2005]{alessandrini2005lipschitz}
Alessandrini, G. and Vessella, S. (2005).
\newblock Lipschitz stability for the inverse conductivity problem.
\newblock {\em Advances in Applied Mathematics}, 35(2):207--241.

\bibitem[Ammari et~al., 2017]{2017-ammari-etal}
Ammari, H., Garnier, J., Kang, H., Nguyen, L.~H., and Seppecher, L. (2017).
\newblock {\em Multi-Wave Medical Imaging}.
\newblock World Scientific (Europe).

\bibitem[Arendt and Kreuter, 2018]{arendt2016mapping}
Arendt, W. and Kreuter, M. (2018).
\newblock Mapping theorems for {S}obolev spaces of vector-valued functions.
\newblock {\em Studia Math.}, 240(3):275--299.

\bibitem[Aronszajn, 1950]{aronszajn1950theory}
Aronszajn, N. (1950).
\newblock Theory of reproducing kernels.
\newblock {\em Transactions of the American mathematical society}, 68(3):337--404.

\bibitem[Astala and P{\"a}iv{\"a}rinta, 2006]{astala2006calderon}
Astala, K. and P{\"a}iv{\"a}rinta, L. (2006).
\newblock Calder{\'o}n's inverse conductivity problem in the plane.
\newblock {\em Annals of Mathematics}, pages 265--299.

\bibitem[Bacchelli and Vessella, 2006]{bacchelli2006lipschitz}
Bacchelli, V. and Vessella, S. (2006).
\newblock Lipschitz stability for a stationary 2d inverse problem with unknown polygonal boundary.
\newblock {\em Inverse problems}, 22(5):1627.

\bibitem[Bach, 2008]{bachConsistencyTraceNorm2008}
Bach, F.~R. (2008).
\newblock Consistency of {{Trace Norm Minimization}}.
\newblock {\em Journal of Machine Learning Research}, 9(35):1019--1048.

\bibitem[Bal, 2013]{BAL-2012}
Bal, G. (2013).
\newblock Hybrid inverse problems and internal functionals.
\newblock In {\em Inverse problems and applications: inside out. {II}}, volume~60 of {\em Math. Sci. Res. Inst. Publ.}, pages 325--368. Cambridge Univ. Press, Cambridge.

\bibitem[Bal and Ren, 2011]{BAL-REN-2011}
Bal, G. and Ren, K. (2011).
\newblock {Multi-source quantitative photoacoustic tomography in a diffusive regime}.
\newblock {\em Inverse Problems}, 27(7):075003, 20.

\bibitem[Bellassoued and Yamamoto, 2017]{bellassoued2017carleman}
Bellassoued, M. and Yamamoto, M. (2017).
\newblock {\em Carleman estimates and applications to inverse problems for hyperbolic systems}, volume~8.
\newblock Springer.

\bibitem[Benning et~al., 2024]{benning-bubba-ratti-riccio-2024}
Benning, M., Bubba, T.~A., Ratti, L., and Riccio, D. (2024).
\newblock Trust your source: quantifying source condition elements for variational regularisation methods.
\newblock {\em IMA J. Appl. Math.}, 89(1):12--43.

\bibitem[Beretta et~al., 2013]{beretta2013lipschitz}
Beretta, E., De~Hoop, M.~V., and Qiu, L. (2013).
\newblock Lipschitz stability of an inverse boundary value problem for a schrödinger-type equation.
\newblock {\em SIAM Journal on Mathematical Analysis}, 45(2):679--699.

\bibitem[Beretta and Francini, 2022]{beretta2022global}
Beretta, E. and Francini, E. (2022).
\newblock Global lipschitz stability estimates for polygonal conductivity inclusions from boundary measurements.
\newblock {\em Applicable Analysis}, 101(10):3536--3549.

\bibitem[Berlinet and Thomas-Agnan, 2011]{berlinet2011reproducing}
Berlinet, A. and Thomas-Agnan, C. (2011).
\newblock {\em Reproducing kernel Hilbert spaces in probability and statistics}.
\newblock Springer Science \& Business Media.

\bibitem[Boumal et~al., 2020]{boumalDeterministicGuaranteesBurerMonteiro2020}
Boumal, N., Voroninski, V., and Bandeira, A.~S. (2020).
\newblock Deterministic {{Guarantees}} for {{Burer-Monteiro Factorizations}} of {{Smooth Semidefinite Programs}}.
\newblock {\em Communications on Pure and Applied Mathematics}, 73(3):581--608.

\bibitem[Brezis, 2011]{brezisFunctionalAnalysisSobolev2011}
Brezis, H. (2011).
\newblock {\em Functional {{Analysis}}, {{Sobolev Spaces}} and {{Partial Differential Equations}}}.
\newblock Springer New York, New York, NY.

\bibitem[Brislawn, 1988]{BrislawnKernelsTraceClass}
Brislawn, C. (1988).
\newblock Kernels of trace class operators.
\newblock {\em Proceedings of the American Mathematical Society}, 104(4):1181--1190.

\bibitem[Bubba et~al., 2023]{bubba-burger-helin-ratti-2023}
Bubba, T.~A., Burger, M., Helin, T., and Ratti, L. (2023).
\newblock Convex regularization in statistical inverse learning problems.
\newblock {\em Inverse Probl. Imaging}, 17(6):1193--1225.

\bibitem[Bukhgeim, 2008]{Bukhgeim+2008+19+33}
Bukhgeim, A.~L. (2008).
\newblock Recovering a potential from cauchy data in the two-dimensional case.
\newblock {\em Journal of Inverse and Ill-posed Problems}, 16(1):19--33.

\bibitem[Burer and Monteiro, 2005]{burer-monteiro-2005}
Burer, S. and Monteiro, R. D.~C. (2005).
\newblock Local minima and convergence in low-rank semidefinite programming.
\newblock {\em Math. Program.}, 103(3):427--444.

\bibitem[Burger and Osher, 2004]{burgerConvergenceRatesConvex2004}
Burger, M. and Osher, S. (2004).
\newblock Convergence rates of convex variational regularization.
\newblock {\em Inverse Problems}, 20(5):1411.

\bibitem[Calderón, 1980]{calderon_inverse_1980}
Calderón, A.~P. (1980).
\newblock On an inverse boundary value problem.
\newblock {\em Seminar on Numerical Analysis and its Applications to Continuum Physics}, 25:65--73.

\bibitem[Cand{\`e}s and Recht, 2009]{Candes2009-fw}
Cand{\`e}s, E.~J. and Recht, B. (2009).
\newblock Exact matrix completion via convex optimization.
\newblock {\em Found. Comut. Math.}, 9(6):717--772.

\bibitem[Candes et~al., 2013]{candes2013phaselift}
Candes, E.~J., Strohmer, T., and Voroninski, V. (2013).
\newblock Phaselift: Exact and stable signal recovery from magnitude measurements via convex programming.
\newblock {\em Communications on Pure and Applied Mathematics}, 66(8):1241--1274.

\bibitem[Caro and Rogers, 2016]{caro2016global}
Caro, P. and Rogers, K.~M. (2016).
\newblock Global uniqueness for the calder{\'o}n problem with lipschitz conductivities.
\newblock In {\em Forum of Mathematics, Pi}, volume~4, page~e2. Cambridge University Press.

\bibitem[Chai et~al., 2010]{chaiArrayImagingUsing2010}
Chai, A., Moscoso, M., and Papanicolaou, G. (2010).
\newblock Array imaging using intensity-only measurements.
\newblock {\em Inverse Problems}, 27(1):015005.

\bibitem[Conway, 2000]{conway2000course}
Conway, J. (2000).
\newblock A course in operator theory.
\newblock {\em American Mathematical Society}.

\bibitem[Ekeland and Temam, 1999]{ekeland1999convex}
Ekeland, I. and Temam, R. (1999).
\newblock {\em Convex {{Analysis}} and {{Variational Problems}}}.
\newblock SIAM.

\bibitem[Endor and Waldspurger, 2025]{endorBenignLandscapeBurerMonteiro2025}
Endor, F.~R. and Waldspurger, I. (2025).
\newblock Benign landscape for {{Burer-Monteiro}} factorizations of {{MaxCut-type}} semidefinite programs.

\bibitem[Engl et~al., 1996]{engl-hanke-neubauer-1996}
Engl, H.~W., Hanke, M., and Neubauer, A. (1996).
\newblock {\em Regularization of inverse problems}, volume 375 of {\em Mathematics and its Applications}.
\newblock Kluwer Academic Publishers Group, Dordrecht.

\bibitem[Evans, 2022]{evans2022partial}
Evans, L.~C. (2022).
\newblock {\em Partial differential equations}, volume~19.
\newblock American Mathematical Society.

\bibitem[Fadili et~al., 2019]{fadiliModelConsistencyLearning2019}
Fadili, J., Garrigos, G., Malick, J., and Peyr{\'e}, G. (2019).
\newblock Model {{Consistency}} for {{Learning}} with {{Mirror-Stratifiable Regularizers}}.
\newblock In {\em Proceedings of the {{Twenty-Second International Conference}} on {{Artificial Intelligence}} and {{Statistics}}}, pages 1236--1244. PMLR.

\bibitem[Fadili et~al., 2024]{fadiliSolutionUniquenessConvex2024}
Fadili, J., Nghia, T. T.~A., and Phan, D.~N. (2024).
\newblock Solution uniqueness of convex optimization problems via the radial cone.

\bibitem[Feldman et~al., 2019]{feldman2019calderon}
Feldman, J., Salo, M., and Uhlmann, G. (2019).
\newblock The calder{\'o}n problem--an introduction to inverse problems.
\newblock {\em Preliminary notes on the book in preparation}, 30.

\bibitem[Felisi and Rondi, 2024]{felisi-rondi-2024}
Felisi, A. and Rondi, L. (2024).
\newblock Full discretization and regularization for the {C}alder\'on problem.
\newblock {\em J. Differential Equations}, 410:513--577.

\bibitem[Fuchs, 2004]{fuchsSparseRepresentationsArbitrary2004}
Fuchs, J.-J. (2004).
\newblock On sparse representations in arbitrary redundant bases.
\newblock {\em IEEE Transactions on Information Theory}, 50(6):1341--1344.

\bibitem[Garde et~al., 2023]{garde2023series}
Garde, H., Hyv{\"o}nen, N., and Kuutela, T. (2023).
\newblock Series reversion for electrical impedance tomography with modeling errors.
\newblock {\em Inverse Problems}, 39(8):085007.

\bibitem[Gilbarg and Trudinger, 2001]{Gilbarg2001-ry}
Gilbarg, D. and Trudinger, N.~S. (2001).
\newblock {\em Elliptic partial differential equations of second order}.
\newblock Classics in mathematics. Springer, Berlin, Germany.

\bibitem[Grasmair et~al., 2011]{grasmairNecessarySufficientConditions2011}
Grasmair, M., Scherzer, O., and Haltmeier, M. (2011).
\newblock Necessary and sufficient conditions for linear convergence of {$\ell$}1-regularization.
\newblock {\em Communications on Pure and Applied Mathematics}, 64(2):161--182.

\bibitem[{Grisvard}, 1986]{Grisvard1986-ba}
{Grisvard} (1986).
\newblock {\em No.24 Elliptic Problems in Nonsmooth Domains}.
\newblock Longman Higher Education, Harlow, England.

\bibitem[Harrach, 2019]{harrach2019uniqueness}
Harrach, B. (2019).
\newblock Uniqueness and lipschitz stability in electrical impedance tomography with finitely many electrodes.
\newblock {\em Inverse problems}, 35(2):024005.

\bibitem[Harrach, 2023]{harrachCalderonProblemFinitely2023}
Harrach, B. (2023).
\newblock The {{Calder{\'o}n Problem}} with {{Finitely Many Unknowns}} is {{Equivalent}} to {{Convex Semidefinite Optimization}}.
\newblock {\em SIAM Journal on Mathematical Analysis}, pages 5666--5684.

\bibitem[Hauptmann et~al., 2017]{hauptmann2017direct}
Hauptmann, A., Santacesaria, M., and Siltanen, S. (2017).
\newblock Direct inversion from partial-boundary data in electrical impedance tomography.
\newblock {\em Inverse Problems}, 33(2):025009.

\bibitem[Hofmann et~al., 2007]{hofmannConvergenceRatesResult2007}
Hofmann, B., Kaltenbacher, B., P{\"o}schl, C., and Scherzer, O. (2007).
\newblock A convergence rates result for {{Tikhonov}} regularization in {{Banach}} spaces with non-smooth operators.
\newblock {\em Inverse Problems}, 23(3):987.

\bibitem[Isakov, 2017]{Isakov2017-qr}
Isakov, V. (2017).
\newblock {\em Inverse problems for partial differential equations}.
\newblock Applied mathematical sciences. Springer International Publishing, Cham, Switzerland, 3 edition.

\bibitem[Kaltenbacher, 2024]{kaltenbacher-2024}
Kaltenbacher, B. (2024).
\newblock Convergence rates under a range invariance condition with application to electrical impedance tomography.
\newblock {\em IMA Journal of Numerical Analysis}, page drae063.

\bibitem[Kaltenbacher et~al., 2008]{kaltenbacher-neubauer-scherzer-2008}
Kaltenbacher, B., Neubauer, A., and Scherzer, O. (2008).
\newblock {\em Iterative regularization methods for nonlinear ill-posed problems}, volume~6 of {\em Radon Series on Computational and Applied Mathematics}.
\newblock Walter de Gruyter GmbH \& Co. KG, Berlin.

\bibitem[Kindermann, 2022]{kindermann-2022}
Kindermann, S. (2022).
\newblock On the tangential cone condition for electrical impedance tomography.
\newblock {\em Electron. Trans. Numer. Anal.}, 57:17--34.

\bibitem[Klibanov, 2013]{klibanov2013carleman}
Klibanov, M.~V. (2013).
\newblock Carleman estimates for global uniqueness, stability and numerical methods for coefficient inverse problems.
\newblock {\em Journal of Inverse and Ill-Posed Problems}, 21(4):477--560.

\bibitem[Klibanov et~al., 2017]{klibanov2017globally}
Klibanov, M.~V., Kolesov, A.~E., Nguyen, L., and Sullivan, A. (2017).
\newblock Globally strictly convex cost functional for a 1-d inverse medium scattering problem with experimental data.
\newblock {\em SIAM Journal on Applied Mathematics}, 77(5):1733--1755.

\bibitem[Klibanov et~al., 2019]{klibanovConvexificationElectricalImpedance2019}
Klibanov, M.~V., Li, J., and Zhang, W. (2019).
\newblock Convexification of electrical impedance tomography with restricted {{Dirichlet-to-Neumann}} map data.
\newblock {\em Inverse Problems}, 35(3):035005.

\bibitem[Klibanov and Timonov, 2004]{klibanov2004carleman}
Klibanov, M.~V. and Timonov, A.~A. (2004).
\newblock {\em Carleman estimates for coefficient inverse problems and numerical applications}, volume~46.
\newblock Walter de Gruyter.

\bibitem[Knudsen et~al., 2009]{knudsen2009regularized}
Knudsen, K., Lassas, M., Mueller, J.~L., and Siltanen, S. (2009).
\newblock Regularized d-bar method for the inverse conductivity problem.
\newblock {\em Inverse Problems and Imaging}, 35(4):599.

\bibitem[Kohn and Vogelius, 1984]{kohn1984determining}
Kohn, R. and Vogelius, M. (1984).
\newblock Determining conductivity by boundary measurements.
\newblock {\em Communications on pure and applied mathematics}, 37(3):289--298.

\bibitem[Kreuter, 2015]{kreuter2015sobolev}
Kreuter, M. (2015).
\newblock Sobolev spaces of vector-valued functions.
\newblock {\em Ulm University Faculty of Mathematics and Economies}.

\bibitem[Kuchment, 2012]{kuchment-2012}
Kuchment, P. (2012).
\newblock Mathematics of hybrid imaging: a brief review.
\newblock In {\em The mathematical legacy of {L}eon {E}hrenpreis}, volume~16 of {\em Springer Proc. Math.}, pages 183--208. Springer, Milan.

\bibitem[Lazzaro et~al., 2024]{lazzaro-morigi-ratti-2024}
Lazzaro, D., Morigi, S., and Ratti, L. (2024).
\newblock Oracle-net for nonlinear compressed sensing in electrical impedance tomography reconstruction problems.
\newblock {\em J. Sci. Comput.}, 101(2):Paper No. 49, 29.

\bibitem[Lechleiter and Rieder, 2008]{lechleiter2008newton}
Lechleiter, A. and Rieder, A. (2008).
\newblock Newton regularizations for impedance tomography: convergence by local injectivity.
\newblock {\em Inverse problems}, 24(6):065009.

\bibitem[Li and Voroninski, 2013]{liSparseSignalRecovery2013}
Li, X. and Voroninski, V. (2013).
\newblock Sparse {{Signal Recovery}} from {{Quadratic Measurements}} via {{Convex Programming}}.
\newblock {\em SIAM Journal on Mathematical Analysis}, 45(5):3019--3033.

\bibitem[Ling, 2025]{lingLocalGeometryDetermines2025}
Ling, S. (2025).
\newblock Local {{Geometry Determines Global Landscape}} in {{Low-Rank Factorization}} for {{Synchronization}}.
\newblock {\em Foundations of Computational Mathematics}, pages 1--33.

\bibitem[Mandache, 2001]{mandache2001exponential}
Mandache, N. (2001).
\newblock Exponential instability in an inverse problem for the schr{\"o}dinger equation.
\newblock {\em Inverse Problems}, 17(5):1435.

\bibitem[McRae et~al., 2023]{mcraeOptimalConvexLifted2023}
McRae, A.~D., Romberg, J., and Davenport, M.~A. (2023).
\newblock Optimal {{Convex Lifted Sparse Phase Retrieval}} and {{PCA With}} an {{Atomic Matrix Norm Regularizer}}.
\newblock {\em IEEE Transactions on Information Theory}, 69(3):1866--1882.

\bibitem[Mueller and Siltanen, 2020]{mueller2020d}
Mueller, J.~L. and Siltanen, S. (2020).
\newblock The d-bar method for electrical impedance tomography—demystified.
\newblock {\em Inverse problems}, 36(9):093001.

\bibitem[Müller et~al., 2024]{muller_quantitative_2024}
Müller, B., Hohage, T., Fournier, D., and Gizon, L. (2024).
\newblock Quantitative passive imaging by iterative holography: the example of helioseismic holography.
\newblock {\em Inverse Problems}, 40(4):045016.
\newblock Publisher: IOP Publishing.

\bibitem[Nachman, 1988]{nachman1988reconstructions}
Nachman, A.~I. (1988).
\newblock Reconstructions from boundary measurements.
\newblock {\em Annals of Mathematics}, 128(3):531--576.

\bibitem[Nachman, 1996]{nachman1996global}
Nachman, A.~I. (1996).
\newblock Global uniqueness for a two-dimensional inverse boundary value problem.
\newblock {\em Annals of Mathematics}, pages 71--96.

\bibitem[Novak et~al., 2018]{novak2018reproducing}
Novak, E., Ullrich, M., Wo{\'z}niakowski, H., and Zhang, S. (2018).
\newblock Reproducing kernels of sobolev spaces on r d and applications to embedding constants and tractability.
\newblock {\em Analysis and Applications}, 16(05):693--715.

\bibitem[Novikov and Santacesaria, 2010]{novikov2010global}
Novikov, R. and Santacesaria, M. (2010).
\newblock A global stability estimate for the gel'fand--calder{\'o}n inverse problem in two dimensions.
\newblock {\em Journal of Inverse and Ill-Posed Problems}.

\bibitem[Reed and Simon, 1972]{reed1972methods}
Reed, M. and Simon, B. (1972).
\newblock {\em Methods of modern mathematical physics}, volume~1.
\newblock Elsevier.

\bibitem[Resmerita, 2005]{resmeritaRegularizationIllposedProblems2005}
Resmerita, E. (2005).
\newblock Regularization of ill-posed problems in {{Banach}} spaces: Convergence rates.
\newblock {\em Inverse Problems}, 21(4):1303.

\bibitem[Rockafellar, 2015]{rockafellarConvexAnalysis2015}
Rockafellar, R.~T. (2015).
\newblock {\em Convex {{Analysis}}}.
\newblock Princeton University Press.

\bibitem[Rondi, 2016]{rondi-2016}
Rondi, L. (2016).
\newblock Discrete approximation and regularisation for the inverse conductivity problem.
\newblock {\em Rend. Istit. Mat. Univ. Trieste}, 48:315--352.

\bibitem[Santacesaria, 2013]{santacesaria2013new}
Santacesaria, M. (2013).
\newblock New global stability estimates for the calder{\'o}n problem in two dimensions.
\newblock {\em Journal of the Institute of Mathematics of Jussieu}, 12(3):553--569.

\bibitem[Santacesaria, 2015]{Santacesaria+2015+51+73}
Santacesaria, M. (2015).
\newblock A {{H{\"o}lder-logarithmic}} stability estimate for an inverse problem in two dimensions.
\newblock {\em Journal of Inverse and Ill-posed Problems}, 23(1):51--73.

\bibitem[Scherzer et~al., 2008]{scherzerVariationalMethodsImaging2008}
Scherzer, O., Grasmair, M., Grossauer, H., Haltmeier, M., and Lenzen, F. (2008).
\newblock {\em Variational {{Methods}} in {{Imaging}}}.
\newblock Springer Science \& Business Media.

\bibitem[Shechtman et~al., 2015]{shechtmanPhaseRetrievalApplication2015}
Shechtman, Y., Eldar, Y.~C., Cohen, O., Chapman, H.~N., Miao, J., and Segev, M. (2015).
\newblock Phase {{Retrieval}} with {{Application}} to {{Optical Imaging}}: {{A}} contemporary overview.
\newblock {\em IEEE Signal Processing Magazine}, 32(3):87--109.

\bibitem[Siltanen et~al., 2000]{siltanen2000implementation}
Siltanen, S., Mueller, J., and Isaacson, D. (2000).
\newblock An implementation of the reconstruction algorithm ofa nachman for the 2d inverse conductivity problem.
\newblock {\em Inverse Problems}, 16(3):681.

\bibitem[Simon, 2005]{Simon2005-rp}
Simon, B. (2005).
\newblock {\em Trace Ideals and Their Applications}.
\newblock Mathematical Surveys and Monographs. American Mathematical Society, Providence, RI, 2 edition.

\bibitem[Steinwart and Christmann, 2008]{steinwart-christmann-svm}
Steinwart, I. and Christmann, A. (2008).
\newblock {\em Support vector machines}.
\newblock Information Science and Statistics. Springer, New York.

\bibitem[Sylvester and Uhlmann, 1987]{sylvester1987global}
Sylvester, J. and Uhlmann, G. (1987).
\newblock A global uniqueness theorem for an inverse boundary value problem.
\newblock {\em Annals of mathematics}, pages 153--169.

\bibitem[Tataru, 1999]{tataru1999carleman}
Tataru, D. (1999).
\newblock Carleman estimates, unique continuation and applications.
\newblock {\em Notes downloadable from http://math. berkeley. edu/tataru/ucp. html}.

\bibitem[Uhlmann, 2009]{uhlmannElectricalImpedanceTomography2009}
Uhlmann, G. (2009).
\newblock Electrical impedance tomography and {{Calder{\'o}n}}'s problem.
\newblock {\em Inverse Problems}, 25(12):123011.

\bibitem[Vaiter, 2014]{vaiterLowComplexityRegularizations2014}
Vaiter, S. (2014).
\newblock {\em Low {{Complexity Regularizations}} of {{Inverse Problems}}}.
\newblock PhD thesis, Universit{\'e} Paris Dauphine - Paris IX.

\bibitem[Vaiter et~al., 2015]{vaiterModelSelectionLow2015}
Vaiter, S., Golbabaee, M., Fadili, J., and Peyr{\'e}, G. (2015).
\newblock Model selection with low complexity priors.
\newblock {\em Information and Inference: A Journal of the IMA}, 4(3):230--287.

\bibitem[Vaiter et~al., 2018]{vaiterModelConsistencyPartly2018}
Vaiter, S., Peyr{\'e}, G., and Fadili, J. (2018).
\newblock Model {{Consistency}} of {{Partly Smooth Regularizers}}.
\newblock {\em IEEE Transactions on Information Theory}, 64(3):1725--1737.

\bibitem[Waldspurger and Waters, 2020]{waldspurger-waters-2020}
Waldspurger, I. and Waters, A. (2020).
\newblock Rank optimality for the {B}urer-{M}onteiro factorization.
\newblock {\em SIAM J. Optim.}, 30(3):2577--2602.

\bibitem[Watson, 1992]{watsonCharacterizationSubdifferentialMatrix1992}
Watson, G.~A. (1992).
\newblock Characterization of the subdifferential of some matrix norms.
\newblock {\em Linear Algebra and its Applications}, 170:33--45.

\bibitem[Yosida, 1980]{Yosida1980-mb}
Yosida, K. (1980).
\newblock {\em Functional Analysis 1995}.
\newblock Grundlehren der Mathematischen Wissenschaften. Springer, Berlin, Germany, 6 edition.

\end{thebibliography}
\appendix
\section{Strong duality}
\label{appendix_strong_duality_internal}
To prove \Cref{prop_strong_dual_op}, we rewrite \cref{dual_op} as
    \begin{equation}
        \underset{p\in \mathcal{H}}{\mathrm{inf}}~\mathcal{F}(p)+\mathcal{G}(\Phi^* p),
        \tag{$\mathcal{Q}$}
        \label{predual}
    \end{equation}
    with $\mathcal{F}=-\langle \cdot,\m\rangle_\calH$
    and $\mathcal{G}=\iota_{\{\|\cdot\|\leq 1\}^N}$. The functions $\mathcal{F}$ and $\mathcal{G}$ are convex proper and lower semi-continuous on $\mathcal{H}$ and $\mathcal{B}_2(\calH_1;\calH_2)^N$.
    Moreover, the infimum in \cref{predual} is finite since for every $p\in\calH$ with $\|(\Phi^* p)_i\|\le 1$ for every $1\leq i\leq N$, we have
    \begin{equation*}
        -\langle p,\m\rangle=-\langle p,\Phi F^\dagger\rangle =-\langle \Phi^*p,F^\dagger\rangle \geq -\sum\limits_{i=1}^N\|F^\dagger_i\|_* > -\infty.
    \end{equation*}
    Finally $\mathcal{F}$ is finite at $0$ and $\mathcal{G}$ is finite and continuous at $0=\Phi^*0$. By \citet[Theorem III.4.1]{ekeland1999convex}, we hence obtain that 
    \begin{equation*}
        \begin{aligned}
        \underset{F\in\calB_2(\calH_1;\calH_2)^N}{\mathrm{sup}}~-\mathcal{F}^*(\Phi F)-\mathcal{G}^*(-F)&=\underset{F\in \calB_2(\calH_1;\calH_2)^N}{\mathrm{sup}}~-\sum\limits_{i=1}^N\|-F_i\|_*~~\mathrm{s.t.}~~\Phi F=-\m\\
        &=-\underset{F\in \calB_2(\calH_1;\calH_2)^N}{\mathrm{inf}}~\sum\limits_{i=1}^N\|F_i\|_*~~\mathrm{s.t.}~~\Phi F=\m
        \end{aligned}
    \end{equation*}
    is attained and equal to the value of \cref{predual}, which is the opposite of the value
    of \cref{dual_op}. We hence have that \cref{primal_op} has a solution and that the values of
    \cref{primal_op} and \cref{dual_op} are equal. The second part of \Cref{prop_strong_dual_op} follows by applying \citet[Proposition III.4.1]{ekeland1999convex}.

\section{Robust recovery with linear rate}
We begin with a first lemma providing a bound on the orthogonal projection of the error on $T^\perp$. Its proof is directly adapted from \citet[Lemma 6.2]{vaiterLowComplexityRegularizations2014}. This bound depends on the Bregman divergence associated to the nuclear norm between $F^{\delta}$ and $F^\dagger$, which is defined for every $H$ satisfying $H_i\in\partial \|\cdot\|_*(F_i^\dagger)$ for every $1\leq i\leq N$ by
\begin{equation*}
    D_H(F,F^\dagger)\eqdef \sum\limits_{i=1}^N\|F_i\|_*-\|F_i^\dagger\|_*-\langle H_i,F_i-F_i^\dagger\rangle,\qquad F\in\calB_2(\calH_1;\calH_2)^N.
\end{equation*}
\label{appendix_robust_recovery}
\begin{lemma}\label{lemma Bound on P_Tperp with bregman divergence}
For every $H$ such that $H_i\in\partial \|\cdot\|_*(F_i^\dagger)$ for every $1\leq i\leq N$ we have
    \begin{equation}
        \sum\limits_{i=1}^N\|P_{T_i^\perp}((F^{\delta}-F^\dagger)_i)\|_{\HS}\leq \frac{D_H(F^{\delta},F^\dagger)}{1-\underset{1\leq i\leq N}{\mathrm{max}}\|W_i\|}
        \label{bound_orth_proj_bregman}
    \end{equation}
    with $W_i=H_i-u_i\otimes v_i$ ($1\leq i\leq N$).
\end{lemma}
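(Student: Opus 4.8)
The plan is to compare the given subgradient $H$ with auxiliary subgradients of the nuclear norm that are \emph{aligned} with the error on $T^\perp$, exploiting that $\partial\|\cdot\|_*(F_i^\dagger)$ is large. Write $R_i\eqdef(F^\delta-F^\dagger)_i\in\calB_2(\calH_1;\calH_2)$ and recall from \Cref{lemma characterization subdiff at F} that $W_i=H_i-u_i\otimes v_i=P_{T_i^\perp}(H_i)$, so $W_i\in T_i^\perp$ and $\|W_i\|\le1$. If $\underset{j}{\max}\,\|W_j\|=1$ the right-hand side of \eqref{bound_orth_proj_bregman} is $+\infty$ and there is nothing to prove, so we may assume $\underset{j}{\max}\,\|W_j\|<1$; we may likewise assume $D_H(F^\delta,F^\dagger)<+\infty$, which, since each of its summands is nonnegative by the subgradient inequality, forces $\|F_i^\delta\|_*<+\infty$, hence $R_i\in\calB_1(\calH_1;\calH_2)$ and $P_{T_i^\perp}(R_i)\in\calB_1(\calH_1;\calH_2)$ for every $i$.

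Next, for each $i$ and each $\varepsilon>0$, I would produce a finite-rank operator $Z_i\in\calB_2(\calH_1;\calH_2)$ with $\|Z_i\|\le1$, $Z_iu_i=0$, $Z_i^*v_i=0$ and $\langle Z_i,P_{T_i^\perp}(R_i)\rangle\ge\|P_{T_i^\perp}(R_i)\|_*-\varepsilon$: it suffices to truncate the singular value decomposition of $P_{T_i^\perp}(R_i)$, whose singular vectors are automatically orthogonal to $u_i$ and to $v_i$ respectively because $P_{T_i^\perp}(R_i)\in T_i^\perp$ (equivalently, $P_{T_i^\perp}(R_i)u_i=0$ and $P_{T_i^\perp}(R_i)^*v_i=0$). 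By \Cref{lemma characterization subdiff at F}, the operator $\widetilde H_i\eqdef u_i\otimes v_i+Z_i$ then belongs to $\partial\|\cdot\|_*(F_i^\dagger)$, so the subgradient inequality (via \eqref{dual_formula_nuclear_norm}) gives $\|F_i^\delta\|_*\ge\|F_i^\dagger\|_*+\langle\widetilde H_i,R_i\rangle$ for every $i$.

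Summing these $N$ inequalities and subtracting from the definition of the Bregman divergence, I would obtain
\begin{equation*}
    D_H(F^\delta,F^\dagger)\ge\sum_{i=1}^N\langle\widetilde H_i-H_i,R_i\rangle=\sum_{i=1}^N\langle Z_i-W_i,R_i\rangle=\sum_{i=1}^N\langle Z_i-W_i,P_{T_i^\perp}(R_i)\rangle,
\end{equation*}
where the last equality uses that $Z_i,W_i\in T_i^\perp$ and that $P_{T_i^\perp}$ is self-adjoint on $\calB_2(\calH_1;\calH_2)$. Bounding $\langle W_i,P_{T_i^\perp}(R_i)\rangle\le\|W_i\|\,\|P_{T_i^\perp}(R_i)\|_*$ by nuclear/operator norm duality and inserting the choice of $Z_i$, each summand is at least $(1-\|W_i\|)\|P_{T_i^\perp}(R_i)\|_*-\varepsilon$. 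Letting $\varepsilon\to0$ and then using $1-\|W_i\|\ge 1-\underset{j}{\max}\,\|W_j\|>0$ yields $D_H(F^\delta,F^\dagger)\ge\big(1-\underset{j}{\max}\,\|W_j\|\big)\sum_{i=1}^N\|P_{T_i^\perp}(R_i)\|_*$, and \eqref{bound_orth_proj_bregman} follows since $\|\cdot\|_{\HS}\le\|\cdot\|_*$.

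The only genuinely infinite-dimensional point, and the step I expect to require the most care, is the construction of the aligned certificates $Z_i$ \emph{inside} $\calB_2(\calH_1;\calH_2)$: in finite dimensions one simply takes the partial isometry from the singular value decomposition of $P_{T_i^\perp}(R_i)$, but here that operator need not be finite-rank, so one must work with finite-rank truncations and pass to the limit in $\varepsilon$ (the preliminary reduction $D_H<+\infty\Rightarrow P_{T_i^\perp}(R_i)\in\calB_1$ makes these truncations converge). Apart from this, all the manipulations follow \citet[Lemma 6.2]{vaiterLowComplexityRegularizations2014} verbatim.
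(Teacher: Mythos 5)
Your proof is correct and follows the same strategy as the paper's (itself adapted from Vaiter et al., Lemma 6.2): compare $H$ with an alternative subgradient $u_i\otimes v_i+V_i$ with $V_i\in T_i^\perp$, use it to lower-bound $D_H$ by $\sum_i\langle V_i-W_i,R_i\rangle$, and then choose $V_i$ to (nearly) align with $P_{T_i^\perp}(R_i)$. The only presentational difference is in how you realize the alignment step: the paper states and proves an abstract identity $\sup\{\sum_i\langle V_i,R_i\rangle:V_i\in T_i^\perp,\|V_i\|\le1\}=\sum_i\|P_{T_i^\perp}(R_i)\|_*$ by composing an arbitrary $\|Z_i\|\le1$ test operator with $P_{T_i^\perp}$, whereas you construct $\varepsilon$-near-optimal finite-rank partial isometries $Z_i$ explicitly by truncating the SVD of $P_{T_i^\perp}(R_i)$ and let $\varepsilon\to0$. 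Both routes are valid; yours makes the infinite-dimensional subtlety visible (the full partial isometry of an infinite-rank $P_{T_i^\perp}(R_i)$ is not Hilbert--Schmidt, so a truncation or a sup is genuinely required), while the paper's sup-formula hides it. Your preliminary reduction to $D_H<+\infty$ (which gives $P_{T_i^\perp}(R_i)\in\calB_1$) is not carried out in the paper but is harmless and in fact clarifies that the chain of inequalities is well-defined.
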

\begin{proof}
    For every $V\in T^\perp$ (that is to say $V_iv_i=0$ and $V_i^*u_i=0$ for every $1\leq i\leq N$) such that $\|V_i\|\leq 1$ for every $1\leq i\leq N$ we have that $u_i\otimes v_i+V_i$ belongs to $\partial \|\cdot\|_*(F_i^\dagger)$. This yields
        \begin{equation*}
        \begin{aligned}
        D_H(F^\delta,F^\dagger)&\geq D_H(F^\delta,F^\dagger)-D_{(u_i\otimes v_i)_{1\leq i\leq N}+V}(F^\delta,F^\dagger)\\
        &=\sum\limits_{i=1}^N\langle u_i\otimes v_i+V_i-H_i,(F^\delta-F^\dagger)_i\rangle\\
        &=\sum\limits_{i=1}^N\langle V_i-W_i,(F^\delta-F^\dagger)_i\rangle.\\
        \end{aligned}
        \end{equation*}
    We claim that, for every $1\leq i\leq N$, it holds
    \begin{equation}
        \textstyle \mathrm{sup}\,\{\sum_{i=1}^N\langle V_i,(F^{\delta}-F^\dagger)_i\rangle\,\rvert\,V\in T^\perp,~\mathrm{max}_{1\leq i\leq N}\|V_i\|\leq 1\}=\sum_{i=1}^N\|P_{T_i^\perp}((F^\delta-F^\dagger)_i)\|_*.
    \end{equation}
    To see this, we first use \eqref{eq expression proj Tperp} to get that $P_{T_i^\perp}(Z_i)=P_{v_i^\perp}Z_i P_{u_i^\perp}$ for every $Z\in\calB_2(\calH_1;\calH_2)^N$ where $P_{u_i^\perp}$ and $P_{v_i^\perp}$ are the orthogonal projections on $\mathrm{Span}(\{u_i\})^\perp$ and $\mathrm{Span}(\{v_i\})^\perp$. Since these two projection operators are non-expansive, we obtain $||P_{T_i^\perp}(Z_i)||\leq \|Z_i\|$. Consequently, we see that for every $Z$ such that $\|Z_i\|\leq 1$ for every $1\leq i\leq N$, we have $\|P_{T_i^\perp}(Z_i)\|\leq 1$ and hence
    \begin{equation*}
    \begin{aligned}
    \textstyle\sum_{i=1}^N\langle P_{T_i^\perp}((F^{\delta}-F^\dagger)_i),Z_i\rangle&\textstyle=\sum_{i=1}^N\langle P_{T_i^\perp}((F^{\delta}-F^\dagger)_i),P_{T_i^\perp}(Z)\rangle\\
    &\textstyle\leq \mathrm{sup}~\{\sum_{i=1}^N\langle P_{T_i^\perp}((F^{\delta}-F^\dagger)_i),V_i\rangle\,\rvert\,V\in T^\perp,~\mathrm{max}_{1\leq i\leq N}\|V_i\|\leq 1\}\\
    &\textstyle=\mathrm{sup}~\{\sum_{i=1}^N\langle (F^{\delta}-F^\dagger)_i,V_i\rangle\,\rvert\,V\in T^\perp,~\mathrm{max}_{1\leq i\leq N}\|V_i\|\leq 1\}.
    \end{aligned}
    \end{equation*}
    Taking the supremum over all $Z$ such that $\|Z_i\|\leq 1$ for every $1\leq i\leq N$, we obtain
    \begin{equation*}
         \textstyle\mathrm{sup}~\{\sum_{i=1}^N\langle V_i,(F^{\delta}-F^\dagger)_i\rangle\,\rvert\,V\in T^\perp,~\mathrm{max}_{1\leq i\leq N}\|V_i\|\leq 1\}\geq\sum_{i=1}^N\|P_{T_i^\perp}((F^\delta-F^\dagger)_i)\|_*.
    \end{equation*}
    The reverse inequality is a direct consequence of the dual formulation of the nuclear norm \eqref{dual_formula_nuclear_norm}.
    
    As a result, we obtain
    \begin{equation*}
        \begin{aligned}
        D_H(F^\delta,F^\dagger)&\geq\sum\limits_{i=1}^N\|P_{T_i^\perp}((F^{\delta}-F^\dagger)_i)\|_* -\langle W_i,(F^\delta-F^\dagger)_i\rangle\\
        &=\sum\limits_{i=1}^N\|P_{T_i^\perp}((F^{\delta}-F^\dagger)_i)\|_* -\langle W_i,P_{T_i^\perp}((F^\delta-F^\dagger)_i)\rangle\\
            &\geq \bigg[\underset{1\leq i\leq N}{\max}(1-\|W_i\|)\bigg]\Bigg[\sum_{i=1}^N\|P_{T_i^\perp}((F^{\delta}-F^\dagger)_i)\|_*\Bigg]\\
            &\geq \bigg[\underset{1\leq i\leq N}{\max}(1-\|W_i\|)\bigg]\Bigg[\sum_{i=1}^N\|P_{T_i^\perp}((F^{\delta}-F^\dagger)_i)\|_{\HS}\Bigg].
        \end{aligned}
    \end{equation*}
\end{proof}
We can now proceed with the proof of \Cref{prop_robust_recovery_op}.
\begin{proof}[Proof of \Cref{prop_robust_recovery_op}]
We have
    \begin{equation*}
        \begin{aligned}
            \sum\limits_{i=1}^N\|(F^{\delta}-F^\dagger)_i\|_{\HS}&\leq \sum\limits_{i=1}^N\| P_{T_i}((F^{\delta}-F^\dagger)_i)\|_{\HS} + \|P_{T_i^\perp}((F^{\delta}-F^\dagger)_i)\|_{\HS}&&\\
            &\leq C_{\Phi}\|\Phi P_T(F^{\delta}-F^\dagger)\|_{\mathcal{H}}+\sum\limits_{i=1}^N\|P_{T_i^\perp}((F^{\delta}-F^\dagger)_i)\|_{\HS}&&\text{by \citet[Theorem 2.21]{brezisFunctionalAnalysisSobolev2011}},\\
            &\leq C_{\Phi}\|\Phi(F^{\delta}-F^\dagger)\|_{\mathcal{H}}+(1+C_{\Phi}\|\Phi\|)\sum\limits_{i=1}^N\|P_{T_i^\perp}((F^{\delta}-F^\dagger)_i)\|_{\HS}&&\\
            &\leq C_{\Phi}\|\Phi(F^{\delta}-F^\dagger)\|_{\mathcal{H}}+\frac{1+C_{\Phi}\|\Phi\|}{1-\underset{1\leq i\leq N}{\max}\|W_i\|}D_H(F^{\delta},F^\dagger)&&\text{by \eqref{bound_orth_proj_bregman}}.
        \end{aligned}
    \end{equation*}
    As a result, it remains only to bound $\|\Phi(F^{\delta}-F^\dagger)\|_{\mathcal{H}}$ and $D_H(F^{\delta},F^\dagger)$ in terms of $\delta$, which is the object of \Cref{lemma_bregman_rate_prediction} below.
\end{proof}
\begin{lemma}[Bregman rate and prediction error]\label{lemma Bregman rate and prediction error}
    Let $\delta$ and $c$ be two positive constants. Let $H=\Phi^*p$ be such that $H_i\in\partial \|\cdot\|_*(F_i^\dagger)$ for every $1\leq i\leq N$. Then, for any minimizer $F^\delta$ of \eqref{primal_noisy_op} with $\lambda =c\delta$ and $\m^{\delta}$ such that $\|\m^{\delta}-\m\|_{\mathcal{H}}\leq \delta$ we have
    \begin{equation*}
    \begin{aligned}
        D_H(F^\delta,F^\dagger)&\leq \frac{(1+c\|p\|_{\mathcal{H}})^2}{2c}\delta,\\
        \|\Phi F^{\delta}-\Phi F^\dagger\|_{\mathcal{H}}&\leq 2(1+c\|p\|_{\mathcal{H}})\delta.
    \end{aligned}
    \end{equation*}
    \label{lemma_bregman_rate_prediction}
\end{lemma}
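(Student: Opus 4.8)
The plan is to exploit the optimality of $F^\delta$ for \eqref{primal_noisy_op} and convert the resulting inequality into a statement about the Bregman divergence. Comparing the objective value at the minimizer $F^\delta$ with its value at the feasible point $F^\dagger$, and using that $\|\Phi F^\dagger-\m^\delta\|_{\calH}=\|\m-\m^\delta\|_{\calH}\le\delta$ together with $\lambda=c\delta$, one obtains
\[
\tfrac12\|\Phi F^\delta-\m^\delta\|_{\calH}^2+\lambda\sum_{i=1}^N\|F_i^\delta\|_*\le\tfrac{\delta^2}{2}+\lambda\sum_{i=1}^N\|F_i^\dagger\|_*.
\]
Next I would insert the Bregman divergence via its definition, $\sum_i\|F_i^\delta\|_*=\sum_i\|F_i^\dagger\|_*+\langle H,F^\delta-F^\dagger\rangle+D_H(F^\delta,F^\dagger)$, and use the key identity $\langle H,F^\delta-F^\dagger\rangle=\langle\Phi^*p,F^\delta-F^\dagger\rangle=\langle p,\Phi F^\delta-\m\rangle_{\calH}$ (which relies on $\m=\Phi F^\dagger$). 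After cancelling $\lambda\sum_i\|F_i^\dagger\|_*$ this yields
\[
\tfrac12\|\Phi F^\delta-\m^\delta\|_{\calH}^2+\lambda D_H(F^\delta,F^\dagger)+\lambda\langle p,\Phi F^\delta-\m\rangle_{\calH}\le\tfrac{\delta^2}{2}.
\]

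Then I would set $r\eqdef\Phi F^\delta-\m^\delta$, write $\Phi F^\delta-\m=r+(\m^\delta-\m)$, and apply Cauchy--Schwarz with $\|\m^\delta-\m\|_{\calH}\le\delta$ to get
\[
\tfrac12\|r\|_{\calH}^2+\lambda D_H(F^\delta,F^\dagger)\le\tfrac{\delta^2}{2}+\lambda\|p\|_{\calH}\|r\|_{\calH}+\lambda\|p\|_{\calH}\delta.
\]
Since $H_i\in\partial\|\cdot\|_*(F_i^\dagger)$ for every $i$, the Bregman divergence is nonnegative; dropping that term leaves a quadratic inequality in $\|r\|_{\calH}$, and completing the square gives $(\|r\|_{\calH}-c\delta\|p\|_{\calH})^2\le(1+c\|p\|_{\calH})^2\delta^2$, hence $\|r\|_{\calH}\le(1+2c\|p\|_{\calH})\delta$ and, by the triangle inequality, $\|\Phi F^\delta-\Phi F^\dagger\|_{\calH}\le\|r\|_{\calH}+\delta\le 2(1+c\|p\|_{\calH})\delta$. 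For the Bregman bound I would instead keep $D_H$ and maximize the right-hand side over $t=\|r\|_{\calH}\ge0$: the maximum of $-\tfrac12 t^2+\lambda\|p\|_{\calH}t+\tfrac{\delta^2}{2}+\lambda\|p\|_{\calH}\delta$ equals $\tfrac12(\lambda\|p\|_{\calH}+\delta)^2=\tfrac{\delta^2}{2}(1+c\|p\|_{\calH})^2$, so $\lambda D_H(F^\delta,F^\dagger)\le\tfrac{\delta^2}{2}(1+c\|p\|_{\calH})^2$, i.e.\ $D_H(F^\delta,F^\dagger)\le\tfrac{(1+c\|p\|_{\calH})^2}{2c}\delta$.

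The argument is essentially routine, so I do not anticipate a genuine obstacle: the only points requiring care are the algebraic identity $\langle\Phi^*p,F^\delta-F^\dagger\rangle=\langle p,\Phi F^\delta-\m\rangle_{\calH}$ (using feasibility of $F^\dagger$) and the correct bookkeeping in the quadratic-in-$\|r\|_{\calH}$ estimate, together with the observation that $D_H(F^\delta,F^\dagger)\ge 0$ because $H$ is a subgradient at $F^\dagger$. These two facts, inserted into the optimality inequality above, give both bounds simultaneously.
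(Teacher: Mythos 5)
Your proof is correct and follows essentially the same route as the paper's: both start from the optimality inequality for $F^\delta$, use $\langle H, F^\delta-F^\dagger\rangle=\langle p,\Phi F^\delta-\m\rangle_{\calH}$ to bring in the Bregman divergence, and then reduce to a quadratic inequality in $\|\Phi F^\delta-\m^\delta\|_{\calH}$. The only difference is cosmetic: where the paper closes the quadratic bound via Young's inequality (for the Bregman estimate) and a factorization (for the prediction error), you maximize the quadratic explicitly and complete the square, which are the same steps in different clothing.
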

\begin{proof}
The proof is directly adapted from \citet[Proposition 3.41]{scherzerVariationalMethodsImaging2008} and \citet[Lemma 6.1]{vaiterLowComplexityRegularizations2014}.
By optimality of $F^{\delta}$ and since $\|\Phi F^\dagger-\m^{\delta}\|_{\mathcal{H}}\leq \delta$ we have
\begin{equation}
\begin{aligned}
    \frac{1}{2}\|\Phi F^{\delta}-\m^{\delta}\|_{\mathcal{H}}^2+\lambda \sum_{i=1}^N\|F_i^{\delta}\|_*&\leq \frac{1}{2}\|\Phi F^\dagger-\m^{\delta}\|_{\mathcal{H}}^2+\lambda \sum_{i=1}^N\|F^\dagger_i\|_*\\
    &\leq \frac{\delta^2}{2}+\lambda \sum_{i=1}^N\|F^\dagger_i\|_*.
    \end{aligned}
    \label{ineq_optimality}
\end{equation}
Now, we have
\begin{equation*}
    D_H(F^{\delta},F^\dagger)=\sum_{i=1}^N\|F_i^{\delta}\|_*-\|F_i^\dagger\|_*-\langle H_i,(F^{\delta}-F^\dagger)_i\rangle=\Bigg[\sum_{i=1}^N\|F_i^{\delta}\|_*-\|F_i^\dagger\|_*\Bigg]-\langle p,\Phi F^{\delta}-\Phi F^\dagger\rangle_{\mathcal{H}}.
\end{equation*}
This yields
\begin{equation*}
\begin{aligned}
    D_H(F^{\delta},F^\dagger)&\leq \Bigg[\sum_{i=1}^N\|F_i^{\delta}\|_*-\|F_i^\dagger\|_*\Bigg]+\|p\|_{\mathcal{H}}\|\Phi F^{\delta}-\Phi F^\dagger\|_{\mathcal{H}}\\
    &\leq \Bigg[\sum_{i=1}^N\|F_i^{\delta}\|_*-\|F_i^\dagger\|_*\Bigg]w+\|p\|_{\mathcal{H}}(\|\Phi F^{\delta}-\m^{\delta}\|_{\mathcal{H}}+\delta).
    \end{aligned}
\end{equation*}
Using \eqref{ineq_optimality} we obtain
\begin{equation}
    \begin{aligned}
    D_H(F^{\delta},F^\dagger)\leq \frac{1}{2\lambda}(\delta^2 - \|\Phi F^{\delta}-\m^{\delta}\|_{\mathcal{H}}^2)+\|p\|_{\mathcal{H}}(\|\Phi F^\delta-\m^\delta\|_{\mathcal{H}}+\delta)
    \end{aligned}.
    \label{aux_bound_bregman}
\end{equation}
Now, we use $ab\leq (a^2+b^2)/2$ with $a=\sqrt{\lambda}\|p\|_{\mathcal{H}}$ and $b=\|\Phi F^{\delta}-\m^\delta\|_{\mathcal{H}}/\sqrt{\lambda}$ to get
\begin{equation*}
    \|p\|_{\mathcal{H}}\|\Phi F^{\delta}-\m^\delta\|_{\mathcal{H}}\leq \frac{\lambda}{2}\|p\|_{\mathcal{H}}^2+\frac{1}{2\lambda}\|\Phi F^\delta-\m^\delta\|_{\mathcal{H}}^2.
\end{equation*}
Injecting this in \eqref{aux_bound_bregman}, we finally obtain
\begin{equation*}
    D_H(F^\delta,F^\dagger)\leq \frac{1}{2\lambda}\delta^2+\frac{\lambda}{2}\|p\|_{\mathcal{H}}^2+\|p\|_{\mathcal{H}}\delta=\frac{1}{2\lambda}(\delta + \lambda\|p\|_{\mathcal{H}})^2=\frac{1}{2 c}(1+c\|p\|_{\mathcal{H}})^2\delta,
\end{equation*}
which yields the first inequality.

To obtain the second inequality, we notice that the non-negativity of $D_H(F^\delta,F)$ in \eqref{aux_bound_bregman} yields
\begin{equation*}
    \begin{aligned}
    0&\leq \frac{1}{2\lambda}(\delta^2 - \|\Phi F^{\delta}-\m^{\delta}\|_{\mathcal{H}}^2)+\|p\|_{\mathcal{H}}(\|\Phi F^\delta-\m^\delta\|_{\mathcal{H}}+\delta)\\
        &=(\|\Phi F^\delta-\m^\delta\|_{\mathcal{H}}+\delta)\bigg(\frac{1}{2\lambda}(\delta-\|\Phi F^\delta -\m^\delta\|_{\mathcal{H}})+\|p\|_{\mathcal{H}}\bigg),
    \end{aligned}
\end{equation*}
which gives $\|\Phi F^\delta-\m^\delta\|_{\mathcal{H}}\leq 2\lambda\|p\|_{\mathcal{H}}+\delta$. Since $\|\Phi F^\delta -\Phi F^\dagger\|_{\mathcal{H}}\leq \|\Phi F^\delta-\m^\delta\|_{\mathcal{H}}+\delta$ we can finally conclude.
\end{proof}

\section{Proof of Lemma~\ref{lemma isomorphism B2 and Bochner}}
\label{appendix_proof_isomorphism}
Let $\{e_i\}_{i\in\NN}$ and $\{f_j\}_{j\in\NN}$ be orthonormal bases of $\Hm$ and $\calH$, respectively. We prove that $\{e_i\otimes f_j\}_{i,j\in\NN}$ forms an orthonormal basis of $\calB_2(\Hm;\calH)$ and that $\{\mathcal{J}(e_i\otimes f_j)\}_{i,j\in\NN}$ is an orthonormal basis of $\HmH$.

Let us start with the first part of the proof. Choosing $\{e_i\}_{i\in\NN}$ as the orthonormal basis of $\Hm$ in the definition of the inner product in $\calB_2(\Hm;\calH)$ \eqref{definition inner product B2}, we have
$$
    \langle e_i\otimes f_j,e_h\otimes f_k\rangle=\langle f_j,f_k\rangle_{\calH}\sum_{l\in\NN}\langle e_i,e_l\rangle_{\Hm}\langle e_h,e_l\rangle_{\Hm}=
    \begin{cases}
        1~&\text{if}~(i,j)=(h,k),\\
        0~&\text{otherwise.}
    \end{cases}
$$
Thus $\{e_i\otimes f_j\}_{i,j\in\NN}$ is an orthonormal system in $\calB_2(\Hm;\calH)$. Let us now prove the completeness. Let $G\in\calB_2(\Hm;\calH)$ be such that $\langle G,e_i\otimes f_j\rangle=0$ for every $i,j\in\NN$. Thus, we have
\begin{equation*}
    0=\langle G,e_i\otimes f_j\rangle=\sum_{l\in\NN}\langle e_i,e_l\rangle_{\Hm}\langle Ge_l,f_j\rangle_{\calH}=\langle G e_i,f_j\rangle_\calH,\qquad i,j\in\NN.
\end{equation*}
Since $\{f_j\}_{j\in\NN}$ is an orthonormal basis for $\calH$, we can conclude that $Ge_i=0$ for every $i$. By the completeness of the family $\{e_i\}_{i\in\NN}$ in $\Hm$, we conclude that $G=0$. 

Now let us set $H_{i,j}=\mathcal{J}(e_i\otimes f_j)$. By the definition of the inner product in $\HmH$, we notice that $\{H_{i,j}\}_{i,j\in\NN}$ forms an orthonormal system in $H^m(\Omega;\calH)$. Indeed, we have
\begin{align*}
    \langle H_{i,j},H_{h,k}\rangle_{\HmH}
    &=\langle f_j,f_k\rangle_\calH\sum_{\lvert\alpha\rvert\leq m}\langle\operatorname{D}^\alpha e_i,\operatorname{D}^\alpha e_h\rangle_{\LD}\\
    &=\langle f_j,f_k\rangle_\calH\langle e_i,e_h\rangle_{\Hm}=
    \begin{cases}
        1~&\text{if}~(i,j)=(h,k),\\
        0~&\text{otherwise.}
    \end{cases}
\end{align*}    
Moreover, it turns out that it is also complete. Indeed, let $H\in H^m(\Omega;\calH)$ be such that $H$ is orthogonal to $H_{i,j}$ for every $i,j\in\NN$, then
\begin{equation}\label{eq identity in proof bochner spaces}
    0=\langle H, H_{i,j}\rangle_{H^m(\Omega;\calH)}=\sum\limits_{\lvert\alpha\rvert\leq m}\int_\Omega \operatorname{D}^\alpha e_i(x) \langle \operatorname{D}^\alpha H(x), f_j\rangle_\calH\, dx.
\end{equation}
For every $j\in\NN$ consider the function
$$
H_j(x)=\langle H(x),f_j\rangle_\calH,\qquad \text{a.e.\ }x\in\Omega.
$$
We have that  $H_j\in H^m(\Omega)$ and that $\operatorname{D}^\alpha H_j(x) = \langle \operatorname{D}^\alpha H(x),f_j\rangle_\calH$ a.e.\ $x\in\Omega$. Indeed we have
\begin{equation*}
\begin{aligned}
    \langle\operatorname{D}^\alpha H_j,\varphi\rangle 
    &= (-1)^{\lvert\alpha\rvert}\int_\Omega\langle H(x),f_j\rangle_\calH \operatorname{D}^\alpha \varphi(x)dx \\
    &= \bigg\langle (-1)^{\lvert\alpha\rvert} \int_\Omega H(x) \operatorname{D}^\alpha \varphi(x)dx,f_j\bigg\rangle_\calH \\
    &= \bigg\langle \int_\Omega \operatorname{D}^\alpha H(x)  \varphi(x)dx,f_j\bigg\rangle_\calH \\
    &=\int_\Omega \langle \operatorname{D}^\alpha H(x),f_j \rangle_\calH \varphi(x)dx, \qquad \text{for every}~\varphi\in \mathcal{C}_c^\infty(\Omega),
    \end{aligned}
\end{equation*}
where in the second and in the last equalities we used the Bochner theorem \citet[Section V.5, Corollary 2]{Yosida1980-mb}.
Therefore, from \eqref{eq identity in proof bochner spaces} we obtain
\begin{equation*}
    0=\langle H, H_{i,j}\rangle_{H^m(\Omega;\calH)}=\sum\limits_{\lvert\alpha\rvert\leq m}\int_\Omega \operatorname{D}^\alpha e_i(x) \operatorname{D}^\alpha H_j(x) dx=\langle e_i,H_j\rangle_{H^m(\Omega)}.
\end{equation*}
Using that $\{e_i\}_{i\in\NN}$ is an orthonormal basis for $H^m(\Omega)$, we can conclude $H_j=0$, i.e.\ $\langle H(x),f_j\rangle_\calH =0$ a.e.\ $x\in\Omega$. Now, since $\{f_j\}_{j\in\NN}$ is an orthonormal basis for $\calH$, then we can conclude $H=0$.

\section{Fr\'echet derivative of the forward map for the Calder\'on problem}
\label{appendix_frechet_derivative}
In this section, we compute and prove the injectivity  of the Fréchet derivative of the forward map $\Psi$ associated to the Calder\'on problem, defined by
\begin{equation*}
\begin{aligned}
        \Psi\colon A\subseteq L^\infty(\Omega)&\to \calB(H^{1/2}(\partial\Omega),H^{-1/2}(\partial\Omega))\\
        q&\mapsto \Lambda_q,
\end{aligned}
\end{equation*}
where $A\eqdef\{q\in L^\infty(\Omega): 0~\mathrm{is~not~an~eigenvalue~of~}-\Delta +q\}$ and $\Lambda_q$ denotes the Dirichlet-to-Neumann map defined in \Cref{equation DN map expression}. These are classical results that we decided to include for the sake of completeness. The proofs are adapted from \citet[Sections 2 and 3]{lechleiter2008newton}, where the same results are shown in the case of the diffusion equation $-\operatorname{div}(\sigma\nabla u)=0$ with current-to-voltage measurements (Neumann-to-Dirichlet map).
In \Cref{lemma compactness F'}, we also prove the compactness of $\Psi'[q](h)$ for every $q\in A$ and $h\in L^\infty(\Omega)$.
In the following proofs we denote  the duality pairing between $H^{1/2}(\partial\Omega)$ and $H^{-1/2}(\partial\Omega)$ by $\langle\cdot,\cdot\rangle$, and, for every $\varphi\in H^{1/2}(\partial\Omega)$, we denote by $e_\varphi\in\HU$ the unique weak solution to
\begin{equation*}\left\{
    \begin{aligned}
        -\Delta e_\varphi+qe_\varphi &=0&&\mathrm{in}~\Omega,\\
        e_\varphi&=\varphi&&\mathrm{on}~\partial\Omega.
    \end{aligned}\right.
\end{equation*}

\begin{lemma}\label{lemma expression and injectivity F'}
    For every $q\in A$ and $h\in L^\infty(\Omega)$ it holds that
    \begin{equation}
    \begin{aligned}
        \Psi'[q](h)\colon H^{1/2}(\partial\Omega)&\to H^{-1/2}(\partial\Omega)\\
        f&\mapsto \restriction{\partial_\nu v}{\partial\Omega},
    \end{aligned}
        \label{equation frechet derivative}
    \end{equation}
    where $v\in H^1(\Omega)$ is the unique weak solution to
    \begin{equation*}\left\{
    \begin{aligned}
        -\Delta v+qv &=-hu&&\mathrm{in}~\Omega,\\
        v&=0&&\mathrm{on}~\partial\Omega
    \end{aligned}\right.
\end{equation*}
    and $u\in H^1(\Omega)$ is the unique weak solution to
    \begin{equation}\left\{
    \begin{aligned}
        -\Delta u+qu &=0&&\mathrm{in}~\Omega,\\
        u&=f&&\mathrm{on}~\partial\Omega.
    \end{aligned}\right.
    \label{eq Schrodinger in appendix}
\end{equation}
Moreover, for every $q\in A$, the mapping $\Psi'[q]\colon L^\infty(\Omega)\to \calB(H^{1/2}(\partial\Omega),H^{-1/2}(\partial\Omega))$ is injective.
\end{lemma}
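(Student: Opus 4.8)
The plan is to establish the two assertions in turn: first identify $\Psi'[q](h)$ by a standard first-order perturbation argument, then deduce injectivity from the bilinear identity $\langle\Psi'[q](h)f_1,f_2\rangle=\int_\Omega h\,u_1u_2$ combined with the classical density of products of solutions of the Schr\"odinger equation.

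For the derivative, I would first note that $A$ is open in $L^\infty(\Omega)$: the condition that $0$ is not an eigenvalue of $-\Delta+q$ means that $-\Delta+q\colon H^1_0(\Omega)\to H^{-1}(\Omega)$ is an isomorphism, and this is stable under small $L^\infty$-perturbations of $q$, so $\Psi$ is defined on a neighbourhood of $q$. Fix $f\in H^{1/2}(\partial\Omega)$, let $u=u_q(f)$ solve \eqref{eq Schrodinger in appendix}, and for small $h$ let $\tilde u=u_{q+h}(f)$. Then $w\eqdef\tilde u-u$ solves $-\Delta w+(q+h)w=-hu$ in $\Omega$ with $w=0$ on $\partial\Omega$. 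I would split $w=v+r$, where $v$ is the weak solution of $-\Delta v+qv=-hu$ with $v=0$ on $\partial\Omega$ (exactly the function in \eqref{equation frechet derivative}) and $r=w-v$ solves $-\Delta r+(q+h)r=-hv$, $r=0$ on $\partial\Omega$. Elliptic estimates, uniform for $q$ ranging in a small neighbourhood in $A$, give $\|u\|_{H^1}\lesssim\|f\|_{H^{1/2}}$, $\|v\|_{H^1}\lesssim\|h\|_{L^\infty}\|f\|_{H^{1/2}}$ and hence $\|r\|_{H^1}\lesssim\|h\|_{L^\infty}\|v\|_{L^2}\lesssim\|h\|_{L^\infty}^2\|f\|_{H^{1/2}}$. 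Using the normal-trace bound $\|\restriction{\partial_\nu z}{\partial\Omega}\|_{H^{-1/2}(\partial\Omega)}\lesssim\|z\|_{H^1(\Omega)}+\|\Delta z\|_{L^2(\Omega)}$ for $z\in H^1(\Omega)$ with $\Delta z\in L^2(\Omega)$, applied to $z=r$, gives $\|\restriction{\partial_\nu r}{\partial\Omega}\|_{H^{-1/2}}\lesssim\|h\|_{L^\infty}^2\|f\|_{H^{1/2}}$. Since $\Lambda_{q+h}f-\Lambda_q f=\restriction{\partial_\nu w}{\partial\Omega}=\restriction{\partial_\nu v}{\partial\Omega}+\restriction{\partial_\nu r}{\partial\Omega}$, and $h\mapsto v$ is linear and bounded for fixed $f$, the assignment $h\mapsto(f\mapsto\restriction{\partial_\nu v}{\partial\Omega})$ defines a bounded linear operator $L^\infty(\Omega)\to\calB(H^{1/2}(\partial\Omega),H^{-1/2}(\partial\Omega))$ with $\|\Lambda_{q+h}-\Lambda_q-\Psi'[q](h)\|=\mathcal{O}(\|h\|_{L^\infty}^2)$, identifying it as the Fr\'echet derivative \eqref{equation frechet derivative}.

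For injectivity, the key step is the bilinear identity. Given $f_1,f_2\in H^{1/2}(\partial\Omega)$, set $u_j\eqdef u_q(f_j)$ and let $v$ solve $-\Delta v+qv=-hu_1$, $v=0$ on $\partial\Omega$. Applying Green's second identity to $v$ and $u_2$ (legitimate since both are in $H^1(\Omega)$ with Laplacians in $L^2(\Omega)$, using the integration-by-parts definition of the normal trace), and using $\Delta v=qv+hu_1$, $\Delta u_2=qu_2$ and $\restriction{v}{\partial\Omega}=0$, one obtains
\begin{equation*}
    \langle\Psi'[q](h)f_1,f_2\rangle=\langle\restriction{\partial_\nu v}{\partial\Omega},\restriction{u_2}{\partial\Omega}\rangle=\int_\Omega\bigl(u_2\,\Delta v-v\,\Delta u_2\bigr)\,dx=\int_\Omega h\,u_1u_2\,dx.
\end{equation*}
Hence $\Psi'[q](h)=0$ forces $\int_\Omega h\,u_1u_2=0$ for every pair of weak solutions $u_1,u_2$ of $-\Delta u_j+qu_j=0$ in $\Omega$. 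I would then invoke the classical density of such products: choosing complex geometrical optics solutions $u_j=e^{\zeta_j\cdot x}(1+\psi_{\zeta_j})$ with $\zeta_j\in\CC^d$, $\zeta_j\cdot\zeta_j=0$ and $\|\psi_{\zeta_j}\|_{L^2(\Omega)}\to0$ as $|\zeta_j|\to\infty$, one fixes $\xi\in\RR^d$ and selects $\zeta_1,\zeta_2$ with $\zeta_1+\zeta_2=i\xi$ and $|\zeta_j|\to\infty$, so that $u_1u_2\to e^{i\xi\cdot x}$ in $L^2(\Omega)$. Passing to the limit in $\int_\Omega h\,u_1u_2=0$, with $h$ extended by zero outside $\Omega$, yields $\widehat{h\mathbbm{1}_\Omega}(\xi)=0$ for all $\xi\in\RR^d$, whence $h=0$.

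The main obstacle is the injectivity part, specifically the density-of-products step: it relies on the existence of complex geometrical optics solutions for $-\Delta+q$, which is Sylvester--Uhlmann's construction for $d\ge3$ and needs a separate (e.g.\ $\bar\partial$- or Bukhgeim-type) argument for $d=2$. Since these are precisely the ingredients underlying the identifiability results already cited in the paper, this step can be quoted rather than reproved. The remaining work -- the uniform elliptic estimates as $q$ varies in a neighbourhood in $A$, the normal-trace bounds at $H^{-1/2}$ regularity, and the justification of Green's identity -- is routine.
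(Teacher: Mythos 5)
Your proof is correct, and the injectivity part is essentially identical to the paper's: both derive the polarization identity $\langle\Psi'[q](h)f_1,f_2\rangle=\int_\Omega h\,u_1u_2$ and invoke the density (via CGO solutions) of products of Schr\"odinger solutions, as in Sylvester--Uhlmann and Bukhgeim. The derivative computation, however, takes a genuinely different route. You decompose $\tilde u-u=v+r$, observe that $r$ solves $-\Delta r+(q+h)r=-hv$ with zero trace, bound $\|r\|_{H^1}$ and $\|\Delta r\|_{L^2}$ by $\mathcal{O}(\|h\|_{L^\infty}^2\|f\|_{H^{1/2}})$, and then feed this into the normal-trace estimate $\|\restriction{\partial_\nu z}{\partial\Omega}\|_{H^{-1/2}}\lesssim\|z\|_{H^1}+\|\Delta z\|_{L^2}$. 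The paper never introduces the remainder $r$ explicitly: it directly computes, via integration by parts against $e_\varphi$, that $\langle\restriction{\partial_\nu v}{\partial\Omega},\varphi\rangle=\int_\Omega hue_\varphi$ and $\langle\restriction{\partial_\nu(u'-u)}{\partial\Omega},\varphi\rangle=\int_\Omega hu'e_\varphi$, subtracts to get $\int_\Omega h(u'-u)e_\varphi$, and estimates this by $\|h\|_{L^\infty}^2\|f\|_{H^{1/2}}\|\varphi\|_{H^{1/2}}$ using $\|u'-u\|_{H^1}\lesssim\|h\|_{L^\infty}\|f\|_{H^{1/2}}$. This self-adjointness-flavoured duality trick lets the paper avoid invoking a separate trace theorem, and it re-uses the very identity that then drives the injectivity argument; your version is the more standard perturbation-theoretic decomposition and is equally valid, at the cost of one extra (but routine) trace estimate. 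Your explicit remark that $A$ is open in $L^\infty(\Omega)$ is also a useful clarification that the paper leaves implicit.
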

\begin{proof}
    Let $q\in A$ and $h\in L^\infty(\Omega)$ be such that $h+q\in A$. We denote by $u$ (respectively $u'$) the solution to \Cref{eq Schrodinger in appendix} associated to the potential $q$ (respectively $q+h$). As a consequence, it holds that
    \begin{equation*}
       -\Delta (u'-u)+q(u'-u)=-hu'. 
    \end{equation*}
    Thus, since $q,q+h\in A$, we have
     \begin{equation*}
         \|u'\|_{\HU}\lesssim\|f\|_{H^{1/2}(\partial\Omega)},
     \end{equation*}
     and
     \begin{equation*}
         \|u'-u\|_{\HU}\lesssim \|hu'\|_{\LD}.
     \end{equation*}
     As a result, we get
    \begin{equation}\label{inequality for frechet derivative lemma}
        \|u'-u\|_{H^1(\Omega)}\lesssim \|h\|_{L^\infty(\Omega)}\|f\|_{H^{1/2}(\partial\Omega)}.
    \end{equation}
    Let now $\varphi\in H^{1/2}(\partial\Omega)$. Integrating by parts yields
    \begin{equation*}
        \langle \restriction{\partial_\nu v}{\partial\Omega},\varphi\rangle=\int_\Omega\Delta v e_\varphi -\int_\Omega v\Delta e_\varphi=\int_\Omega h u e_\varphi.
    \end{equation*}
    Analogously, we get
    \begin{equation*}
        \langle \restriction{\partial_\nu u'}{\partial\Omega}-\restriction{\partial_\nu u}{\partial\Omega},\varphi\rangle=\int_\Omega\Delta(u'-u) e_\varphi -\int_\Omega (u'-u)\Delta e_\varphi=\int_\Omega h u' e_\varphi.
    \end{equation*}
    Therefore,
    \begin{equation*}
        \langle \restriction{\partial_\nu u'}{\partial\Omega}-\restriction{\partial_\nu u}{\partial\Omega}-\restriction{\partial_\nu v}{\partial\Omega},\varphi\rangle=\int_\Omega h (u'-u) e_\varphi,
    \end{equation*}
    that, by \Cref{inequality for frechet derivative lemma}, yields
    \begin{equation*}
        \|\restriction{\partial_\nu u'}{\partial\Omega}-\restriction{\partial_\nu u}{\partial\Omega}-\restriction{\partial_\nu v}{\partial\Omega}\|_{H^{-1/2}(\partial\Omega)}\lesssim \|h\|^2_{L^\infty(\Omega)}\|f\|_{H^{1/2}(\partial\Omega)},
    \end{equation*}
    where we used $\|e_\varphi\|_{\HU}\lesssim \|\varphi\|_{H^{1/2}(\partial\Omega)}$.
    This latter ensures $\Psi'[q](h)f=\restriction{\partial_\nu v}{\partial\Omega}$ for every $q\in A$ and $h\in L^\infty(\Omega)$.
    Suppose now that $\Psi'[q](h)f=0$ for every $f\in H^{1/2}(\partial\Omega)$ and let $\varphi\in H^{1/2}(\Omega)$. Integrating by parts yields
    \begin{equation*}
        0=\langle \restriction{\partial_\nu v}{\partial\Omega},\varphi\rangle=\int_\Omega \Delta v e_\varphi -\int_\Omega v\Delta e_\varphi=\int_\Omega h u e_\varphi.
    \end{equation*}
    Using the completeness of the family of products $\{ue_\varphi: f,\varphi\in H^{1/2}(\partial\Omega)\}$ (see \citet{sylvester1987global} for $d\ge3$ and \citet{Bukhgeim+2008+19+33} for $d=2$), we can conclude that $h=0$.
\end{proof}

\begin{lemma}\label{lemma compactness F'}
    The mapping $\Psi'[q](h)$ as in \Cref{equation frechet derivative} is a compact operator for every $q\in A$ and $h\in L^\infty(\Omega)$.
\end{lemma}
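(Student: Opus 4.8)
The plan is to exhibit $\Psi'[q](h)$ as a composition of bounded linear maps, one of which is compact by the Rellich--Kondrachov theorem. Concretely, I would factor
\[
\Psi'[q](h) = R\circ M_h\circ S,
\]
where $S\colon H^{1/2}(\partial\Omega)\to H^1(\Omega)$ is the solution operator $f\mapsto u$ for \eqref{eq Schrodinger in appendix}, $M_h\colon H^1(\Omega)\to L^2(\Omega)$ is multiplication by $h$, and $R\colon L^2(\Omega)\to H^{-1/2}(\partial\Omega)$ sends $g$ to $\restriction{\partial_\nu v}{\partial\Omega}$, with $v\in H^1(\Omega)$ the unique weak solution of $-\Delta v+qv=-g$, $v=0$ on $\partial\Omega$. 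That this factorization reproduces the definition \eqref{equation frechet derivative} is immediate upon taking $g=hu$; note that only $q\in A$ is used, since the potential appearing in both PDEs defining $\Psi'[q](h)$ is $q$.

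The steps, in order, would be: first, boundedness of $S$, which follows from $q\in A$ and the Lax--Milgram/Fredholm theory already invoked in the paper, giving $\|u\|_{H^1(\Omega)}\lesssim\|f\|_{H^{1/2}(\partial\Omega)}$. Second, boundedness of $R$: if $v\in H^1(\Omega)$, $v|_{\partial\Omega}=0$, solves $-\Delta v+qv=-g$ with $g\in L^2(\Omega)$, then $\Delta v=qv+g\in L^2(\Omega)$, so the (weak) co-normal derivative $\restriction{\partial_\nu v}{\partial\Omega}\in H^{-1/2}(\partial\Omega)$ is well defined via Green's formula and satisfies $\|\restriction{\partial_\nu v}{\partial\Omega}\|_{H^{-1/2}(\partial\Omega)}\lesssim\|v\|_{H^1(\Omega)}+\|\Delta v\|_{L^2(\Omega)}\lesssim\|g\|_{L^2(\Omega)}$, again using $q\in A$ for the last estimate. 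Third, and this is the crucial point, compactness of $M_h$: since $h\in L^\infty(\Omega)$, multiplication by $h$ is bounded $L^2(\Omega)\to L^2(\Omega)$, and composing it with the compact embedding $H^1(\Omega)\hookrightarrow L^2(\Omega)$ (valid since $\Omega$ is bounded with Lipschitz boundary) shows $M_h\colon H^1(\Omega)\to L^2(\Omega)$ is compact. Fourth, conclude: a composition of a compact operator with bounded operators is compact, hence $\Psi'[q](h)=R\circ M_h\circ S$ is compact.

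I do not anticipate a genuine obstacle; the argument is essentially the observation that the Fréchet derivative gains a power of compactness over the forward map through the Rellich embedding. The only step requiring a little care is the rigorous definition and continuity of the normal-trace operator $R$ on a merely Lipschitz domain, which must be handled through the integration-by-parts (weak co-normal derivative) characterization rather than through $H^2$ elliptic regularity; this is standard, and if one prefers, in the $C^2$ setting of \Cref{sec_internal_measurements} it could alternatively be obtained from $H^2$ regularity and the boundedness of the trace $H^2(\Omega)\to H^{1/2}(\partial\Omega)$ applied to $\nabla v$.
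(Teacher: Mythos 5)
Your proposal is correct and relies on the same key compactness mechanism as the paper, namely the Rellich--Kondrachov embedding $H^1(\Omega)\hookrightarrow L^2(\Omega)$. The paper runs the argument sequentially (extract a subsequence convergent in $L^2(\Omega)$, then show the corresponding normal derivatives converge by integrating by parts against the test solutions $e_\varphi$), while your factorization $\Psi'[q](h)=R\circ M_h\circ S$ is the same argument made explicit, with the boundedness of the weak normal-trace operator $R$ playing the role of that final integration by parts.
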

\begin{proof}
    Let $\{f_n\}_{n\in\NN}$ be a bounded sequence in $H^{1/2}(\partial\Omega)$ and consider $u_n\in\HU$ the solution to $-\Delta u_n+qu_n=0$ such that $\restriction{u_n}{\partial\Omega}=f_n$. In particular, it holds that
    \begin{equation*}
        \|u_n\|_{\HU}\lesssim \|f_n\|_{H^{1/2}(\partial\Omega)}\leq M,\qquad \text{for some}~M>0.
    \end{equation*}
    Therefore, due to the Rellich-Kondrachov theorem, we have that, up to extracting a subsequence, there exists $u\in L^2(\Omega)$ such that $u_n\to u$ in $L^2(\Omega)$ as $n\to+\infty$. Let $v_n,v \in \HU$ be the solutions to
    \begin{equation*}\left\{
    \begin{aligned}
        -\Delta v_n+qv_n &=-hu_n&&\mathrm{in}~\Omega,\\
        v&=0&&\mathrm{on}~\partial\Omega
    \end{aligned}\right.
\end{equation*}
and 
\begin{equation*}\left\{
    \begin{aligned}
        -\Delta v+qv &=-hu&&\mathrm{in}~\Omega,\\
        v&=0&&\mathrm{on}~\partial\Omega.
    \end{aligned}\right.
\end{equation*}
We now conclude the proof by showing $\|\restriction{\partial_\nu v_n}{\partial\Omega}-\restriction{\partial_\nu v}{\partial\Omega}\|_{H^{-1/2}(\partial\Omega)}\to0$ as $n\to+\infty$. Let $\varphi\in H^{1/2}(\partial\Omega)$,
\begin{equation*}
\begin{aligned}
    \langle\restriction{\partial_\nu v_n}{\partial\Omega}-\restriction{\partial_\nu v}{\partial\Omega},\varphi\rangle 
    &=\int_\Omega \Delta (v_n-v) e_\varphi +\int_\Omega \nabla (v_n-v) \nabla e_\varphi\\
    &=\int_\Omega q(v_n-v)e_\varphi +\int_\Omega h(u_n-u)e_\varphi -\int_\Omega (v_n-v)\Delta e_\varphi\\
    &=\int_\Omega h(u_n-u)e_\varphi.
\end{aligned}
\end{equation*}
Thus, we get
\begin{equation*}
    \|\restriction{\partial_\nu v_n}{\partial\Omega}-\restriction{\partial_\nu v}{\partial\Omega}\|_{H^{-1/2}(\partial\Omega)}\lesssim \|h\|_{L^\infty(\Omega)}\|u_n-u\|_{L^2(\Omega)},
\end{equation*}
where we used $\|e_\varphi\|_{\HU}\lesssim \|\varphi\|_{H^{1/2}(\partial\Omega)}$. Since $u_n\to u$ in $L^2(\Omega)$ as $n\to+\infty$, we conclude the proof.
\end{proof}
\end{document}